\def\titlerunning#1{\gdef\titrun{#1}}
\def\author#1{\gdef\autrun{\def\and{\unskip, }#1}\gdef\@author{#1}}
\def\address#1{{\def\and{\\\hspace*{18pt}}\renewcommand{\thefootnote}{}%
\footnote {#1}}%
\markboth{\autrun}{\titrun}}
\def\email#1{e-mail: #1}
\def\subjclass#1{{\renewcommand{\thefootnote}{}%
\footnote{\emph{Mathematics Subject Classification (2010):} #1}}}
\newtheorem{thm}{Theorem}[section]
\theoremstyle{definition}
\newtheorem{defin}[thm]{Definition}
\numberwithin{equation}{section}
\newcommand{\C}{{\mathbb C}}
\newcommand{\cone}{\operatorname{cone}}
\newcommand{\const}{\operatorname{const.}}
\newcommand{\cyl}{\operatorname{cyl}}
\newcommand{\Hess}{\operatorname{Hess}}
\newcommand{\HH}{\operatorname{H}}
\newcommand{\Id}{\operatorname{Id}}
\newcommand{\Image}{\operatorname{Im}}
\newcommand{\Int}{\operatorname{int}}
\newcommand{\R}{{\mathbb R}}
\newcommand{\re}{\operatorname{Re}}
\newcommand{\Ric}{\operatorname{Ric}}
\newcommand{\Rm}{\operatorname{Rm}}
\newcommand{\sol}{\operatorname{sol}}
\newcommand{\supp}{\operatorname{supp}}
\newcommand{\Tr}{\operatorname{Tr}}
\newcommand{\Z}{{\mathbb Z}}
\numberwithin{equation}{section}
\theoremstyle{plain}
\newtheorem{lemma}[equation]{Lemma}
\newtheorem{proposition}[equation]{Proposition}
\newtheorem{corollary}[equation]{Corollary}
\theoremstyle{definition}
\theoremstyle{definition}
\newtheorem{example}[equation]{Example}
\theoremstyle{definition}
\newtheorem{remark}[equation]{Remark}
\def\<{\langle}
\def\>{\rangle}
\def\({\left(}
\def\){\right)}
\def\p{\partial}
\begin{document}


\baselineskip=17pt


\titlerunning{Ricci flow on quasiprojective manifolds II}

\title{Ricci flow on quasiprojective manifolds II}

\author{John Lott
\and 
Zhou Zhang}

\date{}

\maketitle

\address{John Lott: Department of Mathematics,
University of California - Berkeley,
Berkeley, CA  94720-3840,
USA; \email{lott@berkeley.edu}
\and
Zhou Zhang: School of Mathematics and Statistics,
The University of Sydney,
NSW 2006, Australia; \email{zhangou@maths.usyd.edu.au}}

\subjclass{Primary 53C44; Secondary 32Q15}


\begin{abstract}
We study the Ricci flow on complete K\"ahler metrics that live on
the complement of a divisor in a compact complex manifold. In earlier work,
we considered finite-volume metrics which, at spatial infinity, are
transversely hyperbolic.  In the present paper we consider 
three different types of spatial asymptotics: cylindrical, bulging and
conical.  We show that in each case, the asymptotics are preserved
by the K\"ahler-Ricci flow.  We address long-time existence,
parabolic blowdown limits
and the role of the K\"ahler-Ricci flow on the divisor.
\end{abstract}





\section{Introduction} \label{section1}

Let $\overline{X}$ be a compact complex manifold of complex
dimension $n$. Let $D$ be a divisor
in $\overline{X}$ with normal crossings.
In a series of papers, Tian and Yau gave sufficient conditions for the
quasiprojective manifold
$X = \overline{X} - D$ to admit a complete K\"ahler-Einstein metric.
The papers differed in the kind of spatial asymptotics that were
considered.  The paper \cite{tian-yau} gave sufficient conditions for
$X$ to admit a finite-volume K\"ahler-Einstein metric with negative
Ricci tensor, having cuspidal asymptotics. 
The papers \cite{tian-yau2} and \cite{tian-yau3}
dealt with Ricci-flat K\"ahler metrics. The paper
\cite{tian-yau2} considered two types of spatial asymptotics,
which we call cylindrical and bulging.  The paper
\cite{tian-yau3} considered asymptotically conical metrics.

In \cite{lz-duke}, we looked at the K\"ahler-Ricci flow on
quasiprojective manifolds
with cuspidal asymptotics. That is, the initial
K\"ahler metric on $X$ was assumed to be 
complete, finite-volume and asymptotic to a hyperbolic cusp in directions 
transverse to $D$.
Unlike in \cite{tian-yau}, no assumptions were made on the 
divisor class, since we were not necessarily looking for
K\"ahler-Einstein metrics. We defined two flavors of spatial asymptotics:
``standard'' spatial asymptotics, which describes the leading
behavior of the K\"ahler form at spatial infinity, and
``superstandard'' spatial asymptotics, in which the leading behavior
of the K\"ahler
potential is also prescribed.
We showed that standard and superstandard spatial asymptotics
are both preserved by the K\"ahler-Ricci flow (taking into account the
K\"ahler-Ricci flow on $D$).  We 
gave a formula for the first singularity time, if there is one,
in terms of the cohomology of $\overline{X}$.

In the present paper we look at the technically more
challenging case of the K\"ahler-Ricci flow on quasiprojective
manifolds with spatial asymptotics similar to those considered
in \cite{tian-yau2} and \cite{tian-yau3}. As in
our earlier paper, the goal is to define classes of spatial
asymptotics that are as general as possible while being preserved under the
K\"ahler-Ricci flow, and about which one can say something nontrivial.
We consider ``cylindrical'', ``bulging'' and ``conical''
spatial asymptotics.  In each case, there are notions of
``standard'' asymptotics and ``superstandard'' asymptotics.

\begin{thm} \label{intro1}
If the initial K\"ahler metric has  (cylindrical, bulging
or conical) (standard or superstandard)
spatial asymptotics then so do the evolving K\"ahler metrics 
of the K\"ahler-Ricci flow.
With superstandard asymptotics,
\begin{enumerate} [\upshape (i)]
\item 
In the cylindrical or bulging case, if $K_{\overline{X}} + D \ge 0$ in 
$H^{(1,1)}(\overline{X}; \R)$ then the K\"ahler-Ricci flow
exists for all positive time.
\item
In the conical case, if $K_{\overline{X}} + (n+1) D \ge 0$ in 
$H^{(1,1)}(\overline{X}; \R)$ then the K\"ahler-Ricci flow
exists for all positive time.
\end{enumerate}
\end{thm}

One theme of this paper is the relationship between the
K\"ahler-Ricci flow on $X$ and the K\"ahler-Ricci flow on the
divisor $D$. The result depends on what kind of spatial asymptotics
we are considering, so we address them separately. 
\subsection{Cylindrical spatial asymptotics} (c.f. \cite[Section 5]{tian-yau2}) 
In this case, each
component $\{D_i\}_{i=1}^k$ of $D$ is assumed to have a trivial normal
bundle.  More precisely, we assume that there are holomorphic fiberings
$f_i : \overline{X} \rightarrow C_i$ from $\overline{X}$ 
to complex curves so that
$D_i = f^{-1}(s_i)$ for some $s_i \in C_i$.
If $z_i$ is a local coordinate for $C_i$ near $s_i$, let 
$z_i$ also denote its pullback to $\overline{X}$.
If $I = (i_1, \ldots, i_m), 1 \le i_1 < \ldots < i_m \le k$  is an
ordered set, put 
$D_I = D_{i_1} \cap \ldots \cap D_{i_m}$. Let $D_I^{\Int}$ be the
smooth manifold consisting of
points in $D_I$ that do not lie in lower-dimensional strata.
Let $\omega_{D_I^{\Int}}$ be a complete K\"ahler metric on
$D_I^{\Int}$.

Given $\overline{x} \in D$, let $I$ be such that
$\overline{x} \in D_I$ but
$\overline{x} \notin D_i$ for $i \notin I$.
If a K\"ahler metric $\omega_X$ on $X$
has cylindrical standard spatial asymptotics, associated to positive numbers
$\{ c_i \}_{i=1}^k$ and the metrics 
$\{\omega_{D_I^{\Int}} \}$, then for all $\overline{x} \in D$, the
asymptotics of $\omega_X$ ``near'' $\overline{x}$ are
\begin{equation}
\omega_X \sim \sum_{i \in I} 2c_i \sqrt{-1} \frac{dz^i \wedge
d\overline{z}^i}{|z_i|^2}
+ \omega_{D_I^{\Int}}.
\end{equation}
That is, in the way of approaching spatial infinity specified by 
$\overline{x}$,
the metric $\omega_X$ looks like a product of Euclidean cylinders with
$\omega_{D_I^{\Int}}$. The K\"ahler-Ricci flow on the divisor enters into
the K\"ahler-Ricci flow on $X$ in the following way.

\begin{thm} \label{intro2}
Suppose that the initial metric 
$\omega_X(0)$ has cylindrical standard spatial asymptotics associated to
$\{c_i\}_{i=1}^k$ and $\{\omega_{D_I^{\Int}}(0) \}$.
Then under the K\"ahler-Ricci flow on $X$, the metric
$\omega_X(t)$ has standard spatial asymptotics associated to
$\{c_i\}_{i=1}^k$ and $\{\omega_{D_I^{\Int}}(t) \}$, where
$\omega_{D_I^{\Int}}(\cdot)$ is the K\"ahler-Ricci flow on
$D_I^{\Int}$ starting from $\omega_{D_I^{\Int}}(0)$.
\end{thm}

\subsection{ Bulging spatial asymptotics} (c.f. \cite[Section 4]{tian-yau2})
Suppose that $D$ is a smooth divisor, connected for simplicity. 
Let $x_0$ be a basepoint
in $X = \overline{X} - D$. Suppose that $n > 1$
and let $\omega_D$ be a K\"ahler metric on
$D$. If $X$ has bulging standard spatial
asymptotics, associated to $\omega_D$, then a sphere of large distance
$R$ from $x_0$ is the total space of a circle bundle over $D$.
As $R \rightarrow \infty$, the lengths of the circle fibers are 
$O \left( R^{\: - \: \frac{n-1}{n+1}} \right)$.
The metric on the base of the bundle is comparable to
$R^{\frac{2}{n+1}} \omega_D$. The sectional curvatures are
$O \left( R^{\: - \: \frac{2}{n+1}} \right)$.

If $\omega_X(0)$ has bulging standard spatial asymptotics
associated to $\omega_D(0)$, and
$\omega_X(\cdot)$ is the ensuing K\"ahler-Ricci flow, then it
turns out that $\omega_X(t)$ has  
bulging standard spatial asymnptotics
associated to $\omega_D(0)$. That is, the divisor flow does not
enter into the finite-time spatial asymptotics on $X$.
The proof of this uses results from Appendix \ref{powerapp}
about the preservation, under K\"ahler-Ricci flow, of a power law
decay in the curvature. 
Intuitively, in order to see a significant change in
the geometry at a point $x \in X$, there must be an elapsed time
comparable to $d_0(x,x_0)^{\frac{2}{n+1}}$, where $d_0$ is
the time-zero distance. 

This suggests 
that to see the divisor flow, one should take
a sequence of time-zero points $\{x_i\}_{i=1}^\infty$ in $X$, going to 
spatial infinity, and perform a parabolic rescaling around
$(x_i, 0)$ by a factor of $d_0(x_i,x_0)^{- \: \frac{2}{n+1}}$, i.e.
\begin{equation} \label{intro1.4}
\omega_i(t) = d_0(x_i,x_0)^{- \: \frac{2}{n+1}} \:
\omega_X \left( d_0(x_i,x_0)^{\frac{2}{n+1}} t \right).
\end{equation}
The ensuing pointed limit should see the divisor flow.

There is a technical issue in taking such a blowdown limit.
Namely, one needs uniform sectional curvature bounds on
forward parabolic balls
in the rescaled metrics.  Such curvature bounds would follow from
Perelman's pseudolocality result
\cite{Lu,Perelman}, if it were applicable.  
Unfortunately, pseudolocality does not apply in this case,
because of the shrinking circle fibers.

To get uniform sectional curvature bounds, we 
instead use a remarkable feature of
K\"ahler-Ricci flow: a local biLipschitz bound implies a local
curvature bound, independent of the elapsed time.
The proof of this statement is given in Appendix \ref{localapp},
building on work of Sherman-Weinkove \cite{sherman-weinkove}.
To get biLipschitz estimates, it suffices to have a Ricci curvature
bound.

\begin{thm} \label{intro3}
Suppose that the initial metric $\omega_X(0)$ has bulging standard
spatial asymptotics associated to $\omega_D(0)$. Let
$(X, \omega_X(\cdot))$ be the K\"ahler-Ricci flow starting from
$\omega_X(0)$. \\
(a) For any time $t \ge 0$, the K\"ahler metric $\omega_X(t)$ also
has bulging standard
spatial asymptotics associated to $\omega_D(0)$. \\
(b) Suppose that the flow $(X, \omega_X(\cdot))$ exists
for all positive time. Given $A < \infty$, 
suppose that $|\Ric(x,t)| = 
O \left( d_0(x,x_0)^{\: - \: \frac{2}{n+1}} \right)$
on the spacetime region $\left\{ (x,t) \: : \: t \: \le \: A \: 
d_0(x,x_0)^{\frac{2}{n+1}} \right\}$.
Let $\{x_i\}_{i=1}^\infty$ be a sequence
of points in $X$ going to spatial infinity.  Define $\omega_i(\cdot)$
by (\ref{intro1.4}). Then the pointed Ricci flow limit 
$\lim_{i \rightarrow \infty}
\left( X, (x_i, 0), \omega_i(\cdot) \right) = 
\left( X_\infty, (x_\infty, 0), \omega_\infty(\cdot) \right)$ 
exists on the time interval $[0,A]$ and is given by
\begin{equation} \label{intro1.6}
\omega_\infty(t) = \frac12 \left( \frac{n+1}{n} \right)^2
\left( \sqrt{-1} du \wedge d\overline{u} + n \omega_D(t) \right).
\end{equation} 
\end{thm}
Here $X_\infty$ is interpreted as an \'etale groupoid of
complex dimension $n$ (because of the collapsing
circle fibers) with unit space $\C \times D$, so $u$ lies in $\C$.
Alternatively, one can express the convergence in terms of local
covers; see Proposition \ref{5.8}.

\subsection{Conical asymptotics} (c.f. \cite{tian-yau3}) Suppose that $D$ is a
smooth divisor, connected for simplicity. Let $\omega_D$ be a
K\"ahler metric on $D$.  If $X$ has conical standard spatial
asymptotics, associated to $\omega_D$, then it has quadratic curvature decay. 
The asymptotic cone of $X$ is a metric cone
$(CY, \omega_{CY})$, 
where $Y$ is the total space of a circle bundle over $D$,
the latter having metric $\omega_D$. More precisely, the conical metric
$\omega_{CY}$ is defined on $CY - \star_{CY}$, where
$\star_{CY} \in CY$ is the vertex.

If $\omega_X(0)$ has conical standard spatial asymptotics, associated
to $\omega_D$, then $\omega_X(t)$ also has conical standard spatial 
asymptotics associated to $\omega_D$. To go beyond this, it is
natural to look at parabolic blowdowns. Suppose
that the flow $(X, \omega_X(\cdot))$ exists for all positive time.
Using pseudolocality, we show that one can extract a blowdown 
Ricci flow limit
$\omega_\infty(\cdot)$ that exists on
$\left\{ (x, t) \in (CY - \star_{CY}) \times [0, \infty) \: : \:
0 \le t \le \epsilon d_{CY}(x, \star_{CY})^2 \right\}$, for some
$\epsilon > 0$. (One can also form such blowdown limits in the
nonK\"ahler case.) Its initial metric $\omega_{\infty}(0)$ is
the conical metric $\omega_{CY}$.

One may hope to see dynamics on $D$ entering into the asymptotic cone
of the blowdown limit. 
However, one finds that the asymptotic cone of a time slice 
$\left( CY - \star_{CY}, \omega_{\infty}(t) \right)$ of the blowdown limit 
is still the asymptotic cone $(CY, \omega_{CY})$ of $(X, \omega_X(0))$,
constructed using the initial metric $\omega_D$ on $D$.
In this conical setting, the dynamical object associated
to $D$ is no longer a K\"ahler-Ricci flow on $D$.
Rather, we show that there is a formal gradient expanding soliton
$\omega_{\sol}(\cdot)$ on $CY - \star_{CY}$, which is
uniquely determined by $\omega_D$. The time-one slice of this soliton
takes the form
\begin{equation}
\omega_{\sol}(1) = \omega_{CY} - \Ric(\omega_{CY}) + \sqrt{-1}
\partial \overline{\partial} \sum_{k > 0} u_{(k)}.
\end{equation}
The summation is a formal power series in a
holomorphic coordinate $z$ transverse to $D$ 
(and its complex conjugate) or, equivalently,
in inverse powers of the distance to $\star_{CY}$. 
The vector field associated to the soliton is the radial vector field
on $CY$.

\begin{thm} \label{intro4}
Suppose that the initial metric $\omega_X(0)$ has conical standard
spatial asymptotics associated to $\omega_D$. Let  $(X, \omega_X(\cdot))$
be the K\"ahler-Ricci flow starting from $\omega_X(0)$. \\
(a) For any time $t \ge 0$, the K\"ahler metric $\omega_X(t)$ has
conical standard spatial asymptotics associated to $\omega_D$. \\ 
(b) Suppose that the flow $(X, \omega_X(\cdot))$ exists for all positive
time. Let $\omega_\infty(\cdot)$ be a parabolic blowdown limit.
Then any asymptotic expansion of $\omega_\infty(\cdot)$, in
powers of $z$ and $\overline{z}$, equals the gradient expanding soliton 
$\omega_{\sol}(\cdot)$ associated to $\omega_D$.   
\end{thm}

\subsection{Structure of the paper}

In Section \ref{section2} we recall some results from
\cite{lz-duke}. In Section \ref{section3} we define
cylindrical standard spatial asymptotics and
cylindrical superstandard spatial asymptotics. We show that
they are preserved by the K\"ahler-Ricci flow, when taking
into account the flow on the divisor. We give a formula for
the first singularity time, if there is one.

In Section \ref{section4} we define
bulging standard spatial asymptotics and
bulging superstandard spatial asymptotics. We show that
they are preserved by the K\"ahler-Ricci flow.
We give a formula for
the first singularity time, if there is one.
With a decay assumption on the Ricci curvature, we
show that there is a parabolic blowdown limit based
on a sequence of points on the time-zero slice which
go to spatial infinity.  We prove that the blowdown limit
is a product flow in which the divisor flow appears.

In Section \ref{section5} we define
conical standard spatial asymptotics and
conical superstandard spatial asymptotics. We show that
they are preserved by the K\"ahler-Ricci flow.
We give a formula for
the first singularity time, if there is one.
We show that one can take parabolic blowdowns of asymptotically
conical Ricci flows. We construct the formal asymptotic
expansion of a gradient expanding soliton on the 
asymptotic cone. We show that any formal asymptotic expansion
of the parabolic blowdown equals the expanding soliton.

There are two appendices which may be of independent interest.
In Appendix \ref{localapp} we prove a local curvature estimate
in K\"ahler-Ricci flow, showing that a biLipschitz bound implies
a curvature bound, independent of the elapsed time.
The proof is along the lines of Sherman-Weinkove \cite{sherman-weinkove}, with
some modifications.  The result of Appendix \ref{localapp}
is used in Section \ref{section4} and Appendix \ref{powerapp}.

In Appendix \ref{powerapp} we discuss the preservation, under
Ricci flow, of a power law decay of the curvature tensor.  For general
Ricci flow, we show that such a decay is preserved whenever the
initial geometry has a reasonable end structure.  More precisely, we need a
smooth distance-like function $\phi$ so that 
$\frac{\Hess(\phi)}{\phi}$ is bounded below. The proof is along
the lines of Dai-Ma \cite{Dai-Ma}. Using Appendix \ref{localapp}, we
show that a power law decay of the curvature is preserved under 
K\"ahler-Ricci flow, provided that the first derivatives of the
initial curvature tensor also have the right decay.
The results of Appendix \ref{powerapp} are used in
Sections \ref{section4} and \ref{section5}.

More detailed descriptions are given at the beginnings of the
sections.

\subsection{Further directions}

One can ask about more refined asymptotics for the K\"ahler-Ricci
flow in the cases being considered.  In the
cuspidal case discussed in \cite{lz-duke}, such
asymptotics were developed in Rochon-Zhang \cite{rz-advances}.

One can also ask about long-time behavior, at least when
$K_{\overline{X}} + D \ge 0$ in the cylindrical and bulging cases,
or when $K_{\overline{X}} + (n+1) D \ge 0$ in the conical case.
Chau \cite{chau} and the authors \cite[Theorem 5.1]{lz-duke} 
gave sufficient conditions in the noncompact case
to ensure convergence to a K\"ahler-Einstein metric of negative
Ricci curvature. For convergence to a 
noncompact Ricci-flat K\"ahler metric,
the only result that we know is by Chau and Tam
\cite{chau-tam}, who prove convergence to a Ricci-flat K\"ahler
metric if a Sobolev inequality holds for the initial metric 
(relevant to conical asymptotics)
and if the Ricci potential of the initial metric 
has a faster-than-quadratic decay.

\section{Background material} \label{section2}

Throughout this paper, we use the notation and conventions
of \cite[Section 2]{lz-duke}. In particular,
$\Delta$ is the open unit ball in $\C$ and $\Delta^* = \Delta - \{0\}$.
We let $H$ denote the upper half plane $\{ z \in \C \: : \:
\Image(z) > 0 \}$.
If $(Z, d_Z)$ is a metric space then for
$\lambda > 0$, we let $\frac{1}{\lambda} Z$ denote $Z$ with the
metric $\frac{d_Z}{\lambda}$. Thus if $(Z, g_Z)$ is a 
Riemannian manifold then $\frac{1}{\lambda} Z$ denotes
$Z$ with the Riemannian metric $\frac{g_Z}{\lambda^2}$.

Let $X$ be a connected complex manifold of complex dimension $n$.
Suppose that $\omega_0$ is a smooth complete K\"ahler metric on $X$
with bounded curvature. The K\"ahler-Ricci flow equation is
\begin{equation} \label{2.1}
\frac{\partial \widetilde{\omega}_t}{\partial t} = -
\Ric(\widetilde{\omega}_t), \: \: \: \: \: \: 
\widetilde{\omega}_0 = \omega_0.
\end{equation}
There is some $T > 0$ so that there is a solution of (\ref{2.1})
on the time interval $[0, T]$
having complete time slices and uniformly bounded
curvature on $[0,T]$. Furthermore, such a solution is unique.

Put 
\begin{equation} \label{2.2}
\omega_t = \omega_0 - t \Ric(\omega_0).
\end{equation}
Consider the equation 
\begin{equation} \label{2.3}
\frac{\partial u}{\partial t} = \log
\frac{(\omega_t + \sqrt{-1} \partial \overline{\partial} u)^n}{\omega_0^n}
\end{equation}
with the initial condition $u(0, \cdot) = 0$. It is implicit that we
only consider solutions $u$ of (\ref{2.3}) on time intervals so that
$\omega_t + \sqrt{-1} \partial \overline{\partial} u > 0$. The following results 
are from \cite{lz-duke}.

\begin{proposition} \label{2.4}
Suppose that there is a smooth solution to (\ref{2.1}) on a time interval
$[0, T]$, with complete time slices and uniformly bounded curvature.
Then there is a smooth solution for $u$ in (\ref{2.3}) on the time
interval $[0,T]$ so that 
\begin{enumerate} [\upshape (i)]
\item For each $t \in [0, T]$, $\omega_t + \sqrt{-1} \partial
\overline{\partial} u$ is a K\"ahler metric which is biLipschitz-equivalent
to $\omega_0$, and
\item For each $k$, the $k$th covariant derivatives of $u$
(with respect to the initial metric $\omega_0$) are uniformly bounded.
\end{enumerate}
Also, $\widetilde{\omega}_t = \omega_t + \sqrt{-1} \partial
\overline{\partial} u$.

Conversely, suppose that there is a smooth solution to (\ref{2.3}) on a
time interval $[0, T]$ so that
\begin{enumerate} [\upshape (i)]
\item For each $t \in [0, T]$, $\omega_t + \sqrt{-1} \partial
\overline{\partial} u$ is a K\"ahler metric which is 
biLipschitz-equivalent to $\omega_0$, and
\item For each $k$, the $k$th covariant derivatives of $u$
(with respect to the initial metric $\omega_0$) are uniformly bounded.
\end{enumerate}
Then $\widetilde{\omega}_t = \omega_t + \sqrt{-1} \partial
\overline{\partial} u$ is a solution to (\ref{2.1}) on $[0, T]$, 
with complete time slices and uniformly bounded curvature.
\end{proposition}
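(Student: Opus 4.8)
The plan is to prove Proposition \ref{2.4} by establishing the equivalence of the Monge-Ampère flow \eqref{2.3} and the Kähler-Ricci flow \eqref{2.1} in both directions, exploiting the fact that the background forms $\omega_t = \omega_0 - t\Ric(\omega_0)$ are chosen precisely to absorb the cohomological drift of the flow. The key cohomological observation is that both $\widetilde{\omega}_t$ (if it solves \eqref{2.1}) and $\omega_t + \sqrt{-1}\partial\overline{\partial}u$ lie in the same affine cohomology class $[\omega_0] - t\, c_1(X)$ (up to the $2\pi$ convention), so that their difference is $\sqrt{-1}\partial\overline{\partial}$ of a function; the content of the proposition is that this potential function is exactly the solution $u$ of \eqref{2.3}, and that the regularity and completeness properties match up.

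\emph{Forward direction.} First I would assume a complete, uniformly-bounded-curvature solution $\widetilde{\omega}_t$ of \eqref{2.1} on $[0,T]$. The goal is to produce $u$. I would set $u$ to be defined by integrating the Ricci potential: differentiating $\omega_t + \sqrt{-1}\partial\overline{\partial}u$ in $t$ and comparing with \eqref{2.1}, one checks that $\frac{\partial u}{\partial t} = \log\frac{(\omega_t + \sqrt{-1}\partial\overline{\partial}u)^n}{\omega_0^n}$ is exactly the condition forcing $\partial_t(\omega_t + \sqrt{-1}\partial\overline{\partial}u) = -\Ric(\omega_t + \sqrt{-1}\partial\overline{\partial}u)$, since $-\Ric$ of a Kähler form equals $\sqrt{-1}\partial\overline{\partial}\log\det$ of its metric and $\partial_t\omega_t = -\Ric(\omega_0)$. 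Concretely, I would let $\psi_t$ be the smooth function (unique up to a spatially-constant, time-dependent additive term) with $\widetilde{\omega}_t = \omega_t + \sqrt{-1}\partial\overline{\partial}\psi_t$ and $\psi_0 = 0$, fix the additive constant by demanding \eqref{2.3} pointwise, and then set $u = \psi_t$. The completeness and bounded-curvature hypotheses on $\widetilde{\omega}_t$ give assertion (1) that $\omega_t + \sqrt{-1}\partial\overline{\partial}u$ is biLipschitz to $\omega_0$, and the uniform derivative bounds in (2) follow from Shi-type local derivative estimates for the curvature of $\widetilde{\omega}_t$ combined with the $t$-integrated Monge-Ampère equation.

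\emph{Converse direction.} Conversely, given a solution $u$ of \eqref{2.3} with properties (1) and (2), I would set $\widetilde{\omega}_t = \omega_t + \sqrt{-1}\partial\overline{\partial}u$ and verify \eqref{2.1} by a direct computation: taking $\sqrt{-1}\partial\overline{\partial}$ of both sides of \eqref{2.3} and using $\sqrt{-1}\partial\overline{\partial}\log\frac{\widetilde{\omega}_t^n}{\omega_0^n} = \Ric(\omega_0) - \Ric(\widetilde{\omega}_t)$, one gets $\partial_t(\sqrt{-1}\partial\overline{\partial}u) = \Ric(\omega_0) - \Ric(\widetilde{\omega}_t)$, whence $\partial_t\widetilde{\omega}_t = -\Ric(\omega_0) + (\Ric(\omega_0) - \Ric(\widetilde{\omega}_t)) = -\Ric(\widetilde{\omega}_t)$. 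Property (1) supplies completeness (biLipschitz to the complete $\omega_0$) and positivity of the time slices, and property (2) supplies uniformly bounded curvature, which is what is needed to invoke the uniqueness clause stated before the proposition.

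\textbf{Main obstacle.} The hard part will be the regularity bookkeeping rather than the formal identity: in the forward direction one must justify that the potential $\psi_t$ really is smooth and spatially globally defined (not just local), that the additive constants can be fixed smoothly in $t$ so as to satisfy \eqref{2.3} on the nose, and that the integral curvature and derivative estimates transfer to the claimed uniform $C^k$ bounds on $u$ with respect to the \emph{fixed} metric $\omega_0$. Since both directions are essentially restatements of derivations in \cite{lz-duke}, I would expect the proof to consist mainly of citing the relevant lemmas there and checking that the background-form normalization \eqref{2.2} makes the cohomology classes agree; the analytic subtlety is ensuring completeness and uniform bounds survive the passage through the potential, so that the uniqueness for \eqref{2.1} genuinely forces $\widetilde{\omega}_t = \omega_t + \sqrt{-1}\partial\overline{\partial}u$.
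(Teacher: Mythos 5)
The paper does not prove Proposition \ref{2.4} here; it is quoted verbatim from \cite{lz-duke}, so the comparison is really with the argument there, which your outline essentially reproduces: the converse direction is exactly the computation you give (apply $\sqrt{-1}\partial\overline{\partial}$ to (\ref{2.3}), use $\partial_t \omega_t = -\Ric(\omega_0)$, read off completeness from the biLipschitz bound and bounded curvature from the uniform $C^k$ bounds on $u$), and the forward direction rests on the same cohomological normalization of $\omega_t$ that you identify.

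One step in your forward direction would be a genuine gap if executed as written. You propose to ``let $\psi_t$ be the smooth function (unique up to a spatially-constant, time-dependent additive term) with $\widetilde{\omega}_t = \omega_t + \sqrt{-1}\partial\overline{\partial}\psi_t$'' and then fix the constants. On a noncompact manifold there is no global $\partial\overline{\partial}$-lemma: the existence of a globally defined potential for the exact $(1,1)$-form $\widetilde{\omega}_t - \omega_t$ is not automatic, and even granting existence, two such potentials differ by an arbitrary pluriharmonic function, not by a constant, so the normalization you describe is not well posed. The standard fix --- and the one your own first sentence gestures at --- is to make the integral the \emph{definition}: set $u(t) = \int_0^t \log \frac{\widetilde{\omega}_s^n}{\omega_0^n}\, ds$, which is manifestly a globally defined smooth function; then $\partial_t\bigl(\widetilde{\omega}_t - \omega_t - \sqrt{-1}\partial\overline{\partial}u\bigr) = 0$ and both sides vanish at $t=0$, so $\widetilde{\omega}_t = \omega_t + \sqrt{-1}\partial\overline{\partial}u$ follows as a conclusion rather than being assumed, and (\ref{2.3}) holds by construction. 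With that rerouting the only remaining work is the regularity bookkeeping you correctly flag: the $L^\infty$ bound on $u$ comes from the biLipschitz hypothesis via $\partial_t u = \log(\widetilde{\omega}_t^n/\omega_0^n)$, while the uniform bounds on higher $\omega_0$-covariant derivatives down to $t=0$ need local derivative estimates that do not degenerate at the initial time (of the type in \cite[Appendix D]{Kleiner-Lott}, used elsewhere in this paper), since Shi's interior estimates alone blow up as $t \to 0$.
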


\begin{thm} \label{2.5}
Suppose that $\omega_0$ is a complete K\"ahler metric on a complex manifold
$X$, with bounded curvature.

Let $T_1$ be the supremum (possibly infinite) of the numbers $T^\prime \ge 0$
so that there is a smooth solution for (\ref{2.3}) on the time interval
$[0, T^\prime]$ such that
\begin{enumerate} [\upshape (i)]
\item For each $t \in [0, T^\prime]$, $\omega_t + \sqrt{-1}
\partial \overline{\partial} u$ is a K\"ahler metric which is
biLipschitz-equivalent to $\omega_0$, and 
\item For each $k$, the $k$th covariant derivatives of $u$
(with respect to the initial metric $\omega_0$) are uniformly
bounded on $[0, T^\prime]$.
\end{enumerate}

Let $T_2$ be the supremum (possibly infinite) of the numbers $T \ge 0$ for
which there is a function $F_T \in C^\infty(X)$ such that
\begin{enumerate} [\upshape (i)]
\item $\omega_T + \sqrt{-1} \partial \overline{\partial} F_T$ is a
K\"ahler metric which is biLipschitz-equivalent to $\omega_0$, and
\item For each $k$, the $k$th covariant derivatives of $F_T$ 
(with respect to the initial metric $\omega_0$) are uniformly bounded.
\end{enumerate}

Then $T_1 = T_2$.
\end{thm}

\section{Cylindrical K\"ahler-Ricci flows} \label{section3}

This section deals with cylindrical spatial asymptotics. In
Subsection \ref{subsection3.1} we define the notion of cylindrical standard
spatial asymptotics, and show that it is preserved under the
K\"ahler-Ricci flow, taking into account the divisor flow.
In Subsection \ref{subsection3.2} we introduce cylindrical superstandard
spatial asymptotics. We show that if $\overline{X}$ admits a
K\"ahler metric then $X$ admits a metric with
cylindrical superstandard spatial asymptotics.
We prove that having cylindrical superstandard spatial
asymptotics is preserved under the K\"ahler-Ricci flow. Given
a metric with cylindrical superstandard spatial asymptotics,
we define a certain renormalized cohomology class on the
compactification $\overline{X}$. We use this cohomology class
to characterize the first singularity time, if there is one.

Let $\overline{X}$ be a compact connected $n$-dimensional complex manifold.
For $1 \le i \le k$, let $f_i : \overline{X} \rightarrow C_i$ be a
holomorphic fibering over a complex curve $C_i$. 
Given points $s_i \in C_i$, let $g_{s_i} : \Delta \rightarrow C_i$ be a
local parametrization of $C_i$ with $g_{s_i}(0) = s_i$.
Given an ordered set
$I = (i_1, \ldots, i_m), 1 \le i_1 < \ldots < i_m \le k$, put
\begin{enumerate} [\upshape (i)]
\item $C_I = C_{i_1} \times \ldots \times C_{i_m}$,
\item $s_I = (s_{i_1}, \ldots, s_{i_m})$, and
\item Let $f_I : \overline{X} \rightarrow C_I$ be the product map
$f_I = (f_{i_1}, \ldots f_{i_m})$.
\end{enumerate}

Suppose that for each ordered set $I$, the point $s_I$ is a regular value for
$f_I$. Put $D_I = f_I^{-1}(s_I)$ and
$D = \bigcup_{i=1}^k D_{i}$. Then $D$ is an effective divisor with
simple normal crossings. Put $D_I^{\Int} = D_I - \bigcup_{I^\prime : 
|I^\prime| > |I|} D_{I^\prime}$. Then $D_I^{\Int}$ is a smooth
complex manifold of dimension $n - |I|$, possibly noncompact. 
Put $X = \overline X - D$.

Let $L_i$ be the holomorphic line bundle on $C_i$ associated to
$s_i$. Then $L_D = \bigotimes_{i=1}^k f_i^* L_i$ is the
holomorphic line bundle on $\overline{X}$ associated to $D$.
There is a holomorphic
section $\sigma_i$ of $L_i$ with zero set $s_i$, which is nondegenerate
at $s_i$. It is unique up to multiplication by a nonzero complex number.

\begin{remark}
In what follows, we could replace the point $s_i$ by a finite
subset of $C_i$, without any essential change.
\end{remark}

\begin{remark}
In \cite[Section 5]{tian-yau2}, Tian and Yau considered the case of
a smooth divisor, i.e. when $k=1$. They proved that if $K_{\overline{X}} + D
= 0$ then there is a complete Ricci-flat K\"ahler metric on $X$.
\end{remark}

\subsection{Cylindrical standard spatial asymptotics} \label{subsection3.1}

Suppose that $\overline{x} \in D_I^{\Int}$.
After permutation of indices, we can assume that
$\overline{x} \in (D_1 \cap D_2 \cap \ldots \cap D_m) -
(D_{m+1} \cup D_{m+2} \cup \ldots \cup D_k)$. We write
$0$ for $(0, \dots, 0) \in \Delta^n$. 
Let $p_i : \Delta^n \rightarrow \Delta$ be  projection onto the
$i$-th factor.
Then there are a neighborhood
$U$ of $\overline{x}$ in $\overline{X}$ and
a biholomorphic map
$F_{\overline{x}} : \Delta^n \rightarrow U$
so that
\begin{enumerate} [\upshape (i)]
\item For $i>m$, $U \cap D_i = \emptyset$,
\item $F_{\overline{x}}(0) = \overline{x}$, and
\item For $1 \le i \le m$, we have $f_i \circ F_{\overline{x}} = 
g_{s_i} \circ p_i$.
\end{enumerate}

In particular, $F_{\overline{x}} \left( (\Delta^*)^m \times
\Delta^{n-m} \right) = U \cap X$. The map $G_{\overline{x}}$ on
$\Delta^{n-m}$, given by $G_{\overline{x}}(w) = 
F_{\overline{x}}(0,w)$, is a biholomorphic map from $\Delta^{n-m}$
to a neighborhood of $\overline{x}$ in $D_I^{\Int}$.
Let $z^i$ be a local coordinate on $C_i$ around
$s_i$, which is the local inverse of
$g_{s_i}$. We also write $z^i$ for its pullback under
$f_i$ to a function on a neighborhood of $\overline{x}$.

Given $r \in (\R^+)^m$, let $\alpha_{r} : (\Delta^*)^m \rightarrow
(\Delta^*)^m$ be multiplication by $r$. If $Z$ is an auxiliary space
then we also write $\alpha_{r}$ for $(\alpha_{r}, \Id) : 
(\Delta^*)^m \times Z \rightarrow (\Delta^*)^m \times Z$.

\begin{defin} \label{3.1}
Let $\left\{ \omega_{D_I^{\Int}} \right\}$ be complete K\"ahler metrics
on $\left\{ D_I^{\Int} \right\}$. Let $\{c_i\}_{i=1}^k$ be positive
numbers.  Then $\omega_X$ has {\em cylindrical 
standard spatial asymptotics} associated
to $\left\{ \omega_{D_I^{\Int}} \right\}$ and $\{c_i\}_{i=1}^k$ if for
every $\overline{x} \in D_I^{\Int}$ and every local parametrization
$F_{\overline{x}}$,
\begin{equation} \label{3.2}
\lim_{r \rightarrow 0} \alpha_{r}^* F_{\overline{x}}^* \omega_X =
\sum_{i \in I} 2 c_i \sqrt{-1}
\frac{dz^i \wedge d\overline{z}^i}{|z_i|^2}
+ G_{\overline{x}}^* \omega_{D_I^{\Int}}.
\end{equation}
The limit in (\ref{3.2}) is taken in the
pointed $C^\infty$-topology around the basepoint 
$\left( \frac12, \ldots, \frac12 \right) \times 0 \in 
(\Delta^*)^m \times \Delta^{n-m}$. 
\end{defin}

\begin{proposition} \label{3.3} 
The notion of  
cylindrical standard spatial asymptotics in Definition \ref{3.1} 
is consistent under change of local coordinate.
\end{proposition}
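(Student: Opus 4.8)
The plan is to show that condition (\ref{3.2}) is independent of the chosen local parametrization: if it holds at a point $\overline{x} \in D_I^{\Int}$ for one choice, then it holds for every other choice (including a change of the local coordinate $z^i$ on each curve $C_i$). Since Definition \ref{3.1} quantifies over all parametrizations, this is exactly the claimed consistency, and in practice it means the condition need only be verified for a single chart at each $\overline{x}$. So I would fix $\overline{x}$, take two parametrizations $F_{\overline{x}}$ and $\widetilde{F}_{\overline{x}}$ (adapted to coordinates $z^i$ and $\widetilde{z}^i$ respectively), assume (\ref{3.2}) for $\widetilde{F}_{\overline{x}}$, and deduce it for $F_{\overline{x}}$.

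The first step is to record the structure of the transition map $\Phi = \widetilde{F}_{\overline{x}}^{\,-1} \circ F_{\overline{x}}$. Property (3) of the two parametrizations pins down the first $m$ components: for $i \le m$ the $i$-th component of $\Phi$ equals $h_i(z^i)$, where $h_i = \widetilde{g}_{s_i}^{-1} \circ g_{s_i}$ is a biholomorphism near $0$ with $h_i(0) = 0$ and $a_i := h_i'(0) \ne 0$. Thus $\Phi$ is triangular, $\Phi(z,w) = \left( h_1(z^1), \ldots, h_m(z^m), \psi(z,w) \right)$, and setting $z=0$ identifies $\psi(0, \cdot) = \widetilde{G}_{\overline{x}}^{-1} \circ G_{\overline{x}}$ with the transition map between the two induced parametrizations of $D_I^{\Int}$. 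This triangular form, forced by compatibility with the fibrations $f_i$, is the geometric heart of the matter.

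Next I would conjugate $\Phi$ by the rescalings. From $F_{\overline{x}} = \widetilde{F}_{\overline{x}} \circ \Phi$ one gets
\begin{equation}
\alpha_{r}^* F_{\overline{x}}^* \omega_X \; = \; \beta_r^* \left( \alpha_{r}^* \widetilde{F}_{\overline{x}}^* \omega_X \right),
\qquad
\beta_r := \alpha_{r}^{-1} \circ \Phi \circ \alpha_{r},
\end{equation}
and the triangular form gives $\beta_r(z,w) = \left( h_1(r_1 z^1)/r_1, \ldots, h_m(r_m z^m)/r_m, \psi(r\cdot z, w) \right)$, where $r \cdot z = (r_1 z^1, \ldots, r_m z^m)$. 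As $r \to 0$ each $h_i(r_i z^i)/r_i \to a_i z^i$ and $\psi(r\cdot z, w) \to \psi(0, w)$ in $C^\infty$ on compact sets, so $\beta_r \to \beta_0$ with $\beta_0(z,w) = \left( a_1 z^1, \ldots, a_m z^m, \psi(0,w) \right)$. Combining the hypothesis $\alpha_{r}^* \widetilde{F}_{\overline{x}}^* \omega_X \to \widetilde{\eta}$ (the right-hand side of (\ref{3.2}) for $\widetilde{F}_{\overline{x}}$) with $\beta_r \to \beta_0$, and using joint continuity of the pullback in the pointed $C^\infty$-topology, I would conclude $\alpha_{r}^* F_{\overline{x}}^* \omega_X \to \beta_0^* \widetilde{\eta}$.

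It then remains to compute $\beta_0^* \widetilde{\eta}$ and match it with the right-hand side of (\ref{3.2}) for $F_{\overline{x}}$. The cylindrical factor is the decisive check: since $\beta_0$ acts on the $i$-th variable by $z^i \mapsto a_i z^i$ and $\frac{d(a_i z^i) \wedge d\overline{a_i z^i}}{|a_i z^i|^2} = \frac{dz^i \wedge d\overline{z}^i}{|z^i|^2}$, the constants $a_i$ cancel and the cylindrical part is reproduced exactly; this is precisely the scale invariance of $\frac{dz \wedge d\overline{z}}{|z|^2}$ that makes the definition insensitive to the coordinate $z^i$. On the divisor factor $\beta_0$ restricts to $\psi(0, \cdot) = \widetilde{G}_{\overline{x}}^{-1} \circ G_{\overline{x}}$, so $\beta_0^* \widetilde{G}_{\overline{x}}^* \omega_{D_I^{\Int}} = \left( \widetilde{G}_{\overline{x}} \circ \psi(0,\cdot) \right)^* \omega_{D_I^{\Int}} = G_{\overline{x}}^* \omega_{D_I^{\Int}}$. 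Hence $\beta_0^* \widetilde{\eta}$ is the right-hand side of (\ref{3.2}) for $F_{\overline{x}}$, as required. The hard part will be the bookkeeping in the convergence step: since the limit is pointed around $\left( \frac12, \ldots, \frac12 \right) \times 0$, I must check that for small $r$ the map $\beta_r$ is defined on a neighborhood of the basepoint (the two charts always overlap near $\overline{x}$) and carries it into the region where $\widetilde{\eta}$ lives, and that joint $C^\infty$-convergence of the maps $\beta_r$ and the tensors $\alpha_{r}^* \widetilde{F}_{\overline{x}}^* \omega_X$ indeed yields convergence of the pullbacks. The fact that $\beta_0$ may move the basepoint (to $\left( \frac{a_1}2, \ldots, \frac{a_m}2 \right) \times 0$) is harmless, because $\widetilde{\eta}$ is invariant under the coordinatewise dilations $z^i \mapsto \lambda_i z^i$.
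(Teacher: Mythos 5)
Your proposal is correct and follows essentially the same route as the paper: both conjugate the transition map by the rescalings (your $\beta_r$ is exactly the map underlying the paper's operator $\alpha_r^* \left( \widehat{F}_{\overline{x}}^{-1} \circ F_{\overline{x}} \right)^* (\widehat{\alpha}_r^*)^{-1}$), show it converges to the linear map $z^i \mapsto a_i z^i$ together with $\psi(0,\cdot)$ on the divisor factor, and conclude via the dilation invariance of $\frac{dz \wedge d\overline{z}}{|z|^2}$. Your explicit attention to the triangular form forced by the fibrations and to the harmless displacement of the basepoint makes the bookkeeping slightly more careful than the paper's, but the argument is the same.
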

\begin{proof}
Let $\{w^i\}_{i=1}^{n-m}$ be local coordinates for $D_{(1,\ldots,m)}$ around
$\overline{x}$.
If $\{\widehat{z}^i, \widehat{w}^i\}$ is a
different choice of coordinates then we can write
$\widehat{z}^i = \widehat{z}^i(z^i)$ and $\widehat{w}^i =
\widehat{w}^i(z,w)$. Let $\alpha_r$ be the operation of 
multiplying $z$ by $r$ and
let $\widehat{\alpha}_{r}$ be the operation of multiplying $\widehat{z}$ by
$r$. Then
\begin{equation} \label{3.4}
\alpha_r^* F_{\overline{x}}^* \omega_X = 
\alpha_r^* \left( \widehat{F}_{\overline{x}}^{-1} \circ
F_{\overline{x}} \right)^* (\widehat{\alpha}_r^*)^{-1} 
\widehat{\alpha}_r^* \widehat{F}_{\overline{x}}^* \omega_X.
\end{equation}
Suppose that $\omega_X$ has  cylindrical
standard spatial asymptotics with respect
to $(\widehat{z}, \widehat{w})$. Then
\begin{equation} \label{3.5}
\lim_{r \rightarrow 0} \widehat{\alpha}_{r}^* 
\widehat{F}_{\overline{x}}^* \omega_X =
\sum_{i=1}^m 2 c_i \sqrt{-1}
\frac{d\widehat{z}^i \wedge d\overline{\widehat{z}}^i}{|
\widehat{z}_i|^2}
+ \widehat{G}_{\overline{x}}^* \omega_{D_I^{\Int}}.
\end{equation}

Now 
\begin{equation} \label{3.6}
\left( \widehat{\alpha}_r^* \right)^{-1} \widehat{z}^i =
r^{-1} \widehat{z}^i.
\end{equation}
Expanding 
\begin{equation} \label{3.7}
\widehat{z}^i = a_{1,i} z^i + a_{2,i} \left( z^i \right)^2 + 
a_{3,i} \left( z^i \right)^3 + \ldots,
\end{equation}
with $a_{1,i} \neq 0$, we have
\begin{equation} \label{3.8}
\left( \widehat{F}_{\overline{x}}^{-1} \circ
F_{\overline{x}} \right)^* (\widehat{\alpha}_r^*)^{-1} \widehat{z}^i =
r^{-1} \left( a_{1,i} z^i + a_{2,i} \left( z^i \right)^2 + 
a_{3,i} \left( z^i \right)^3 + \ldots \right)
\end{equation}
Then
\begin{equation} \label{3.9}
\alpha_r^* \left( \widehat{F}_{\overline{x}}^{-1} \circ
F_{\overline{x}} \right)^* (\widehat{\alpha}_r^*)^{-1} \widehat{z}^i =
a_{1,i} z^i + a_{2,i} r \left( z^i \right)^2 + 
a_{3,i} r^2 \left( z^i \right)^3 + \ldots
\end{equation}
It follows that 
\begin{equation} \label{3.10}
\lim_{r \rightarrow 0} 
\alpha_r^* \left( \widehat{F}_{\overline{x}}^{-1} \circ
F_{\overline{x}} \right)^* (\widehat{\alpha}_r^*)^{-1} \widehat{z}^i =
a_{1,i} z^i. 
\end{equation}
Hence
\begin{equation} \label{3.11}
\lim_{r \rightarrow 0} 
\alpha_r^* \left( \widehat{F}_{\overline{x}}^{-1} \circ
F_{\overline{x}} \right)^* (\widehat{\alpha}_r^*)^{-1} 
\frac{d \widehat{z}^i \wedge d\overline{\widehat{z}^i}}{|\widehat{z}^i|^2} =
\frac{d{z}^i \wedge d\overline{z^i}}{|z^i|^2},
\end{equation}
with smooth pointed convergence around $\frac12 \in \Delta^*$.

Similarly, writing $\widehat{w} = \widehat{w}(z,w)$, we have
\begin{equation} \label{3.12}
\lim_{r \rightarrow 0} 
\alpha_r^* \left( \widehat{F}_{\overline{x}}^{-1} \circ
F_{\overline{x}} \right)^* (\widehat{\alpha}_r^*)^{-1} \widehat{w} =
\widehat{w}(0,w)
\end{equation}
Then
\begin{equation} \label{3.13}
\lim_{r \rightarrow 0} 
\alpha_r^* \left( \widehat{F}_{\overline{x}}^{-1} \circ
F_{\overline{x}} \right)^* (\widehat{\alpha}_r^*)^{-1} 
\widehat{G}_{\overline{x}}^* \omega_{D_I^{\Int}} 
 = {G}_{\overline{x}}^* \omega_{D_I^{\Int}}.
\end{equation}
In view of (\ref{3.4}) and (\ref{3.5}), the proposition follows.
\end{proof}

\begin{remark}
In the proof of Proposition \ref{3.3}, if we allowed more general
coordinate changes, of the form $\widehat{z}^i = \widehat{z}^i(z,w)$, then
the result of the proposition would definitely fail.  
This explains why we assume
that $\overline{X}$ fibers over curves, so that it makes sense to
consider coordinate changes of the form $\widehat{z}^i = \widehat{z}^i(z)$.
At the least, we need to assume that $D_i$ has a trivial normal bundle.
\end{remark}

\begin{proposition} \label{3.14}
If $\overline{X}$ admits a K\"ahler metric then $X$ admits
a complete K\"ahler metric with  cylindrical standard spatial
asymptotics.
\end{proposition}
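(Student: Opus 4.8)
The plan is to write down an explicit metric and read off its asymptotics. Fix a K\"ahler metric $\omega_{\overline{X}}$ on $\overline{X}$. For each $i$, the line bundle $L_i = f_i^* L_i^C$ is the pullback of $L_i^C = \mathcal{O}_{C_i}(s_i)$, a degree-one (hence positive) line bundle on the compact curve $C_i$. I would choose a Hermitian metric $h_i^C$ on $L_i^C$ whose curvature $\beta_i^C$ is positive, put $h_i = f_i^* h_i^C$, and rescale $h_i$ by a small positive constant so that $|\sigma_i|^2_{h_i} < 1$ on all of $\overline{X}$. Set $\rho_i = - \log |\sigma_i|^2_{h_i}$, a smooth positive function on $X$ tending to $+\infty$ along $D_i$. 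The crucial features of this choice are that $\rho_i$ is pulled back from $C_i$ (so in the coordinates of the setup it is a function of $z^i$ alone) and that $\beta_i \defeq \sqrt{-1} \partial \overline{\partial} \rho_i = f_i^* \beta_i^C$ is a smooth \emph{semipositive} form on $\overline{X}$. I then propose the metric
\[
\omega_X = \omega_{\overline{X}} + \sum_{i=1}^k c_i \sqrt{-1} \partial \overline{\partial} \rho_i^2 .
\]

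Next I would check that $\omega_X$ is a complete K\"ahler metric. A direct computation gives $\sqrt{-1} \partial \overline{\partial} \rho_i^2 = 2 \rho_i \beta_i + 2 \sqrt{-1} \partial \rho_i \wedge \overline{\partial} \rho_i$, and both summands are nonnegative, since $\rho_i > 0$, $\beta_i \ge 0$, and $\sqrt{-1} \partial \rho_i \wedge \overline{\partial} \rho_i \ge 0$ automatically. Hence $\omega_X \ge \omega_{\overline{X}} > 0$, so $\omega_X$ is a genuine K\"ahler form (it is closed, being a sum of $\omega_{\overline{X}}$ and $\partial \overline{\partial}$-exact forms). Because $\sigma_i$ vanishes to first order along $D_i$, near $D_i$ one has $\rho_i = - \log |z_i|^2 + (\text{smooth})$, so $2 \sqrt{-1} \partial \rho_i \wedge \overline{\partial} \rho_i$ dominates the cylinder form $2 \sqrt{-1} |z_i|^{-2} dz^i \wedge d\overline{z}^i$; the radial integral $\int |z_i|^{-1} |dz_i|$ diverges, so $D$ lies at infinite distance and $\omega_X$ is complete.

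The heart of the matter is verifying condition~(\ref{3.2}). Fix $\overline{x} \in D_I^{\Int}$ with $I = (1, \ldots, m)$ and a parametrization $F_{\overline{x}}$, and analyze $\lim_{r \to 0} \alpha_r^* F_{\overline{x}}^* \omega_X$ term by term. Since $h_i$ is pulled back from $C_i$, $\rho_i$ depends only on $z^i$, so $\sqrt{-1} \partial \rho_i \wedge \overline{\partial} \rho_i = |\partial_{z_i} \rho_i|^2 \sqrt{-1} dz^i \wedge d\overline{z}^i$ has no mixed components; its coefficient is $|z_i^{-1} + O(1)|^2$, and after incorporating the $r^2$ coming from $dz^i \wedge d\overline{z}^i$ it converges to $|z_i|^{-2}$. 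Thus the $i \in I$ terms contribute exactly $\sum_{i \in I} 2 c_i \sqrt{-1} |z_i|^{-2} dz^i \wedge d\overline{z}^i$. The curvature pieces $2 c_i \rho_i \beta_i$ for $i \in I$ have coefficient growing like $- \log |z_i|^2$ but carry an $r^2$ factor after pullback, and since $r^2 \log r \to 0$ they vanish in the limit. Finally $\omega_{\overline{X}}$ contributes only its restriction to the $w$-directions at $z = 0$, while the terms with $j \notin I$ (for which $\rho_j$ is smooth near $\overline{x}$) contribute $\sqrt{-1} \partial \overline{\partial}$ of smooth functions of $w$; together these define $G_{\overline{x}}^* \omega_{D_I^{\Int}}$, which is itself a complete K\"ahler metric on $D_I^{\Int}$ of the same cylindrical type, its completeness coming from the $j \notin I$ terms as $w$ approaches deeper strata. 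Consistency of the resulting family $\{ \omega_{D_I^{\Int}} \}$ under change of chart is exactly Proposition \ref{3.3}.

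The main obstacle is positivity: the cross-term $2 c_i \rho_i \beta_i$ grows like $\log$ along $D_i$ and cannot be dominated by the bounded form $\omega_{\overline{X}}$. This is precisely why I would insist on choosing $h_i$ so that $\beta_i \ge 0$, which is possible only because each $L_i$ is the pullback of a positive line bundle on the curve $C_i$ (equivalently, $D_i$ has trivial normal bundle, as flagged in the remark following Proposition \ref{3.3}). With that sign secured, the growing term helps rather than hurts positivity, and the same pullback choice simultaneously eliminates the mixed $z^i$--$w$ terms that would otherwise survive into the limit and spoil~(\ref{3.2}).
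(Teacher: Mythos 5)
Your construction is essentially the paper's: Proposition \ref{3.14} is proved there by appeal to Proposition \ref{3.21}, whose proof takes $\omega_X = \sqrt{-1}\partial\overline{\partial}\bigl(\sum_{i=1}^k c_i f_i^*\log^2|\sigma_i|^{-2}_{h_i}\bigr) + K\omega_{\overline{X}}$ and rests on the same expansion of $\sqrt{-1}\partial\overline{\partial}\log^2|\sigma_i|^{-2}_{h_i}$ into the singular term $2\sqrt{-1}\partial\rho_i\wedge\overline{\partial}\rho_i$ plus the logarithmically growing curvature term. The only real difference is how positivity is arranged: the paper takes $K$ large with an arbitrary $h_i$, which suffices because the possibly indefinite term $2c_i\rho_i\beta_i$ is pulled back from the curve $C_i$ and hence lives only in the $dz^i\wedge d\overline{z}^i$ direction, where it is dominated near $D_i$ by the $|z_i|^{-2}$ term (so the ``main obstacle'' you describe is not actually an obstacle), whereas you instead normalize $h_i$ to have positive curvature and $|\sigma_i|_{h_i}<1$ --- both choices work.
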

\begin{proof}
This will follow from Proposition \ref{3.21}.
\end{proof}

We now prove Theorem \ref{intro2}, showing 
that the property of having cylindrical standard spatial
asymptotics is preserved under the K\"ahler-Ricci flow. 

\begin{proposition} \label{3.15}
Suppose that $\omega_X(0)$ has  cylindrical
standard spatial asymptotics associated
to $\{\omega_{D_I^{\Int}}(0) \}$ and $\{c_i\}_{i=1}^k$. Suppose that the
K\"ahler-Ricci flow $\omega_X(t)$, with initial K\"ahler form $\omega_X(0)$, 
exists on a maximal time interval $[0, T)$ in the sense of Theorem 
\ref{2.5}. Then for all $t \in [0,T)$, the metric $\omega_X(t)$ has 
 cylindrical standard spatial asymptotics associated to 
$\{\omega_{D_I^{\Int}}(t) \}$ and $\{c_i\}_{i=1}^k$, where
$\omega_{D_I^{\Int}}(t)$ is the K\"ahler-Ricci flow on
$D_I^{\Int}$ with initial K\"ahler form $\omega_{D_I^{\Int}}(0)$.
\end{proposition}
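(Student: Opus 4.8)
The plan is to reduce the statement to the parabolic Monge–Amp\`ere formulation of Theorem \ref{2.5} and Proposition \ref{2.4}, and then to extract the divisor flow from the asymptotic limit of the rescaled potential. First I would fix a point $\overline{x} \in D_I^{\Int}$ (after permuting indices, $\overline{x} \in (D_1 \cap \ldots \cap D_m) - \bigcup_{i > m} D_i$) and a local parametrization $F_{\overline{x}} : \Delta^n \to U$ as in Subsection \ref{subsection3.1}. By Proposition \ref{2.4}, the K\"ahler-Ricci flow on $X$ can be written as $\omega_X(t) = \omega_t + \sqrt{-1} \partial \overline{\partial} u(t)$, where $\omega_t = \omega_X(0) - t \Ric(\omega_X(0))$ and $u$ solves the parabolic Monge–Amp\`ere equation (\ref{2.3}) with uniformly bounded covariant derivatives of all orders. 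The uniformity of these bounds is what lets me control the limit $r \to 0$ in (\ref{3.2}): I would apply $\alpha_r^* F_{\overline{x}}^*$ to the identity $\omega_X(t) = \omega_t + \sqrt{-1} \partial \overline{\partial} u(t)$ and analyze each term separately as $r \to 0$.

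The key computation is to determine the limit of $\alpha_r^* F_{\overline{x}}^* \omega_t$. Here I would use that $\Ric(\omega_X(0))$ is, up to $\sqrt{-1} \partial \overline{\partial}$ of a smooth bounded-derivative function, cohomologous to a representative built from $\omega_X(0)$ and the curvature of $L_D$; the point is that the $-t \Ric$ correction, after the rescaling $\alpha_r^*$ and passage to the limit, contributes exactly the linearization of the Ricci flow on the cylindrical factors and on $D_I^{\Int}$. On the cylindrical factors $\sum_{i \in I} 2 c_i \sqrt{-1} \, dz^i \wedge d\overline{z}^i / |z_i|^2$, the metric is flat (a product of Euclidean cylinders), so its Ricci curvature vanishes and the factor $2c_i$ is unchanged in time — this is why the constants $\{c_i\}$ are preserved. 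On the transverse factor, I expect the limit of $\alpha_r^* F_{\overline{x}}^* \omega_t$ to reproduce $G_{\overline{x}}^*(\omega_{D_I^{\Int}}(0) - t \Ric(\omega_{D_I^{\Int}}(0)))$, which is the linear-in-$t$ part of the divisor flow, by an argument analogous to the coordinate-change computation (\ref{3.6})--(\ref{3.13}) in Proposition \ref{3.3}.

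Next I would handle the potential term $\sqrt{-1} \partial \overline{\partial} u(t)$. The uniform bounds from Proposition \ref{2.4} give, after passage to a subsequence, a smooth limit $u_\infty(t)$ of $\alpha_r^* F_{\overline{x}}^* u(t)$ in the pointed $C^\infty$-topology around the basepoint $(\tfrac12, \ldots, \tfrac12) \times 0$; crucially, $u_\infty$ is independent of the cylindrical coordinates $z^i$ in the limit (since the rescaling $\alpha_r$ spreads out the $z$-dependence and the cylinders are asymptotically homogeneous), so $\sqrt{-1} \partial \overline{\partial} u_\infty(t)$ descends to a form on the $\Delta^{n-m}$ factor. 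The limiting Monge–Amp\`ere equation (\ref{2.3}), restricted to this factor, becomes precisely the scalar equation whose solution is the potential for the K\"ahler-Ricci flow on $D_I^{\Int}$; this identifies the limit of $\omega_X(t)$ as $\sum_{i \in I} 2c_i \sqrt{-1} \, dz^i \wedge d\overline{z}^i / |z_i|^2 + G_{\overline{x}}^* \omega_{D_I^{\Int}}(t)$.

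The main obstacle I anticipate is controlling the coupling between the cylindrical and divisor directions: a priori, the limiting potential $u_\infty(t)$ and the ratio of volume forms in (\ref{2.3}) could retain cross terms mixing the $dz^i$ and the $dw^j$ directions. Establishing that these cross terms vanish in the limit — equivalently, that the rescaled flow decouples into a flat cylindrical factor and a genuine K\"ahler-Ricci flow on $D_I^{\Int}$ — is the technical heart of the argument. I expect this to follow from the product structure of the limiting metric (\ref{3.2}) together with the observation that $\Ric$ of a product is the direct sum of the Riccis, so the volume ratio in (\ref{2.3}) factorizes in the limit; but verifying the decoupling carefully, and checking that the limit is independent of the chosen subsequence (so that the full limit exists), will require the uniform higher-derivative estimates of Proposition \ref{2.4} and an argument that the limiting equation has a unique solution on $D_I^{\Int}$ by Theorem \ref{2.5} applied on the divisor.
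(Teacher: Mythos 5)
Your overall strategy differs from the paper's: you work at the level of the Monge--Amp\`ere potential $u(t)$ and try to pass to the limit in the equation, whereas the paper works at the level of the metrics, applying Hamilton's compactness theorem to the rescaled flows $\alpha_{r_j}^* F_{\overline{x}}^* \omega_X(\cdot)$ (using the curvature and injectivity-radius bounds coming from the standard asymptotics, plus the local derivative estimates of \cite{Kleiner-Lott} to get smoothness down to $t=0$), gluing the local limits over a cover of $D_I^{\Int}$ into a \emph{complete} bounded-curvature Ricci flow on $(\C^*)^m \times D_I^{\Int}$, and then invoking the Chen--Zhu uniqueness theorem to identify the limit with the product flow $\sum_i 2c_i \sqrt{-1}\, dz^i \wedge d\overline{z}^i/|z_i|^2 + \omega_{D_I^{\Int}}(t)$. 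The point is that uniqueness of complete bounded-curvature Ricci flows is what forces the decoupling; it is not derived from the structure of the limiting equation.

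This is exactly where your proposal has a genuine gap. You correctly flag the decoupling as the technical heart, but your proposed resolution is circular: the volume ratio in (\ref{2.3}) factorizes in the limit only if the limiting evolving metric is already a product, which is the statement to be proved. Likewise, the claim that $u_\infty$ is independent of the cylindrical coordinates because ``the rescaling spreads out the $z$-dependence'' is unjustified and false in general: a function such as $\sin(\log|z^i|)$ has bounded covariant derivatives of all orders with respect to the cylindrical metric, and its rescaled limits remain $z$-dependent. What is true is that $\sqrt{-1}\partial\overline{\partial} u_\infty(t)$ must be the pullback of a form from $D_I^{\Int}$, but this is a \emph{consequence} of identifying the limit flow, not an input. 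Finally, any uniqueness argument for the limiting flow (whether phrased for the Ricci flow or for the parabolic Monge--Amp\`ere equation) requires complete time slices, so you cannot work only on the single chart $(\C^*)^m \times \Delta^{n-m}$; you need the globalization step over $D_I^{\Int}$ that the paper carries out, and the uniqueness must be applied on the noncompact product $(\C^*)^m \times D_I^{\Int}$, not ``on the divisor'' as your last sentence suggests. With these repairs your route essentially collapses back onto the paper's compactness-plus-uniqueness argument.
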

\begin{proof}
Take $\overline{x} \in D_I^{\Int}$.
After a change of labels we can assume that
$I = (1, \ldots, m)$. 
Let $F_{\overline{x}} : \left( \Delta^* \right)^m \times \Delta^{n-m}
\rightarrow \overline{X}$ be a local parametrization.
Let $r_j \rightarrow 0$ be any sequence.

From our assumptions, there is a uniform positive 
lower bound on the injectivity
radius of $F_{\overline{x}}^* \omega_X(0)$ at
$\alpha_{r_j} \left( \frac12, \ldots, \frac12, 0 \right)$
or, equivalently, on 
$\alpha_{r_j}^* F_{\overline{x}}^* \omega_X(0)$ at
$\left( \frac12, \ldots, \frac12, 0 \right)$.
Using the curvature bounds,
we can apply Hamilton's compactness
theorem \cite{Hamilton2} to extract a subsequence of 
$\left\{ \alpha_{r_j}^* F_{\overline{x}}^* \omega_X(\cdot) \right\}$
that converges to a Ricci flow solution
$\left( X_\infty, x_\infty, \omega_{X_\infty}(\cdot) \right)$,
defined on the time interval $[0, T)$.  There is a technical
issue that {\it a priori}, the solution is only smooth on
$(0, T)$, where we can apply Shi's local derivative estimates.
In order to get smoothness on $[0, T)$, we need
uniform bounds on the derivatives of the curvature tensor
of $\alpha_{r_j}^* F_{\overline{x}}^* \omega_X(t)$ for
$t$ in some interval $[0, \delta]$. 
Since we have uniform bounds on the derivatives of the curvature tensor
of $\alpha_{r_j}^* F_{\overline{x}}^* \omega_X(0)$ (on
time-$0$ metric balls around $\left( \frac12, \ldots, \frac12, 0 \right)$),
the local derivative estimate of
\cite[Appendix D]{Kleiner-Lott} gives the needed uniform bounds on the
$k$-th derivatives of the curvature tensor for small but positive time.
Then from the proof of \cite{Hamilton2}, we can say that after passing 
to a subsequence, 
there is a smooth pointed limit
\begin{align} \label{3.16}
& \lim_{j \rightarrow \infty} 
\left( \alpha_{r_j^{-1}} \left( (\Delta^*)^m \right) \times \Delta^{n-m}, 
\left( \frac12, \ldots, \frac12, 0 \right), 
\alpha_{r_j}^* F_{\overline{x}}^* \omega_X(\cdot) \right) = \\
& \left( (\C^*)^m \times \Delta^{n-m},
\left( \frac12, \ldots, \frac12, 0 \right), 
\omega_{\infty, \overline{x}}(\cdot) \right)
\notag
\end{align} 
for some K\"ahler-Ricci flow solution $\omega_{\infty, \overline{x}}(\cdot)$ 
that
exists on $(\C^*)^m \times \Delta^{n-m}$ for the time interval
$[0, T)$.

Covering $D_I^{\Int}$ by a locally finite collection 
$\left\{ G_{\overline{x}_k}
\left( \Delta^{n-m} \right) \right\}$ of charts, we can assume that
the Ricci flow solutions
$\left( (\C^*)^m \times \Delta^{n-m},
\left( \frac12, \ldots, \frac12, 0 \right), \omega_{\infty,
\overline{x}_k}(\cdot) \right)$
glue together to give a Ricci flow solution
$\left( (\C^*)^m \times D_I^{\Int}, \omega_{X_\infty}(\cdot)
\right)$ 
with complete time slices and bounded curvature on
compact time intervals; c.f.
\cite[Proof of Theorem 7.1]{lz-duke}.  
From the assumption of  cylindrical standard spatial
asymptotics,
\begin{equation} \label{3.17}
\omega_{X_\infty}(0) = \sum_{i=1}^m 2 c_i \sqrt{-1}
\frac{dz^i \wedge d\overline{z}^i}{|z_i|^2}
+ \omega_{D_I^{\Int}}(0).
\end{equation}
From the uniqueness of complete Ricci flow solutions with bounded curvature
on compact time intervals \cite{chen-zhu}, it follows that
\begin{equation} \label{3.18}
\omega_{X_\infty}(t) = \sum_{i=1}^m 2 c_i \sqrt{-1}
\frac{dz^i \wedge d\overline{z}^i}{|z_i|^2}
+ \omega_{D_I^{\Int}}(t).
\end{equation}

To return to the proof of the proposition,
suppose that its conclusion is not true.
Then for some $\overline{x} \in D_I^{\Int}$ and some $t \in [0, T)$,
there is a sequence $r_j \rightarrow 0$ with the property that
even after passing to any subsequence,
$\alpha_{r_j}^* F_{\overline{x}}^* \omega_X(t)$ does not converge to
$\sum_{i=1}^m 2 c_i \sqrt{-1}
\frac{dz^i \wedge d\overline{z}^i}{|z_i|^2}
+ G_{\overline{x}}^* \omega_{D_I^{\Int}}(t)$
in the pointed $C^\infty$-topology. Here the basepoint is
$\left( \frac12, \ldots, \frac12, 0 \right) \in 
\left( \Delta^* \right)^m \times \Delta^{n-m}$.
However, taking $\overline{x}_1 = \overline{x}$ in the above
construction, we have shown that after passing to a subsequence, 
$\lim_{j \rightarrow \infty} \alpha_{r_j}^* F_{\overline{x}}^* \omega_X(t)
=\sum_{i=1}^m 2 c_i \sqrt{-1}
\frac{dz^i \wedge d\overline{z}^i}{|z_i|^2}
+ G_{\overline{x}}^* \omega_{D_I^{\Int}}(t)$
in the pointed $C^\infty$-topology.
This is a contradiction, thereby proving the proposition.
\end{proof}

\subsection{Cylindrical superstandard spatial asymptotics}
\label{subsection3.2}

Let $h_{i}$ be a Hermitian metric on $L_i$.

\begin{defin} \label{3.19}
A K\"ahler metric $\omega_X$ on $X$ has  {\em cylindrical superstandard spatial
asymptotics} associated to $\{h_i\}_{i=1}^k$
if it has  cylindrical
standard spatial asymptotics (associated to $\{\omega_{D_I^{int}} \}$
and $\{ c_i \}_{i=1}^k$) and
\begin{equation} \label{3.20}
\omega_X = \eta_{\overline{X}} +
\sqrt{-1} \partial \overline{\partial} \left( \sum_{i=1}^k c_i 
f_i^* \log^2 \left| \sigma_i \right|_{h_i}^{-2} +
H \right), 
\end{equation}
where 
\begin{enumerate} [\upshape (i)]
\item $\eta_{\overline{X}}$ is a smooth closed $(1,1)$-form on
$\overline{X}$, and
\item $H \in C^\infty(X) \cap L^\infty(X)$. 
\end{enumerate}
\end{defin}

Note that in Definition \ref{3.19}, 
the choice of $\{h_i\}_{i=1}^k$ does matter.

\begin{proposition} \label{3.21}
If $\overline{X}$ admits a K\"ahler metric then $X$ admits
a complete K\"ahler metric with  cylindrical superstandard spatial
asymptotics.
\end{proposition}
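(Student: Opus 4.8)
The plan is to write down an explicit reference metric and verify the four required features (complete, Kähler, cylindrical standard asymptotics, superstandard potential form). Fix a Kähler metric $\omega_{\overline{X}}$ on $\overline{X}$ (which exists by hypothesis), choose arbitrary positive numbers $\{c_i\}_{i=1}^k$ and Hermitian metrics $\{h_i\}$ on $\{L_i\}$, set $\psi_i = f_i^* \log |\sigma_i|_{h_i}^{-2}$ and $\Phi = \sum_{i=1}^k c_i \psi_i^2$, and take
\[
\omega_X = A \, \omega_{\overline{X}} + \sqrt{-1}\, \partial \overline{\partial} \Phi
\]
for a constant $A$ to be fixed large. With this choice the form prescribed in Definition \ref{3.19} holds on the nose, with $\eta_{\overline{X}} = A\omega_{\overline{X}}$ (a smooth closed $(1,1)$-form on $\overline{X}$ since $\omega_{\overline{X}}$ is Kähler) and $H = 0 \in C^\infty(X) \cap L^\infty(X)$. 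So it suffices to show that $\omega_X$ is a complete Kähler metric with cylindrical standard asymptotics; since superstandard asymptotics include standard asymptotics, this also establishes Proposition \ref{3.14}.

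The heart of the matter is a local computation near $D$. Because $\sigma_i$ and $h_i$ live on the curve $C_i$, the function $\psi_i$ is the pullback of a function on $C_i$; in the coordinates of Subsection \ref{subsection3.1}, where $f_i$ is the coordinate $z^i$, we have $\psi_i = \psi_i(z^i)$, so $\sqrt{-1}\,\partial\overline{\partial}\psi_i^2$ is supported entirely in the $dz^i \wedge d\overline{z}^i$ slot. Writing $|\sigma_i|_{h_i}^2 = |z^i|^2 a_i$ with $a_i$ smooth and positive gives $\psi_i = -\log|z^i|^2 - \log a_i$, hence
\[
\sqrt{-1}\,\partial\overline{\partial}\psi_i^2 = -2\psi_i \sqrt{-1}\,\partial\overline{\partial}\log a_i + 2\sqrt{-1}\,\partial\psi_i \wedge \overline{\partial}\psi_i,
\]
where the first term is a bounded smooth form times the unbounded factor $\psi_i$, while the second is nonnegative with leading part $2\sqrt{-1}\,|z^i|^{-2}\, dz^i\wedge d\overline{z}^i$. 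The main estimate is the elementary fact that $|z^i|^{-2}$ dominates $\log|z^i|^{-2}$: this forces the $dz^i\wedge d\overline{z}^i$ coefficient to be $\ge 1$ on a neighborhood of $D_i$, while off that neighborhood $\psi_i$ is bounded (it is bounded below because $|\sigma_i|_{h_i}$ is bounded above on the compact $\overline{X}$, and bounded above away from $D_i$). Therefore $\sqrt{-1}\,\partial\overline{\partial}\Phi \ge -C'\omega_{\overline{X}}$ globally, so $\omega_X \ge (A-C')\omega_{\overline{X}} > 0$ once $A > C'$; being closed, $\omega_X$ is Kähler. Completeness follows from the same leading term: near each $D_i$ one has $\omega_X \ge \delta\, \sqrt{-1}\,|z^i|^{-2}\, dz^i\wedge d\overline{z}^i$, so the distance to $D_i$ diverges logarithmically, and every end of $X$ approaches $D$.

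To verify (\ref{3.2}), fix $\overline{x}\in D_I^{\Int}$ with $I = (1,\ldots,m)$ and compute $\lim_{r\to 0}\alpha_r^* F_{\overline{x}}^*\omega_X$. For $i\in I$, the leading part $2\sqrt{-1}\,|z^i|^{-2}\, dz^i\wedge d\overline{z}^i$ of $\sqrt{-1}\,\partial\overline{\partial}\psi_i^2$ is $\alpha_r$-invariant, while the remaining pieces (the $\log a_i$ term and the mixed terms, all supported in $dz^i\wedge d\overline{z}^i$) scale like $r^2\log r^{-1}\to 0$; together these contribute exactly $\sum_{i\in I} 2c_i\sqrt{-1}\,|z^i|^{-2}\, dz^i\wedge d\overline{z}^i$. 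The surviving pieces are $A\omega_{\overline{X}}$ together with the terms $\sqrt{-1}\,\partial\overline{\partial}\psi_j^2$ for $j\notin I$, which are supported in the transverse divisor directions and are untouched by $\alpha_r$; they restrict to $\{z^1=\cdots=z^m=0\}=D_I^{\Int}$. This identifies
\[
\omega_{D_I^{\Int}} := \left( A\omega_{\overline{X}} + \sum_{j\notin I} c_j\, \sqrt{-1}\,\partial\overline{\partial}\psi_j^2 \right)\Big|_{D_I^{\Int}},
\]
which is precisely the same construction carried out on the compact manifold $D_I$ with the fiberings $\{f_j|_{D_I}\}_{j\notin I}$. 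By downward induction on codimension, each $\omega_{D_I^{\Int}}$ is therefore a complete Kähler metric of cylindrical type, so the data $\{\omega_{D_I^{\Int}}\}$ are legitimate in the sense of Definition \ref{3.1} and (\ref{3.2}) holds.

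The main obstacle is concentrated in the second and third paragraphs: (i) the uniform positivity and completeness, both resting on the fact that $|z^i|^{-2}$ beats the logarithmic growth of $\psi_i$, so that the cylindrical term controls the sign-indefinite $\psi_i\,\partial\overline{\partial}\log a_i$ term near $D$; and (ii) the self-consistency that the induced divisor metrics $\omega_{D_I^{\Int}}$ are again complete and of cylindrical type, which I would make precise as an induction on the number of strata. The scaling computations in the asymptotic limit are then routine bookkeeping.
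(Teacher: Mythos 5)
Your proposal is correct and follows essentially the same route as the paper: the paper's proof (of Proposition \ref{3.21}) takes exactly the metric $\omega_X = K\omega_{\overline{X}} + \sqrt{-1}\,\partial\overline{\partial}\bigl(\sum_i c_i f_i^* \log^2 |\sigma_i|_{h_i}^{-2}\bigr)$ with $K$ large and asserts (without detail) that it is complete, K\"ahler, and has cylindrical standard, hence superstandard, asymptotics. Your write-up supplies the verifications the paper leaves implicit (positivity via $|z^i|^{-2}$ dominating $\log|z^i|^{-2}$, completeness, the scaling limit, and the inductive identification of the induced metrics $\omega_{D_I^{\Int}}$), all of which are sound.
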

\begin{proof}
We have
\begin{equation}
\sqrt{-1} \partial \overline{\partial}  
\log^2 |\sigma_i|_{h_i}^{-2} = 
2 \sqrt{-1} \partial \log |\sigma_i|_{h_i}^{-2} \wedge
\overline{\partial} |\sigma_i|_{h_i}^{-2} + 2 
\left( \log |\sigma_i|_{h_i}^{-2} \right) F_{h_i},
\end{equation}
where $F_{h_i}$ is the curvature $2$-form associated to $h_i$.
In terms of a local coordinate $z^i$ around $s^i$,
the right-hand side is asymptotic to $2 \sqrt{-1}
\frac{dz^i \wedge d\overline{z}^i}{|z^i|^2}$ as $z^i \rightarrow 0$.

Let $\omega_{\overline{X}}$ be a K\"ahler metric on $\overline{X}$.
Given a parameter $K < \infty$ and positive
constants $\{c_i\}_{i=1}^k$, put
\begin{equation} \label{3.22}
\omega_X = 
\sqrt{-1} \partial \overline{\partial} \left(
\sum_{i=1}^k c_i f_i^* \log^2 |\sigma_i|_{h_i}^{-2} \right)
+ K \omega_{\overline{X}}.
\end{equation}
Taking $K$ sufficiently large, $\omega_X$ is a complete K\"ahler metric on $X$
with cylindrical standard spatial asymptotics.
Then it clearly also has cylindrical superstandard spatial asymptotics.
\end{proof}

We now show that the property of having cylindrical superstandard spatial
asymptotics is preserved under the K\"ahler-Ricci flow. 

\begin{proposition} \label{3.23}
Suppose that $\omega_X(0)$ has 
 cylindrical superstandard spatial asymptotics associated
to $\{h_i\}_{i=1}^k$.
Suppose that the K\"ahler-Ricci flow $\omega_X(t)$,
with initial K\"ahler metric $\omega_X(0)$, exists on a maximal time
interval $[0,T)$ in the sense of Theorem \ref{2.5}. Then for all
$t \in [0,T)$, $\omega_X(t)$ has  cylindrical
superstandard spatial asymptotics,
associated to $\{h_i\}_{i=1}^k$.
\end{proposition}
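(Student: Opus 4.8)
The plan is to combine the flow representation from Proposition~\ref{2.4} with a structural description of the Ricci form of the initial metric. Fix $t \in [0,T)$ and apply Proposition~\ref{2.4} on $[0,t]$: this writes the flow as $\omega_X(t) = \omega_t + \sqrt{-1}\partial\overline{\partial}u$, where $\omega_t = \omega_X(0) - t\,\Ric(\omega_X(0))$ and where $u$, together with all of its covariant derivatives with respect to $\omega_X(0)$, is uniformly bounded; in particular $u \in C^\infty(X)\cap L^\infty(X)$. Proposition~\ref{3.15} already guarantees that $\omega_X(t)$ has cylindrical standard spatial asymptotics, with the same constants $\{c_i\}$ and the evolving divisor metrics $\{\omega_{D_I^{\Int}}(t)\}$, so the only thing left to check is the superstandard structural form~(\ref{3.20}), associated to the same $\{h_i\}_{i=1}^k$.

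The key step is to show that
\begin{equation}
\Ric(\omega_X(0)) = \rho_{\overline{X}} - \sqrt{-1}\partial\overline{\partial}B,
\end{equation}
where $\rho_{\overline{X}}$ is a smooth closed $(1,1)$-form on $\overline{X}$ and $B \in C^\infty(X)\cap L^\infty(X)$. To this end I would fix a smooth positive volume form $\Omega$ on $\overline{X}$ and use $\Ric(\omega_X(0)) = \Ric(\Omega) - \sqrt{-1}\partial\overline{\partial}\log\frac{\omega_X(0)^n}{\Omega}$, with $\Ric(\Omega)$ smooth and closed on $\overline{X}$. Since $\sqrt{-1}\partial\overline{\partial}\,f_i^*\log|\sigma_i|_{h_i}^{-2} = f_i^* F_{h_i}$ is smooth and closed on $\overline{X}$, the claim reduces to the volume estimate
\begin{equation}
\log\frac{\omega_X(0)^n}{\Omega} = \sum_{i=1}^k f_i^*\log|\sigma_i|_{h_i}^{-2} + B, \qquad B \in C^\infty(X)\cap L^\infty(X),
\end{equation}
after which one sets $\rho_{\overline{X}} = \Ric(\Omega) - \sum_{i=1}^k f_i^* F_{h_i}$.

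Proving this volume estimate is the main obstacle. Working in a chart $F_{\overline{x}}$ near $\overline{x}\in D_I^{\Int}$ with $I = (1,\dots,m)$, transverse coordinates $z^1,\dots,z^m$ and divisor coordinates $w$, the standard asymptotics keep the $w$-components of $\omega_X(0)$ bounded and convergent to $\omega_{D_I^{\Int}}$, while the leading transverse entry is $2c_i|z^i|^{-2}$. The essential structural point --- and the reason the curve-fibration hypothesis is needed --- is that each $F_{h_i}$ is pulled back from the curve $C_i$ and hence involves only $dz^i\wedge d\overline{z}^i$; thus the logarithmically growing terms $2c_i(f_i^*\log|\sigma_i|_{h_i}^{-2})\,F_{h_i}$ arising in $\sqrt{-1}\partial\overline{\partial}(c_i f_i^*\log^2|\sigma_i|_{h_i}^{-2})$ contribute only to the $z^i\overline{z}^i$ entry, where they are dominated by the $|z^i|^{-2}$ term, and leave the divisor directions bounded. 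A determinant computation then shows that $\omega_X(0)^n/\Omega$ equals $\prod_{i=1}^k|z^i|^{-2}$ times a factor bounded between two positive constants, uniformly over the compact divisor $D$. Since $f_i^*\log|\sigma_i|_{h_i}^{-2} = -\log|z^i|^2 + O(1)$ near $D_i$, the estimate follows near $D$; away from $D$ both sides are smooth and their difference is bounded on compact subsets of $X$.

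With the Ricci decomposition in hand, substituting it and the superstandard form~(\ref{3.20}) of $\omega_X(0)$ into the flow representation yields
\begin{equation}
\omega_X(t) = (\eta_{\overline{X}} - t\rho_{\overline{X}}) + \sqrt{-1}\partial\overline{\partial}\left(\sum_{i=1}^k c_i f_i^*\log^2|\sigma_i|_{h_i}^{-2} + H_0 + tB + u\right),
\end{equation}
where $H_0$ is the bounded potential from~(\ref{3.20}) for $\omega_X(0)$. Here $\eta_{\overline{X}} - t\rho_{\overline{X}}$ is a smooth closed $(1,1)$-form on $\overline{X}$, the coefficients $c_i$ are unchanged, and $H_0 + tB + u \in C^\infty(X)\cap L^\infty(X)$. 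Combined with the standard asymptotics from Proposition~\ref{3.15}, this exhibits $\omega_X(t)$ in the form of Definition~\ref{3.19} associated to the same $\{h_i\}_{i=1}^k$, as desired.
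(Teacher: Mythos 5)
Your proposal is correct and follows essentially the same route as the paper: the paper also decomposes $\Ric(\omega_X(0))$ as a smooth closed $(1,1)$-form on $\overline{X}$ (there written as $-\sqrt{-1}F(h_{K_{\overline{X}}\otimes L_D})$, which is your $\Ric(\Omega)-\sum_i f_i^*F_{h_i}$ in line-bundle language) minus $\sqrt{-1}\partial\overline{\partial}$ of a bounded function coming from the log of the volume ratio, and then substitutes this together with the bounded potential $u(t)$ from Proposition \ref{2.4} into $\omega_X(t)=\omega_t+\sqrt{-1}\partial\overline{\partial}u(t)$. Your volume-ratio estimate is exactly the step the paper compresses into the one-line assertion that $H'\in C^\infty(X)\cap L^\infty(X)$ follows from the cylindrical standard spatial asymptotics.
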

\begin{proof}
Choose a Hermitian metric $h_{K_{\overline{X}} \otimes L_D}$
on $K_{\overline{X}} \otimes L_D$.
Along with $\{h_i\}_{i=1}^k$, we obtain a Hermitian metric 
$h_{K_{\overline{X}}}$ on $K_{\overline{X}}$.
Then
\begin{equation} \label{3.24}
\Ric (\omega_X(0))  =  - \sqrt{-1} F \left( 
h_{K_{\overline{X}} \otimes L_D} \right)
 - \sqrt{-1} \partial \overline{\partial} \left(
\log
\frac{
h_{K_{\overline{X}}} \prod_{i=1}^k f_i^* |\sigma_i|_{h_i}^2
}{
h_{K_X}
} \right) 
\end{equation}
on $X$. 

Put $\eta^\prime_{\overline{X}} = - \sqrt{-1} F \left( 
h_{K_{\overline{X}} \otimes L_D} \right)$ and $H^\prime =
\log \frac{
h_{K_{\overline{X}}} \prod_{i=1}^k f_i^* |\sigma_i|_{h_i}^2
}{
h_{K_X}
}$. By the  cylindrical
standard spatial asymptotics, $H^\prime \in C^\infty(X) \cap
L^\infty(X)$.

Recall the definition of $\omega_t$ from (\ref{2.2}). We can write
\begin{align} \label{3.25}
\omega_X(t) \: = \: & \omega_t + \sqrt{-1} \partial \overline{\partial} u(t) \\
\: = \: & \eta_{\overline{X}} - t \eta^\prime_{\overline{X}} + \sqrt{-1}
\partial \overline{\partial} \left( \sum_{i=1}^k c_i
f_i^* \log^2 \left| \sigma_i \right|_{h_i}^{-2} +
H + t H^\prime + u(t) \right). \notag
\end{align}
Since $u(t) \in C^\infty(X) \cap L^\infty(X)$, the proposition follows.
\end{proof}

If $\omega_X(0)$ has cylindrical superstandard spatial asymptotics then
we would like to give a characterization of the first singularity
time, if there is one, by making Theorem \ref{2.5} more explicit.
For the ``cuspidal'' asymptotics considered in
\cite{lz-duke}, the manifold $(X, \omega_X(0))$ had finite volume.
Transplanting $\omega_X(0)$ to $\overline{X}$, we obtained a
closed $(1,1)$-current, which represented a cohomology class on
$\overline{X}$. In the present case, $(X, \omega_X(0))$ has
infinite volume and we cannot directly obtain a cohomology class
on $\overline{X}$. However, we can subtract the leading singularity,
which is $\sqrt{-1} \partial \overline{\partial}$ of a function,
and thereby define a renormalized cohomology class in $\overline{X}$.

With reference to Definition \ref{3.19}, since $H$ is a smooth bounded
function on $X$, it extends by zero to an integrable function on
$\overline{X}$. Then $\sqrt{-1} \partial \overline{\partial} H$ is
a closed $(1,1)$-current on $\overline{X}$ (which is cohomologically
trivial). Hence the form on $X$ given by 
$\omega_X(0) -
\sqrt{-1} \partial \overline{\partial} \left( \sum_{i=1}^k c_i 
f_i^* \log^2 \left| \sigma_i \right|_{h_i}^{-2} \right)$, which
equals $\eta_{\overline{X}} + \sqrt{-1} \partial \overline{\partial} H$,
has a natural extension to a closed $(1,1)$-current on $\overline{X}$.

The relevant ring of functions, for cylindrical asymptotics, can
be characterized in the following way.

\begin{defin} \label{3.26}
The ring $C^\infty_{\cyl}(X)$ consists of the smooth functions $f$ on
$X = \overline{X} - D$ so that for every $\overline{x} \in D$ and
every local parametrization $F_{\overline{x}}$, the pullback
$F_{\overline{x}}^* f \in C^\infty((\Delta^*)^m \times \Delta^{n-m})$
has the property that for any multi-index $(l_1, \overline{l}_1, \ldots,
l_{n}, \overline{l}_{n})$, the function
\begin{align} \label{3.27}
& \left(z^1  \frac{\partial}{\partial z^1} \right)^{l_1}
\left( \overline{z}^1 \frac{\partial}{\partial \overline{z}^1} 
\right)^{\overline{l}_1}
\ldots
\left(z^m  \frac{\partial}{\partial z^m} \right)^{l_m}
\left( \overline{z}^m \frac{\partial}{\partial \overline{z}^m} 
\right)^{\overline{l}_m} \\
& 
\left( \frac{\partial}{\partial w^1} \right)^{l_{m+1}}
\left( \frac{\partial}{\partial \overline{w}^1} 
\right)^{\overline{l}_{m+1}}
\ldots
\left( \frac{\partial}{\partial w^{n-m}} \right)^{l_n}
\left( \frac{\partial}{\partial \overline{w}^{n-m}} 
\right)^{\overline{l}_n}
F_{\overline{x}}^* f \notag
\end{align}
is uniformly bounded.
\end{defin}

\begin{proposition} \label{3.28}
Suppose that $\omega_X(0)$ has  cylindrical
superstandard spatial asymptotics
associated to $\{h_i\}_{i=1}^k$.
Let $\eta_{\overline{X}} \in \Omega^{(1,1)}(\overline{X})$ 
be a smooth representative of the cohomology class represented by the
closed current
\begin{equation} \label{3.29}
\omega_X(0) -
\sqrt{-1} \partial \overline{\partial} \left( \sum_{i=1}^k c_i 
f_i^* \log^2 \left| \sigma_i \right|_{h_i}^{-2} \right)
\end{equation}
on $\overline{X}$.
Let $\eta^\prime_{\overline{X}} \in \Omega^{(1,1)}(\overline{X})$ be
a smooth representative of 
$- 2 \pi [K_{\overline{X}} + D] \in \HH^{(1,1)}(\overline{X})$.
Let $T_3$ be the supremum (possibly infinite) of the numbers $T^\prime$
for which there is some
$f_{T^\prime} \in C^\infty_{\cyl}(X)$ so that
\begin{equation} \label{3.30}
\eta_{\overline{X}} - T^\prime \eta^\prime_{\overline{X}} + 
\sqrt{-1} \partial \overline{\partial} \left( \sum_{i=1}^k c_i 
f_i^* \log^2 \left| \sigma_i \right|_{h_i}^{-2} + f_{T^\prime} \right)
\end{equation}
is a K\"ahler form on $X$ which is biLipschitz to $\omega_X(0)$.
Then $T_3$ equals the numbers $T_1 = T_2$ of Theorem \ref{2.5}.
\end{proposition}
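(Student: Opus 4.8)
The plan is to prove that $T_3 = T_2$; since Theorem \ref{2.5} gives $T_1 = T_2$, this yields $T_3 = T_1 = T_2$. The strategy is to translate the condition defining $T_3$ verbatim into the condition defining $T_2$, the only genuine analytic input being a dictionary between the two notions of regularity ($C^\infty_{\cyl}(X)$ versus bounded $\omega_0$-covariant derivatives).

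First I would record the cohomological bookkeeping. Write $\eta_{\overline{X}}^{\circ}$ for the smooth closed $(1,1)$-form of Definition \ref{3.19} and $\eta_{\overline{X}}^{\prime\circ} = - \sqrt{-1}\, F(h_{K_{\overline{X}} \otimes L_D})$ for the form appearing in (\ref{3.24}). As explained just before Definition \ref{3.26}, the current (\ref{3.29}) equals $\eta_{\overline{X}}^{\circ} + \sqrt{-1}\, \partial \overline{\partial} H$ and is cohomologous to $\eta_{\overline{X}}^{\circ}$, so the representative $\eta_{\overline{X}}$ of the statement and $\eta_{\overline{X}}^{\circ}$ are cohomologous smooth forms on the compact K\"ahler manifold $\overline{X}$ (K\"ahler because otherwise, by Proposition \ref{3.14}, no metric with the given asymptotics would exist). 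By the $\partial \overline{\partial}$-lemma, $\eta_{\overline{X}} = \eta_{\overline{X}}^{\circ} + \sqrt{-1}\, \partial \overline{\partial} \psi$ for some $\psi \in C^\infty(\overline{X})$. Likewise $\eta'_{\overline{X}}$ and $\eta_{\overline{X}}^{\prime\circ}$ both represent $- 2\pi [K_{\overline{X}} + D]$, so $\eta'_{\overline{X}} = \eta_{\overline{X}}^{\prime\circ} + \sqrt{-1}\, \partial \overline{\partial} \psi'$ with $\psi' \in C^\infty(\overline{X})$.

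Next I would compute. Combining (\ref{2.2}), (\ref{3.24}) and (\ref{3.25}) gives
\[
\omega_{T'} = \eta_{\overline{X}}^{\circ} - T' \eta_{\overline{X}}^{\prime\circ} + \sqrt{-1}\, \partial \overline{\partial} \Bigl( \sum_{i=1}^k c_i f_i^* \log^2 |\sigma_i|_{h_i}^{-2} + H - T' H' \Bigr).
\]
Substituting the two relations of the previous paragraph into (\ref{3.30}), the expression (\ref{3.30}) equals
\[
\omega_{T'} + \sqrt{-1}\, \partial \overline{\partial} F_{T'}, \qquad F_{T'} = f_{T'} + \psi - T' \psi' - H + T' H'.
\]
In particular (\ref{3.30}) and $\omega_{T'} + \sqrt{-1}\, \partial \overline{\partial} F_{T'}$ are literally the same form, so the requirement in the definition of $T_3$ that (\ref{3.30}) be a K\"ahler form biLipschitz to $\omega_X(0) = \omega_0$ is identical to the requirement in the definition of $T_2$ that $\omega_{T'} + \sqrt{-1}\, \partial \overline{\partial} F_{T'}$ be K\"ahler and biLipschitz to $\omega_0$. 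It remains only to match the regularity conditions, i.e. to show that $f_{T'} \in C^\infty_{\cyl}(X)$ if and only if $F_{T'} \in C^\infty(X)$ has all $\omega_0$-covariant derivatives uniformly bounded.

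The crux is therefore the following dictionary, which I would prove by induction on the order of differentiation: a function $f \in C^\infty(X)$ has all covariant derivatives with respect to $\omega_0$ uniformly bounded if and only if $f \in C^\infty_{\cyl}(X)$. This rests on the fact that $\omega_0$ has bounded geometry in the cylindrical coordinates near $D$: there $\omega_0$ is uniformly equivalent to the model $\sum_{i} 2 c_i \sqrt{-1}\, dz^i \wedge d\overline{z}^i / |z^i|^2 + \omega_{D_I^{\Int}}$, the vector fields $z^i \partial_{z^i}, \overline{z}^i \partial_{\overline{z}^i}, \partial_{w^j}, \partial_{\overline{w}^j}$ form a frame that is bounded and nondegenerate for $\omega_0$, and all iterated Christoffel symbols in this frame are uniformly bounded; hence the $\omega_0$-covariant derivatives and the coordinate derivatives (\ref{3.27}) control one another to every order. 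Granting this dictionary, and noting that $\psi, \psi'$ (smooth on $\overline{X}$, so with all cylindrical derivatives bounded) and $H, H'$ (which lie in $C^\infty_{\cyl}(X)$ by the cylindrical standard asymptotics together with the bounded geometry of $\omega_0$ and the structure (\ref{3.24}) of its Ricci form) all belong to the ring $C^\infty_{\cyl}(X)$, the correspondence $f_{T'} \leftrightarrow F_{T'}$ becomes a bijection between $C^\infty_{\cyl}(X)$ and the space of functions with bounded $\omega_0$-covariant derivatives. Consequently the sets of admissible $T'$ defining $T_3$ and $T_2$ coincide, giving $T_3 = T_2 = T_1$. The main obstacle is establishing the dictionary, namely the bounded-geometry comparison of covariant and cylindrical derivatives near $D$; everything else is cohomological rearrangement.
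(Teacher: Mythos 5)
Your proposal is correct in outline and follows the same skeleton as the paper's proof: absorb the choice of representatives $\eta_{\overline{X}}, \eta^\prime_{\overline{X}}$ by adding $\sqrt{-1} \partial \overline{\partial}$ of smooth functions on $\overline{X}$, rewrite (\ref{3.30}) as $\omega_{T^\prime} + \sqrt{-1} \partial \overline{\partial} F_{T^\prime}$, and match the two regularity conditions. (A trivial bookkeeping slip: $\omega_{T^\prime} = \omega_0 - T^\prime \Ric(\omega_0)$ carries $+T^\prime H^\prime$, not $-T^\prime H^\prime$, in its potential, as in (\ref{3.32}); this changes nothing.) The genuine difference is where the analytic work sits. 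You reduce everything to a two-way ``dictionary'' between $C^\infty_{\cyl}(X)$ and the class of functions with bounded $\omega_X(0)$-covariant derivatives, and you then need $\psi, \psi^\prime, H, H^\prime$ (and, in the converse direction, $u(T^\prime)$) to lie in $C^\infty_{\cyl}(X)$ so that $f_{T^\prime} \mapsto F_{T^\prime}$ is a bijection of regularity classes. The paper instead extracts the derivative bounds on $F_{T^\prime}$ \emph{a posteriori} from the biLipschitz hypothesis: since $\sqrt{-1} \partial \overline{\partial} F_{T^\prime}$ is pinched between $\pm \const \, \omega_X(0)$, the Laplacian $\triangle_{\omega_X(0)} F_{T^\prime}$ is bounded, and elliptic regularity in the bounded geometry of $\omega_X(0)$ yields all covariant derivative bounds; the dictionary is then invoked only in the direction ``bounded covariant derivatives implies $C^\infty_{\cyl}$.'' What the paper's route buys is that it never needs $H$ or $H^\prime$ to be better than $C^\infty(X) \cap L^\infty(X)$, which is all Definition \ref{3.19} supplies. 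In your route, the parenthetical assertion that $H, H^\prime \in C^\infty_{\cyl}(X)$ is the one step that is not free: it does not follow formally from the definitions, and proving it is essentially the same elliptic-regularity-in-bounded-geometry argument (one has $\sqrt{-1} \partial \overline{\partial} H$ bounded against $\omega_X(0)$ by the standard asymptotics and $H$ bounded, hence all cylindrical derivatives bounded). So you have correctly identified the crux --- the bounded-geometry comparison of cylindrical and covariant derivatives --- but as written some of the burden has been relocated into an aside rather than discharged; once you prove the comparison you sketch and apply it also to $H$, $H^\prime$ and $u(T^\prime)$, your argument closes and is equivalent to the paper's.
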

\begin{proof}
Let $\omega_X(0)$ and $T^\prime$ be as in the statement of the
proposition.  Since the present $\eta_{\overline{X}}$ and the
$\eta_{\overline{X}}$ of Definition \ref{3.19} differ by 
$\sqrt{-1} \partial \overline{\partial}$ of a smooth function on
$\overline{X}$, we can still write
\begin{equation} \label{3.31}
\omega_X(0) = \eta_{\overline{X}} +
\sqrt{-1} \partial \overline{\partial} \left( \sum_{i=1}^k c_i 
f_i^* \log^2 \left| \sigma_i \right|_{h_i}^{-2} +
H \right)
\end{equation}
for some $H \in C^\infty(X) \cap L^\infty(X)$. With reference to the
proof of Proposition \ref{3.23}, as the present
$\eta^\prime_{\overline{X}}$ differs from 
$- \sqrt{-1} F \left( 
h_{K_{\overline{X}} \otimes L_D} \right)$ by
$\sqrt{-1} \partial \overline{\partial}$ of a smooth function on
$\overline{X}$, we can still write 
\begin{equation} \label{3.32}
\omega_{T^\prime}
= \eta_{\overline{X}} - T^\prime \eta^\prime_{\overline{X}} + \sqrt{-1}
\partial \overline{\partial} \left( \sum_{i=1}^k c_i
f_i^* \log^2 \left| \sigma_i \right|_{h_i}^{-2} +
H + T^\prime H^\prime \right)
\end{equation}
for some $H^\prime \in C^\infty(X) \cap L^\infty(X)$.
Then
\begin{equation} \label{3.33}
 \eta_{\overline{X}} - T^\prime \eta^\prime_{\overline{X}} + \sqrt{-1}
\partial \overline{\partial} \left( \sum_{i=1}^k c_i
f_i^* \log^2 \left| \sigma_i \right|_{h_i}^{-2} + f_{T^\prime}  \right) =
\omega_{T^\prime}
+  \sqrt{-1}
\partial \overline{\partial} \left(
 f_{T^\prime} - H - T^\prime H^\prime \right).
\end{equation}
Put $F_{T^\prime} =  f_{T^\prime} -
H - T^\prime H^\prime$. By assumption, the left-hand side of
(\ref{3.33}), and hence also 
$\omega_{T^\prime}
+  \sqrt{-1}
\partial \overline{\partial} 
F_{T^\prime}$, is a K\"ahler metric
which is biLipschitz to $\omega_X(0)$.
Since $\omega_X(0)$ has  cylindrical
standard spatial asymptotics, it follows that
\begin{equation} \label{3.34}
-  \const \omega_X(0) \le 
\sqrt{-1}
\partial \overline{\partial} 
F_{T^\prime} \le \const \omega_X(0).
\end{equation}
Hence 
\begin{equation} \label{3.35}
| \triangle_{\omega_X(0)} F_{T^\prime} | \le \const
\end{equation}
Since $\omega_X(0)$ has bounded geometry (including a positive injectivity
radius), elliptic regularity implies that for each $k$, the
$k$th covariant derivatives of $F_{T^\prime}$ (with respect to 
$\omega_X(0)$) are uniformly bounded. 
It follows that $T_3 \le T_2$.

Now suppose that $T^\prime$ is as in the definition of $T_1$ in Theorem
\ref{2.5}.
As $\omega_X(0)$ has  cylindrical
superstandard spatial asymptotics, we can write
\begin{equation} \label{3.36}
\omega_{T^\prime} + \sqrt{-1}
\partial \overline{\partial} u(T^\prime)
= \eta_{\overline{X}} - T^\prime \eta^\prime_{\overline{X}} + \sqrt{-1}
\partial \overline{\partial} \left( \sum_{i=1}^k c_i
f_i^* \log^2 \left| \sigma_i \right|_{h_i}^{-2} +
H + T^\prime H^\prime + u(T^\prime) \right)
\end{equation}
for some $H, H^\prime \in C^\infty(X) \cap L^\infty(X)$.
Put $f_{T^\prime} = H + T^\prime H^\prime + u(T^\prime)$, so
\begin{equation} \label{3.37}
\eta_{\overline{X}} - T^\prime \eta^\prime_{\overline{X}} + \sqrt{-1}
\partial \overline{\partial} \left( \sum_{i=1}^k c_i
f_i^* \log^2 \left| \sigma_i \right|_{h_i}^{-2} +
f_{T^\prime} \right) =
\omega_{T^\prime} + \sqrt{-1}
\partial \overline{\partial} u(T^\prime).
\end{equation}
From Proposition \ref{3.23},
$\omega_{T^\prime} +  \sqrt{-1} \partial \overline{\partial} 
u(T^\prime)$ has  cylindrical
superstandard spatial asymptotics.
As before, using elliptic regularity we conclude that for each 
$k$, the $k$th covariant derivatives of 
$f_{T^\prime}$ (with respect to $\omega_X(0)$) are uniformly 
bounded.  This is equivalent to saying that $f_{T^\prime} \in 
C^\infty_{\cyl}(X)$.

Thus $T_1 \le T_3$. This proves the proposition.
\end{proof}

\begin{corollary} \label{3.38}
Suppose that $\omega_X(0)$ has cylindrical superstandard spatial
asymptotics.  If $[K_{\overline{X}} + D] \ge 0$ then the flow exists
for all positive time.
\end{corollary}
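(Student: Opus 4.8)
The plan is to invoke Proposition \ref{3.28}, which identifies the first singularity time $T_1 = T_2$ of Theorem \ref{2.5} with the cohomological quantity $T_3$; it therefore suffices to prove $T_3 = \infty$. Recall that $T_3$ is the supremum of those $T^\prime$ for which the renormalized class $[\eta_{\overline{X}}] - T^\prime[\eta^\prime_{\overline{X}}] = [\eta_{\overline{X}}] + 2\pi T^\prime[K_{\overline{X}} + D]$ is represented by a form as in (\ref{3.30}) that is K\"ahler on $X$ and biLipschitz to $\omega_X(0)$. The hypothesis $[K_{\overline{X}} + D] \ge 0$ says precisely that we are moving the renormalized K\"ahler class in a semipositive direction, so we should never be able to leave the admissible cone; the argument is to make this monotonicity precise.

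First I would exploit the freedom in the representative $\eta^\prime_{\overline{X}}$. As observed at the start of the proof of Proposition \ref{3.28}, altering $\eta^\prime_{\overline{X}}$ by $\sqrt{-1}\partial\overline{\partial}$ of a smooth function on $\overline{X}$ changes nothing, since the difference is absorbed into $f_{T^\prime}$ (and the restriction to $X$ of a smooth function on $\overline{X}$ lies in $C^\infty_{\cyl}(X)$, as each operator $z^i \partial/\partial z^i$ applied to such a function stays bounded). Using $[K_{\overline{X}} + D] \ge 0$, I would choose a smooth semipositive representative $\theta \ge 0$ of $[K_{\overline{X}} + D]$ and set $\eta^\prime_{\overline{X}} = - 2\pi\theta$, so that $- \eta^\prime_{\overline{X}} = 2\pi\theta \ge 0$.

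Next comes the monotonicity step. Since $T_3 = T_1 > 0$ by short-time existence of the flow, fix any $T_0^\prime \in (0, T_3)$ together with a witness $f_{T_0^\prime} \in C^\infty_{\cyl}(X)$ for which
\begin{equation*}
\Omega_0 := \eta_{\overline{X}} - T_0^\prime \eta^\prime_{\overline{X}} + \sqrt{-1}\partial\overline{\partial}\left( \sum_{i=1}^k c_i f_i^* \log^2 |\sigma_i|_{h_i}^{-2} + f_{T_0^\prime} \right)
\end{equation*}
is K\"ahler and biLipschitz to $\omega_X(0)$; such data exist by the very definition of $T_3$. For any $T^\prime \ge T_0^\prime$ I would then reuse the same potential, taking $f_{T^\prime} = f_{T_0^\prime}$. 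With this choice the form (\ref{3.30}) at time $T^\prime$ equals $\Omega_0 - (T^\prime - T_0^\prime)\eta^\prime_{\overline{X}} = \Omega_0 + 2\pi(T^\prime - T_0^\prime)\theta \ge \Omega_0 > 0$, hence is K\"ahler. The lower biLipschitz bound is then immediate, with the same constant as for $\Omega_0$. For the upper bound I would compare $\theta$, a smooth form on the compact manifold $\overline{X}$, with the complete metric $\omega_X(0)$ on $X$: the cylindrical standard asymptotics (\ref{3.2}) force $\omega_X(0)$ to blow up like $\sum_{i \in I} |z_i|^{-2}\, dz^i \wedge d\overline{z}^i$ as one approaches $D$, whereas $\theta$ stays bounded in any fixed smooth metric on $\overline{X}$. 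Covering $\overline{X}$ by finitely many charts and using these asymptotics near $D$ together with compactness away from $D$ yields a uniform bound $\theta \le \const\, \omega_X(0)$ on all of $X$, so $\Omega_0 + 2\pi(T^\prime - T_0^\prime)\theta \le \const\, \omega_X(0)$. Thus (\ref{3.30}) is admissible for every $T^\prime$, whence $T_3 = \infty$ and, by Proposition \ref{3.28}, $T_1 = T_2 = \infty$.

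The one delicate point, which I expect to be the main (if mild) obstacle, is this last uniform biLipschitz domination $\theta \le \const\, \omega_X(0)$ across the noncompact end of $X$ rather than merely on compact subsets; this is exactly where the precise shape of the cylindrical asymptotics is used. Should $[K_{\overline{X}} + D]$ only be assumed nef instead of semipositive, the same scheme survives after replacing $\theta$ by a representative $\theta_\epsilon \ge - \epsilon\, \omega_{\overline{X}}$ and, for each fixed finite $T^\prime$, choosing $\epsilon$ small enough that $\Omega_0 + 2\pi(T^\prime - T_0^\prime)\theta_\epsilon \ge \tfrac12\, \Omega_0$ still holds on the compact part; this again forces $T_3 \ge T^\prime$ for every $T^\prime$.
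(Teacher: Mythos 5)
Your proposal is correct and follows essentially the same route as the paper: choose a semipositive representative $-\eta^\prime_{\overline{X}} \ge 0$, fix one admissible potential coming from $T_3 > 0$, and observe that adding the nonnegative form $-T^\prime \eta^\prime_{\overline{X}}$ preserves both positivity and the biLipschitz comparison with $\omega_X(0)$, so $T_3 = \infty$. The only difference is cosmetic (you anchor at a general $T_0^\prime \in (0,T_3)$ rather than at $T^\prime = 0$, and you spell out the upper biLipschitz bound $\theta \le \const\,\omega_X(0)$ that the paper leaves implicit).
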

\begin{proof}
With reference to Proposition \ref{3.28},
we can choose $- \eta_{\overline{X}}^\prime$ to be a nonnegative closed
$(1,1)$-form. Since $T_3 > 0$, there is some $f_0 \in C^\infty_{\cyl}(X)$
so that 
$\eta_{\overline{X}} + 
\sqrt{-1} \partial \overline{\partial} \left( \sum_{i=1}^k c_i 
f_i^* \log^2 \left| \sigma_i \right|_{h_i}^{-2} + f_{0} \right)$
is a K\"ahler form on $X$ which is biLipschitz to $\omega_X(0)$.
Then for any $T^\prime > 0$, 
$\eta_{\overline{X}} - T^\prime \eta^\prime_{\overline{X}} + 
\sqrt{-1} \partial \overline{\partial} \left( \sum_{i=1}^k c_i 
f_i^* \log^2 \left| \sigma_i \right|_{h_i}^{-2} + f_{0} \right)$
is also a K\"ahler form on $X$ which is biLipschitz to $\omega_X(0)$.
Using Proposition \ref{3.28}, this proves the corollary.
\end{proof}

\begin{remark}
Proposition \ref{3.28} is only partly a statement about
$\overline{X}$, since the definition of $T_3$ is a statement
about K\"ahler metrics on $X$ with certain properties.  In this
sense, the proposition is not as definitive as the corresponding
statement about cuspidal asymptotics in \cite[Theorem 8.19]{lz-duke}.
\end{remark}

\section{Bulging K\"ahler-Ricci flows} \label{section4}

This section deals with bulging spatial asymptotics. In
Subsection \ref{subsection4.1} we define the notion of bulging standard
spatial asymptotics, and show that it is preserved under the
K\"ahler-Ricci flow, with no change in the divisor metric.
In Subsection \ref{subsection4.2} we consider parabolic rescalings around a
sequence of points that go to spatial infinity in the 
time-zero slice. Under a decay assumption on the Ricci curvature,
we show that the limit is a product flow which exhibits the
K\"ahler-Ricci flow on the divisor.

In Subsection \ref{subsection4.3} we assume that $D$ is ample.
We introduce bulging superstandard
spatial asymptotics. We show that if $\overline{X}$ admits a
K\"ahler metric then $X$ admits a metric with
bulging superstandard spatial asymptotics.
We prove that having bulging superstandard spatial
asymptotics is preserved under the K\"ahler-Ricci flow. 
We characterize the first singularity time, if there is one.

Let $\overline{X}$ be a compact connected $n$-dimensional complex manifold.
Let $D$ be a smooth effective divisor in $\overline{X}$.
Let $L_D$ be the holomorphic line bundle on $\overline{X}$ associated
to $D$. There is a holomorphic section $\sigma$ of $L_D$ with zero set
$D$, which is nondegenerate at $D$.  It is unique up to multiplication
by a nonzero complex number.

\subsection{Bulging standard spatial asymptotics} \label{subsection4.1}

Given $\overline{x} \in D$,
there are a neighborhood
$U$ of $\overline{x}$ in $\overline{X}$ and
a biholomorphic map
$F_{\overline{x}} : \Delta^n \rightarrow U$
so that
\begin{enumerate} [\upshape (i)]
\item $F_{\overline{x}}(0) = \overline{x}$, and
\item $F_{\overline{x}} \left( \Delta^* \times
\Delta^{n-1} \right) = U \cap X$.
\end{enumerate}

The map $G_{\overline{x}}$ on
$\Delta^{n-1}$, given by $G_{\overline{x}}(w) = 
F_{\overline{x}}(0,w)$, is a biholomorphic map from $\Delta^{n-1}$
to a neighborhood of $\overline{x}$ in $D$.
Let $z$ be the local coordinate on $U$ coresponding to the first factor
in $\Delta^n$.

Using the covering map $H \rightarrow \Delta^*$, given by
$u \rightarrow e^{\sqrt{-1} u}$, 
let $\widetilde{F}_{\overline{x}} : H \times \Delta^{n-1} \rightarrow U \cap X$
be the lift of $F_{\overline{x}} \Big|_{\Delta^* \times \Delta^{n-1}}$.

Given $r \in \R^+$, let $\alpha_{r} : \C \rightarrow \C$ be the
map $\alpha_r(u) = ru + \sqrt{-1} \frac{r^2}{2}$.
If $Z$ is an auxiliary space
then we also write $\alpha_{r}$ for $(\alpha_{r}, \Id) : 
\C \times Z \rightarrow \C \times Z$.

\begin{defin} \label{5.1}
Let $\omega_D$ be a K\"ahler metric
on $D$. Given $N > 0$,
$\omega_X$ has {\em bulging standard spatial asymptotics} associated
to $(\omega_D, N)$ if for
every $\overline{x} \in D$ and every local parametrization
$F_{\overline{x}}$,
\begin{equation} \label{5.2}
\lim_{r \rightarrow \infty} 
r^{- \frac{2}{N}}
\alpha_{r}^* 
\widetilde{F}_{\overline{x}}^* \omega_X =
\frac12 \left( \frac{N+1}{N} \right)^2
\left(
\sqrt{-1} du \wedge d\overline{u} +
N G_{\overline{x}}^* \omega_D
\right).
\end{equation}
The limit in (\ref{5.2}) means smooth convergence on
any subset $\{z \in \C : |z| < S\} \times \Delta^{n-1}$
of $\C \times \Delta^{n-1}$. 
\end{defin}

\begin{remark}
Note that although $\widetilde{F}_{\overline{x}}$ is originally defined
on $H \times \Delta^{n-1}$, the limit is taken
around the basepoint $(0,0)$ in $\C \times \Delta^{n-1}$. This makes
sense because $\alpha_r(0) = \sqrt{-1} \frac{r^2}{2} \in H$.
\end{remark}

\begin{proposition}
If $\omega_X$ has bulging standard spatial asymptotics then to leading order,
as $z \rightarrow 0$, the metric in local coordinates has the form
\begin{equation} \label{5.3}
\omega_X \sim \frac12 \left( \frac{N+1}{N} \right)^2  
\left( \log |z|^{-2} \right)^{\frac{1}{N}} 
\left( \sqrt{-1} \frac{dz \wedge 
d\overline{z}}{|z|^2 \log |z|^{-2}} + N \omega_D \right).
\end{equation}
\end{proposition}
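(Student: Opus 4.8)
The plan is to read the proposition off from Definition \ref{5.1} by transcribing the rescaled limit (\ref{5.2}) from the $u$-coordinate on the upper half plane back into the transverse coordinate $z$. The crucial observation is the dictionary between the scale $r$ and $z$: the basepoint of the limit is $(0,0) \in \C \times \Delta^{n-1}$, and $\alpha_r(0) = \sqrt{-1}\frac{r^2}{2} \in H$, which under $u \mapsto e^{\sqrt{-1}u}$ corresponds to $z = e^{-r^2/2}$. Hence $|z|^2 = e^{-r^2}$, so that $\log|z|^{-2} = r^2$, equivalently $r = (\log|z|^{-2})^{1/2}$. First I would fix $\overline{x} \in D$, a local parametrization $F_{\overline{x}}$ with its lift $\widetilde{F}_{\overline{x}}$, and write $\Omega = \widetilde{F}_{\overline{x}}^* \omega_X$ in the coordinates $(u, w) \in H \times \Delta^{n-1}$.

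Next I would compute the effect of $\alpha_r$ on each block of $\Omega$. Since $\alpha_r$ acts by $u \mapsto ru + \sqrt{-1}\frac{r^2}{2}$ and by the identity on $w$, we have $\alpha_r^*(\sqrt{-1}\,du\wedge d\overline{u}) = r^2 \sqrt{-1}\,du \wedge d\overline{u}$, while the $w$-differentials are unchanged and the mixed $u\overline{w}$ differentials pick up a single factor of $r$. Evaluating (\ref{5.2}) at the basepoint and matching the $\sqrt{-1}\,du\wedge d\overline{u}$ block and the $\omega_D$ block of the two sides gives, to leading order as $r \to \infty$,
\[
\Omega_{u\overline u}\Big(\sqrt{-1}\frac{r^2}{2},0\Big) \sim \frac12\Big(\frac{N+1}{N}\Big)^2 r^{\frac{2}{N}-2}, \qquad \Omega_{w\overline w}\Big(\sqrt{-1}\frac{r^2}{2},0\Big) \sim \frac12\Big(\frac{N+1}{N}\Big)^2 N\,(\omega_D)_{w\overline w}\, r^{\frac{2}{N}},
\]
whereas the mixed block, scaling only like $r^{\frac2N - 1}$, is lower order and drops out of the leading expansion.

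Finally I would convert back to the $z$-coordinate. From $z = e^{\sqrt{-1}u}$ one has $du = -\sqrt{-1}\,dz/z$ and $d\overline u = \sqrt{-1}\,d\overline z/\overline z$, so that $\sqrt{-1}\,du\wedge d\overline u = \sqrt{-1}\,\frac{dz\wedge d\overline z}{|z|^2}$. Substituting $r^2 = \log|z|^{-2}$ into the coefficients above, so that $r^{2/N} = (\log|z|^{-2})^{1/N}$ and $r^{2/N-2} = (\log|z|^{-2})^{1/N}(\log|z|^{-2})^{-1}$, and recombining the two blocks, yields
\[
\omega_X \sim \frac12\Big(\frac{N+1}{N}\Big)^2 (\log|z|^{-2})^{\frac1N}\Big(\sqrt{-1}\,\frac{dz\wedge d\overline z}{|z|^2 \log|z|^{-2}} + N\omega_D\Big),
\]
which is (\ref{5.3}).

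The computation itself is routine; the points requiring care are the $r \leftrightarrow z$ dictionary $r^2 = \log|z|^{-2}$ and the bookkeeping of which block of $\Omega$ carries which power of $r$. The one genuinely conceptual issue is the meaning of ``to leading order as $z \to 0$'': the convergence in (\ref{5.2}) is only asserted on compact subsets of $\C \times \Delta^{n-1}$ and is here being evaluated at the single basepoint, which traces out the radial approach $z = e^{-r^2/2} \to 0$. I expect the main, and only minor, obstacle to be justifying that letting the basepoint range over a compact set, whose $\alpha_r$-image exhausts a full neighborhood of the relevant point of $D$, indeed upgrades this radial statement to the uniform leading asymptotic as $z \to 0$ recorded in (\ref{5.3}).
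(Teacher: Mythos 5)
Your proposal is correct and follows essentially the same route as the paper: the paper's proof is exactly the substitution $z = e^{\sqrt{-1}(ru + \sqrt{-1}r^2/2)}$ into the limit form of Definition \ref{5.1}, giving $\frac{dz}{z} = \sqrt{-1}\,r\,du$ and $\log|z|^{-2} = r^2 - \sqrt{-1}r(u-\overline{u}) \sim r^2$, then rewriting $r^{2/N}\bigl(\sqrt{-1}\,du\wedge d\overline{u} + N\omega_D\bigr)$ in the $z$-coordinate, which is your block-by-block computation written as a single form identity. The paper treats the statement as a formal leading-order coordinate computation and does not belabor the radial-versus-uniform point you raise at the end, so no further work is needed there.
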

\begin{proof}
Relating the local function $z$ on $U$ to the local function
$u$ on $\C$ by $\alpha_r^* \widetilde{F}_{\overline{x}}^*$, we have
\begin{equation} 
z = e^{\sqrt{-1} \left( ru + \sqrt{-1} \frac{r^2}{2} \right)},
\end{equation}
so 
\begin{equation}
\frac{dz}{z} = \sqrt{-1} \: r \: du
\end{equation}
and
\begin{equation}
\log |z|^{-2} = r^2 - \sqrt{-1} r (u - \overline{u}).
\end{equation}
Then
\begin{align}
& \frac12 \left( \frac{N+1}{N} \right)^2
r^{\frac{2}{N}}
\left(
\sqrt{-1} du \wedge d\overline{u} +
N G_{\overline{x}}^* \omega_D \right) = \\
& \frac12 \left( \frac{N+1}{N} \right)^2
r^{\frac{2}{N}}
\left(
\sqrt{-1} \frac{dz \wedge d\overline{z}}{r^2 |z|^2} +
N G_{\overline{x}}^* \omega_D \right) \sim \notag \\
& \frac12 \left( \frac{N+1}{N} \right)^2 (\log |z|^{-2})^{\frac{1}{N}}
\left(
\sqrt{-1} \frac{dz \wedge d\overline{z}}{|z|^2 \log |z|^{-2}} +
N \omega_D \right). \notag
\end{align}
This proves the proposition.
\end{proof}

One can check that 
the notion of bulging standard spatial asymptotics in
Definition \ref{5.1} is consistent under change of local coordinate.

Going out the end of $X$ corresponds to taking $z \rightarrow 0$.
The distance from a basepoint in $X$ is asymptotic to
\begin{equation} \label{5.4}
R \sim \left( \log |z|^{-2} \right)^{\frac{N+1}{2N}}.
\end{equation}
Fix $\overline{x} \in D$ and let $\theta$ be the angular
coordinate of $z$.
Then as $z \rightarrow 0$, i.e. as $R \rightarrow \infty$,
the metric on $X$ is asymptotic to
\begin{equation} \label{5.5}
g_X \sim dR^2 + \left( \frac{N+1}{N} \right)^2 
R^{-2 \frac{N-1}{N+1}} d\theta^2 + 
\frac{(N+1)^2}{2N} R^{\frac{2}{N+1}} g_D(\overline{x}).
\end{equation}
The sectional curvatures decay as
\begin{equation} \label{5.6}
|\Rm| = O \left( R^{- \frac{2}{N+1}} \right).
\end{equation}
For a given
$\overline{x} \in D$, as $z \rightarrow 0$,
the geometry comes closer and closer to having a product structure.

We now show that the property of having bulging standard spatial asymptotics
is preserved under the K\"ahler-Ricci flow, with no change in the
divisor metric.

\begin{proposition} \label{5.7}
Let $\omega_X(\cdot)$ be a K\"ahler-Ricci flow defined for $t \in [0, T)$,
with bounded curvature on compact time intervals.
Suppose that $\omega_X(0)$ has bulging
standard spatial asymptotics associated to
$(\omega_D, N)$. Then for all $t \in [0, T)$, the metric
$\omega_X(t)$ has bulging standard spatial asymptotics associated to
$(\omega_D, N)$. 
\end{proposition}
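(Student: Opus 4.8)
The plan is to parallel the strategy of Proposition \ref{3.15}, but to take advantage of the extra scaling factor $r^{-\frac{2}{N}}$ in Definition \ref{5.1}, which changes the endgame. Fix $\overline{x} \in D$, a local parametrization $\widetilde{F}_{\overline{x}}$, and a time $t_0 \in [0,T)$, and let $r_j \to \infty$ be an arbitrary sequence. Set
\begin{equation}
\omega_j(s) = r_j^{-\frac{2}{N}} \alpha_{r_j}^* \widetilde{F}_{\overline{x}}^* \omega_X \left( r_j^{\frac{2}{N}} s \right), \qquad s \in \left[ 0, r_j^{-\frac{2}{N}} T \right).
\end{equation}
Since $\alpha_{r_j}$ is biholomorphic and the Ricci form is invariant under constant rescaling, each $\omega_j(\cdot)$ is again a K\"ahler-Ricci flow; the factor $r_j^{\frac{2}{N}}$ in the time argument is exactly the parabolic rescaling forced by the metric rescaling $r_j^{-\frac{2}{N}}$. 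By hypothesis $\omega_j(0)$ converges, in $C^\infty_{loc}$ on $\C \times \Delta^{n-1}$, to the right-hand side of (\ref{5.2}). The essential observation is that the quantity appearing in Definition \ref{5.1} at time $t_0$ is precisely $\omega_j \left( r_j^{-\frac{2}{N}} t_0 \right)$, i.e.\ the rescaled flow evaluated at the rescaled time $r_j^{-\frac{2}{N}} t_0 \to 0$. The whole point is therefore to show that this vanishing elapsed time is too short to move the metric away from its time-zero limit; this is also the mechanism by which the divisor metric $\omega_D$ is \emph{not} evolved, in contrast with the cylindrical case.

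The main obstacle is to obtain curvature bounds for $\omega_j(s)$ that are uniform in $j$. Perelman's pseudolocality is unavailable because of the collapsing circle fibers, so I would instead use the local curvature estimate of Appendix \ref{localapp}, for which a uniform biLipschitz bound suffices. Such a bound is supplied by Proposition \ref{2.4}: on the fixed interval $[0, t_0]$ the flow $\omega_X(t)$ is biLipschitz-equivalent to $\omega_X(0)$ with a single constant $C(t_0)$. Because a biLipschitz bound is invariant under pullback and under constant rescaling, $\omega_j(s)$ is biLipschitz-equivalent to $\omega_j(0)$ with the \emph{same} constant $C(t_0)$, for every $j$ and every $s \in \left[ 0, r_j^{-\frac{2}{N}} t_0 \right]$ (which corresponds to original times in $[0,t_0]$). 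As $\omega_j(0)$ converges to (\ref{5.2}), it has uniformly bounded geometry on compact sets, so $\omega_j(s)$ is uniformly biLipschitz there to the fixed product metric. Appendix \ref{localapp} then yields $|\Rm(\omega_j(s))| \le C$ on compact subsets, uniformly in $j$ and in $s \in \left[ 0, r_j^{-\frac{2}{N}} t_0 \right]$, \emph{independently of the elapsed time}. Higher derivatives are handled as in Proposition \ref{3.15}: Shi's estimates give interior bounds for $s > 0$, while the uniform $C^\infty$ bounds on the initial data $\omega_j(0)$ feed into the local derivative estimate of \cite[Appendix D]{Kleiner-Lott} to produce uniform bounds on all covariant derivatives of the curvature, including up to $s = 0$.

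With these uniform $C^\infty$ bounds in hand I would conclude by integrating the flow equation in the fixed $(u,w)$-coordinates:
\begin{equation}
\omega_j \left( r_j^{-\frac{2}{N}} t_0 \right) - \omega_j(0) = - \int_0^{r_j^{-\frac{2}{N}} t_0} \Ric \left( \omega_j(s) \right) \, ds.
\end{equation}
The uniform bounds make each $\Ric(\omega_j(s))$ bounded in $C^k_{loc}$ uniformly in $s$, while the length $r_j^{-\frac{2}{N}} t_0$ of the interval tends to $0$; hence the integral tends to $0$ in $C^\infty_{loc}$. Combined with $\omega_j(0) \to$ (\ref{5.2}), this gives $\omega_j \left( r_j^{-\frac{2}{N}} t_0 \right) \to$ (\ref{5.2}) in $C^\infty_{loc}$. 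Since the sequence $r_j \to \infty$ was arbitrary, the limit (\ref{5.2}) holds for $\omega_X(t_0)$, and since $\overline{x} \in D$ and $t_0 \in [0,T)$ were arbitrary, $\omega_X(t)$ has bulging standard spatial asymptotics associated to $(\omega_D, N)$ for all $t \in [0,T)$. I expect the delicate point to be the uniformity of the curvature bound as $j \to \infty$; it is crucial that the biLipschitz constant $C(t_0)$ is fixed, which is precisely what confines us to the shrinking rescaled time interval and thereby freezes $\omega_D$.
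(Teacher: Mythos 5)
Your proof is correct, and it reaches the same two load-bearing facts as the paper's argument, but it packages them differently enough to be worth comparing. The paper keeps the time fixed and rescales only space: it first upgrades the initial spatial decay $|\nabla^k \Rm|(x,0) = O\bigl(d_0(x,x_0)^{-\frac{k+2}{N+1}}\bigr)$ to the same decay at all times $t \in [0,t_0]$ (via Proposition \ref{4.7} with an explicit distance-like function, or via Proposition \ref{B.15}), extracts a limit of $r_i^{-2/N}\alpha_{r_i}^*\widetilde{F}_{\overline{x}}^*\omega_X(t)$ by Hamilton compactness, and then identifies that limit with the time-zero limit because $\omega_X(t)$ and $\omega_X(0)$ are $e^{\const\, t\, d_0(x,x_0)^{-2/(N+1)}}$-biLipschitz, a constant tending to $1$ at spatial infinity. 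You instead rescale parabolically, so that the relevant time $r_j^{-2/N}t_0$ shrinks to zero, obtain uniform curvature bounds on the rescaled flows directly from the crude global biLipschitz bound of Proposition \ref{2.4} fed into Proposition \ref{A.1}/\ref{A.38}, and integrate the flow equation over the vanishing interval. Quantitatively these are the same estimate ($\int_0^{t_0}|\Ric|\,dt = O\bigl(t_0\, d_0(x,x_0)^{-2/(N+1)}\bigr)$ in unrescaled terms), and your route to the curvature bounds is essentially the proof of Proposition \ref{B.15} transported to the rescaled picture; what your presentation buys is that Appendix \ref{powerapp} need not be invoked as a separate statement, and the mechanism freezing $\omega_D$ (vanishing rescaled elapsed time) is made explicit. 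Two small points to tighten: for the $C^\infty_{\rm loc}$ statement that the integral of $\Ric(\omega_j(s))$ tends to zero you need coordinate (not just covariant) derivative bounds, which require controlling the Christoffel symbols of $\omega_j(s)$ relative to the fixed chart — this is exactly the bound $S \le C$ on the tensor $\Psi$ from Appendix \ref{localapp}, so it is available but should be cited; and when applying Proposition \ref{A.1} near the boundary of the $\Delta^{n-1}$-factor one should take the ball radius $r$ small and uniform over a fixed compact subset of $\C \times \Delta^{n-1}$ so that the time-zero balls have compact closure.
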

\begin{proof}
Suppose that the conclusion of the proposition is not true. Then there are some
$t \in [0, T)$, $\overline{x} \in D$, 
${F}_{\overline{x}} :
\Delta^* \times \Delta^{n-1} \rightarrow \overline{X}$, $S < \infty$ and 
$r_i \rightarrow \infty$ with the property that even 
after passing to any subsequence of the
$r_i$'s, 
the metrics 
$r_i^{- \frac{2}{N}}
\alpha_{r_i}^* 
\widetilde{F}_{\overline{x}}^* \omega_X(t)$ do not smoothly
converge to 
$\frac12 \left( \frac{N+1}{N} \right)^2
\left(
\sqrt{-1} du \wedge d\overline{u} +
N G_{\overline{x}}^* \omega_D
\right)$  on $\{z \in \C : |z| < S \} \times \Delta^{n-1}$.

From the bulging standard spatial asymptotics of $\omega_X(0)$, 
given $k \ge 0$, we have $|\nabla^k \Rm|(x,0) =
O \left( d_0(x,x_0)^{-\frac{k+2}{N+1}} \right)$. 
We now want to show that 
\begin{equation} \label{4.13.5}
|\nabla^k \Rm|(x,t) =
O \left( d_0(x,x_0)^{-\frac{k+2}{N+1}} \right).
\end{equation}
There are two arguments for this.  First, given a large
positive number $A$, put
\begin{equation}
\phi(x) = 
\left(A +  \log |\sigma(x)|^{-2} \right)^{\frac{N+1}{2N}}.
\end{equation}
The definition of $\phi$ is motivated by (\ref{5.4}).
Then $\phi$ is a distance-like function that
satisfies (\ref{4.3.5}).
Proposition \ref{4.7} implies that (\ref{4.13.5}) holds.

Alternatively, we can apply Proposition \ref{B.15} to see that 
(\ref{4.13.5}) holds. Either way, we obtain uniform bounds on the curvature of 
$r_i^{- \frac{2}{N}} \alpha_{r_i}^* 
\widetilde{F}_{\overline{x}}^* \omega_X(t)$ and its covariant derivatives,
when considered on any fixed $\{z \in \C : |z| \le S^\prime\} 
\times \Delta^{n-1}$,
as $i \rightarrow \infty$. 
After passing to a subsequence, we can assume that 
there is a Riemannian metric $\omega_\infty(t)$ on 
$\C \times \Delta^{n-1}$ so that the Riemannian
metrics
$\left\{ r_i^{- \frac{2}{N}} \alpha_{r_i}^* 
\widetilde{F}_{\overline{x}}^* \omega_X(t)
\right\}_{i=1}^\infty$
converge smoothly to $\omega_\infty(t)$ on each
$\{z \in \C : |z| < S^\prime\} \times \Delta^{n-1}$.

On the time interval $[0,t]$, the curvature of $X$ decays uniformly
as $O \left( d_0(x,x_0)^{- \frac{2}{N+1}} \right)$. It follows from the
Ricci flow equation that at a point $x \in X$,
the metrics $\omega_X(t)$ and $\omega_X(0)$ are
$e^{\const t  d_0(x,x_0)^{- \frac{2}{N+1}}}$-biLipschitz to each other.
Applying this to small neighborhoods of points
$x_i = \widetilde{F}_{\overline{x}}(\alpha_{r_i}(0,w)) =
F_{\overline{x}} \left( e^{- \frac{r_i^2}{2}}, w \right)$,
since $d_0(x_i, x_0) \rightarrow \infty$,
it follows that $\omega_\infty(t)$ is isometric to the
corresponding $\omega_\infty(0)$. The latter is the product metric 
$\frac12 \left( \frac{N+1}{N} \right)^2
\left( \sqrt{-1} du \wedge d\overline{u} + N 
G_{\overline{x}}^* \omega_D \right)$
on $\C \times \Delta^{n-1}$.
Hence as $i \rightarrow \infty$, the metrics
$r_i^{- \frac{2}{N}}
\alpha_{r_i}^* 
\widetilde{F}_{\overline{x}}^* \omega_X(t)$ smoothly converge to
$\frac12 \left( \frac{N+1}{N} \right)^2
\left(
\sqrt{-1} du \wedge d\overline{u} +
N G_{\overline{x}}^* \omega_D
\right)$ on $\{z \in \C : |z| < S\} \times \Delta^{n-1}$. 
This is a contradiction.
\end{proof}

\subsection{Parabolic rescaling of bulging asymptotics} \label{subsection4.2}

As mentioned in the introduction and seen in Proposition \ref{5.7},
if the initial metric has bulging standard spatial asymptotics then
the divisor flow does not enter into the asymptotics of the
K\"ahler-Ricci flow on $X$ for any finite time interval. In order
to see the divisor flow, we must rather do parabolic rescalings
around points $x_i$ that tend to spatial infinity in the initial time slice.
The sectional curvature at $x_i$ decays like
$O \left( d_0(x_i, x_0)^{- \frac{2}{N+1}} \right)$, so the relevant time
scale increases like $d_0(x_i, x_0)^{\frac{2}{N+1}}$.

In order to take a limit of the parabolic rescalings, we need uniform
curvature estimates on balls centered around $x_i$ of radius comparable
to $d_0(x_i, x_0)^{\frac{1}{N+1}}$, and on a time interval of length comparable
to $d_0(x_i, x_0)^{\frac{2}{N+1}}$. Such estimates would follow from
pseudolocality, if we could apply it.  Unfortunately, because of
the shrinking circle fiber, the injectivity radius on such
time-zero balls
goes to zero as $i \rightarrow \infty$ even before rescaling, and
even more so after rescaling.  The curvature estimates from pseudolocality 
can definitely fail in such a situation \cite[Section 4]{Lu}.

Because one cannot apply pseudolocality, it appears that one must
make some assumption to ensure the needed sectional curvature bounds.
There is some flexibility in the precise assumption made.  We
assume a decay of the Ricci curvature on the relevant spacetime
region and show that it implies the needed sectional curvature bound.
The Ricci curvature assumption implies a uniform biLipschitz bound
within the parabolic ball. Using Appendix \ref{localapp}, this gives the
sectional curvature bound.

\begin{proposition} \label{5.8}
Let $\omega_X(\cdot)$ be a K\"ahler-Ricci flow defined for $t \in [0,
\infty)$, with bounded curvature on compact time intervals.  Suppose that
$\omega_X(0)$ has bulging standard spatial asymptotics associated to
$(\omega_D(0), N)$. Let $\omega_D(\cdot)$ denote the Ricci flow on $D$.

Given $A < \infty$, 
suppose that 
\begin{equation} \label{5.9}
|\Ric(x,t)| = O \left( d_0(x,x_0)^{- \frac{2}{N+1}} \right)
\end{equation}
uniformly
on the spacetime region $\left\{ (x,t) \: : \: t \: \le \: A \: 
d_0(x,x_0)^{\frac{2}{N+1}} \right\}$. 
Given $\overline{x} \in D$,
${F}_{\overline{x}} :
\Delta^* \times \Delta^{n-1} \rightarrow \overline{X}$ and 
$r>0$, consider the Ricci flow
$\omega_r(t) = 
r^{- \frac{2}{N}}
\alpha_{r}^* 
\widetilde{F}_{\overline{x}}^* \omega_X(r^{\frac{2}{N}} t)$.
Then 
\begin{equation} \label{5.10}
\lim_{r \rightarrow \infty} \omega_r(\cdot) =  
\frac12 \left( \frac{N+1}{N} \right)^2
\left(
\sqrt{-1} du \wedge d\overline{u} +
N G_{\overline{x}}^* \omega_D(\cdot)
\right),
\end{equation}
with smooth convergence on the product of the
time interval $[0, A]$ with any  $\{z \in \C : |z| < S\} 
\times \Delta^{n-1}$.
\end{proposition}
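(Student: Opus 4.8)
The plan is to follow the strategy of Proposition \ref{5.7} and Proposition \ref{3.15}: extract a pointed Ricci flow limit of the parabolically rescaled flows $\omega_r(\cdot)$ and identify it by uniqueness. First I would note that parabolic rescaling of a K\"ahler-Ricci flow is again a K\"ahler-Ricci flow, so each $\omega_r(\cdot)$ is a K\"ahler-Ricci flow on a subset of $\C \times \Delta^{n-1}$, with $\omega_r(0) = r^{- \frac{2}{N}} \alpha_r^* \widetilde{F}_{\overline{x}}^* \omega_X(0)$. By the bulging standard spatial asymptotics of $\omega_X(0)$ (Definition \ref{5.1}), the time-zero metrics $\omega_r(0)$ converge smoothly to the product metric $\frac12 \left( \frac{N+1}{N} \right)^2 \left( \sqrt{-1} du \wedge d\overline{u} + N G_{\overline{x}}^* \omega_D(0) \right)$ on each $\{z \in \C : |z| < S^\prime\} \times \Delta^{n-1}$; in particular there is a uniform lower bound on the injectivity radius at the basepoint $(0,0)$.

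The essential step, and the main obstacle, is to obtain bounds on the curvature of $\omega_r(t)$ and its covariant derivatives on $[0,A] \times \left( \{z \in \C : |z| < S^\prime\} \times \Delta^{n-1} \right)$, uniformly in $r$. As explained in the preamble, pseudolocality is unavailable because of the collapsing circle fibers. Instead I would use assumption (\ref{5.9}): noting via (\ref{5.4}) that the points $x_i = F_{\overline{x}} \left( e^{- \frac{r^2}{2}}, w \right)$ satisfy $d_0(x_i,x_0) \sim r^{\frac{N+1}{N}}$, so that the rescaling factor $r^{- \frac{2}{N}}$ matches $d_0(x_i,x_0)^{- \frac{2}{N+1}}$ and the time $r^{\frac{2}{N}} t$ stays in the region $\left\{ t \le A \, d_0^{\frac{2}{N+1}} \right\}$ for $t \in [0,A]$. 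Integrating the Ricci flow equation and using $\int_0^{r^{2/N} A} |\Ric| \, ds = O(1)$, the bound (\ref{5.9}) yields a uniform biLipschitz comparison between $\omega_X(r^{2/N} t)$ and $\omega_X(0)$ near these points, hence a uniform biLipschitz bound for $\omega_r(t)$ against the bounded-geometry product metric. Appendix \ref{localapp} then converts this biLipschitz bound into a uniform bound on $|\Rm|$ for $\omega_r(t)$, independent of the elapsed time. Shi's local derivative estimates, as in \cite[Appendix D]{Kleiner-Lott}, together with the time-zero bounds $|\nabla^k \Rm|(x,0) = O \left( d_0^{- \frac{k+2}{N+1}} \right)$ coming from the initial bulging asymptotics, upgrade this to uniform bounds on all $|\nabla^k \Rm|$ on $[0,A]$.

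With the injectivity-radius lower bound and these curvature bounds in hand, I would apply Hamilton's compactness theorem \cite{Hamilton2} to pass to a subsequence $r_i \to \infty$ for which $\omega_{r_i}(\cdot)$ converges smoothly, on the stated region, to a K\"ahler-Ricci flow limit $\omega_\infty(\cdot)$. As in the proof of Proposition \ref{3.15}, one must verify smoothness up to $t = 0$ using the time-zero derivative bounds just obtained. Covering $D$ by finitely many charts $G_{\overline{x}_k}(\Delta^{n-1})$ and gluing the local limits, c.f. \cite[Proof of Theorem 7.1]{lz-duke}, produces a global K\"ahler-Ricci flow on $\C \times D$ with complete time slices (since $D$ is compact and $\C$ is complete) and bounded curvature on $[0,A]$.

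Finally I would identify the limit. At $t = 0$ it is the product $\frac12 \left( \frac{N+1}{N} \right)^2 \left( \sqrt{-1} du \wedge d\overline{u} + N \omega_D(0) \right)$ of flat $\C$ with $(D, \omega_D(0))$. The product flow $\frac12 \left( \frac{N+1}{N} \right)^2 \left( \sqrt{-1} du \wedge d\overline{u} + N \omega_D(t) \right)$, where $\omega_D(\cdot)$ is the Ricci flow on $D$, is itself a complete K\"ahler-Ricci flow with bounded curvature on $\C \times D$ and the same initial condition, since the $\C$ factor is flat and stays flat while a product of Ricci flows is a Ricci flow. By the uniqueness of complete bounded-curvature Ricci flow solutions \cite{chen-zhu}, $\omega_\infty(\cdot)$ equals this product flow on $[0,A]$; this is precisely where the divisor flow $\omega_D(t)$ enters, the time rescaling by $r^{\frac{2}{N}}$ being what makes it visible. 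Since every subsequential limit is this same product flow, a contradiction argument as at the end of the proof of Proposition \ref{3.15} shows that the full family $\omega_r(\cdot)$ converges to it as $r \to \infty$, giving (\ref{5.10}).
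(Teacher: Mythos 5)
Your proposal follows the same architecture as the paper's proof: use the Ricci bound together with the scaling match $d_0(x,x_0)^{-2/(N+1)} \sim r^{-2/N}$ at the points $F_{\overline{x}}(e^{-r^2/2},w)$ to get a uniform biLipschitz comparison on the parabolic region, convert this to curvature bounds via Proposition \ref{A.1} and to derivative bounds via Shi and \cite[Appendix D]{Kleiner-Lott} (this is exactly Proposition \ref{A.38}), extract subsequential limits by Hamilton compactness, globalize over the compact divisor, identify the limit by uniqueness, and finish with the contradiction argument. The one genuine divergence is the globalization and uniqueness step. The paper takes the limit in the sense of Ricci flows on \'etale groupoids, with limit groupoid $(\R \rtimes \R) \times \R \times D$ and unit space $\C \times D$, precisely to account for the collapsing circle fibers (the deck transformations $u \mapsto u + 2\pi/r$ become dense translations), and then invokes Hilaire's uniqueness theorem for bounded-curvature Ricci flows on \'etale groupoids. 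You instead glue the local-cover limits directly into a flow on the manifold $\C \times D$ and apply Chen--Zhu. Since the proposition is stated entirely in terms of the local covers $\widetilde{F}_{\overline{x}}$, and since the transition maps between rescaled covers converge to the identity in the $u$-direction, your glued object is indeed a complete bounded-curvature flow on $\C \times D$ with the product initial condition, so Chen--Zhu does apply; your route is more elementary and avoids the groupoid machinery, at the cost of leaving the compatibility of the local-cover gluings (which is where the collapsing is hiding) somewhat implicit, whereas the groupoid formulation packages that bookkeeping intrinsically and also describes the limit of the actual collapsing manifolds $(X,\omega_{r_i}(\cdot))$, not just of their local covers.
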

\begin{proof}
Let $x_0$ be the basepoint in $X$. As before, the
Ricci flow equation and (\ref{5.6}) imply that
at any $x \in X$, the metrics $\omega_X(t)$ and $\omega_X(0)$ are
$e^{\const \: t \: d_0(x,x_0)^{- \frac{2}{N+1}}}$-biLipschitz.
Using the bulging standard
spatial asymptotics of $\omega_X(0)$ and Proposition \ref{A.1} of
the appendix (with the parameter $r$ of the proposition equal to
$d_0(x,x_0)^{\frac{1}{N+1}}$),
the biLipschitz bounds imply a uniform curvature
bound
\begin{equation}
|\Rm(x,t)| = O \left( d_0(x,x_0)^{- \frac{2}{N+1}} \right)
\end{equation}
on $\left\{ (x,t) \: : \: t \: \le \: A \: 
d_0(x,x_0)^{\frac{2}{n+1}} \right\}$.
Similarly, Proposition \ref{A.38} gives 
\begin{equation}
|\nabla^k \Rm|(x,t) = O \left( d_0(x,x_0)^{- \frac{2(k+1)}{N+1}} \right).
\end{equation}

Let $\overline{x} \in D$ and $F_{\overline{x}}$ be as in the
statement of the proposition. 
The curvature bounds imply that if $r_i \rightarrow \infty$ then we can take a
subsequence of $\{\omega_{r_i}(\cdot) \}_{i=1}^\infty$
that converges smoothly to a Ricci flow solution defined on
$\C \times \Delta^{n-1} \times [0,A]$.

This process can be globalized with respect to the divisor $D$; 
c.f. \cite[Proof of Theorem 7.1]{lz-duke}.
The result is that for an arbitrary $\overline{x} \in D$, there is
a subsequence of the pointed Ricci flows
$\left\{ \left( r_i^{- \frac{2}{N}} \omega_X( r_i^{\frac{2}{N}} \cdot), 
{F}_{\overline{x}} ( e^{-\frac{r_i^2}{2}}, 0 ) \right)
\right\}_{i=1}^\infty$
that converges, in
the sense of Ricci flows on \'etale groupoids, to a limiting Ricci flow
$\omega_\infty(\cdot)$
on the \'etale groupoid $(\R \rtimes \R) \times \R \times D$,
defined for $t \in [0, A]$.
Here the $(\R \rtimes \R)$-factor comes from the real factor in $\C$ and
represents the fact that the rescaled circle factor in $X$
collapses as $R \rightarrow \infty$.
The $\R$-factor is the imaginary factor in $\C$ and
represents the radial direction on $X$. 
The $D$-factor is the divisor.
(Because $D$ is compact, the
choice of basepoint $\overline{x} \in D$ is irrelevant.)
This limiting Ricci flow will have bounded curvature.

From the bulging standard spatial asymptotics of $\omega_X(0)$,
on the unit space $\C \times D$ of the groupoid we have
\begin{equation} \label{5.11}
\omega_\infty(0) = 
\frac12 \left( \frac{N+1}{N} \right)^2
\left(
\sqrt{-1} du \wedge d\overline{u} +
N \omega_D(0)
\right).
\end{equation}
From the uniqueness of Ricci flow solutions on \'etale groupoids
with bounded curvature on compact time intervals \cite{Hilaire},
we conclude that
\begin{equation} \label{5.12}
\omega_\infty(t) = 
\frac12 \left( \frac{N+1}{N} \right)^2
\left(
\sqrt{-1} du \wedge d\overline{u} +
N \omega_D(t)
\right).
\end{equation}

To return to the proof of the proposition, if the conclusion of the
proposition is not true then there are some $\overline{x} \in D$,
$F_{\overline{x}} : \Delta^* \times \Delta^{n-1} \rightarrow 
\overline{X}$, $S < \infty$ and a sequence
$r_i \rightarrow 0$ with the property that even
after passing to any subsequence of the $r_i$'s, the 
Ricci flows
$r_i^{- \frac{2}{N}}
\alpha_{r_i}^* 
\widetilde{F}_{\overline{x}}^* \omega_X(r_i^{\frac{2}{N}} \cdot)$
do not converge smoothly to 
$\frac12 \left( \frac{N+1}{N} \right)^2
\left(
\sqrt{-1} du \wedge d\overline{u} +
N G_{\overline{x}}^* \omega_D(\cdot)
\right)$ on the spacetime region
$\{z \in \C : |z| < S\} 
\times \Delta^{n-1} \times [0,A]$.
However, this contradicts the fact there is a subsequence of the
$r_i$'s so that
the pointed Ricci flows
$\left\{ \left( r_i^{- \frac{2}{N}} \omega_X( r_i^{\frac{2}{N}} \cdot), 
{F}_{\overline{x}} ( e^{-\frac{r_i^2}{2}}, 0 ) \right)
\right\}_{i=1}^\infty$ smoothly converge in the pointed sense to the Ricci flow
$\frac12 \left( \frac{N+1}{N} \right)^2
\left(
\sqrt{-1} du \wedge d\overline{u} +
N \omega_D(\cdot)
\right)$ on the unit space $\C \times D$ of the \'etale groupoid
$(\R \rtimes \R) \times \R \times D$, for the time interval $[0, A]$.
\end{proof} 
{\em Proof of Theorem \ref{intro3} :} If the theorem is not true
then after replacing $\{x_i\}_{i=1}^\infty$ by a subsequence, we can
assume that no subsequence $\{x_{i_j}\}_{j=1}^\infty$ is such that
$\left\{ \left( X, (x_{i_j}, 0), \omega_{i_j}(\cdot) \right) 
\right\}_{j=1}^\infty$ has a limit given by (\ref{intro1.6}).

Thinking of $x_i$ as an element of $\overline{X}$, 
after passing to a subsequence, we can assume that
$\lim_{i \rightarrow \infty} \overline{x}_i = \overline{x}$ for some
$\overline{x} \in D$. Then for large $i$, we can find $(z_i, w_i) \in 
\Delta^* \times \Delta^{n-1}$ so that $x_i = F_{\overline{x}}(z_i, w_i)$,
with $\lim_{i \rightarrow \infty} |z_i| = 0$. 

Define $r_i \in \R^+$ by $e^{\: - \: \frac{r_i^2}{2}} = |z_i|$.
Then $\lim_{i \rightarrow \infty} r_i = \infty$ and
$\widetilde{F}_{\overline{x}}(\alpha_{r_i}(0), w_i) =
F_{\overline{x}}(|z_i|, w_i)$. Applying Proposition \ref{5.8}
with $N=n$ gives,
\begin{equation} 
\lim_{i \rightarrow \infty} \omega_{r_i}(\cdot) =  
\frac12 \left( \frac{n+1}{n} \right)^2
\left(
\sqrt{-1} du \wedge d\overline{u} +
n G_{\overline{x}}^* \omega_D(\cdot)
\right).
\end{equation}
This is a contradiction. \qed

\begin{remark} \label{5.13}
The only role of the Ricci curvature bound (\ref{5.9}) is to ensure
an appropriate 
biLipschitz condition between $\omega_X(x,t)$ and $\omega_X(x,0)$,
in order to apply Proposition \ref{A.1} of the appendix.  Other
curvature conditions imply this.  For example, given any continuous function
$f : [0, \infty) \rightarrow  [0, \infty)$, it would be enough to
assume that 
\begin{equation} \label{5.14}
|\Ric(x,t)| \le d_0(x_0, x)^{- \frac{2}{N+1}} \: 
f(d_0(x_0, x)^{- \frac{2}{N+1}} t).
\end{equation}
Equation (\ref{5.9}) is the special case when $f$ is a constant function.
\end{remark}

\begin{remark}
One can ask how generally the Ricci curvature assumption in
Proposition \ref{5.8} holds. It obviously holds if the initial
metric is Ricci-flat, as in the work of Tian and Yau \cite{tian-yau2}.
We expect that it also holds, at least, if the initial metric is a
perturbation of the Ricci-flat metric.
\end{remark}

\subsection{Bulging superstandard spatial asymptotics} \label{subsection4.3}

We now specialize to the case when $D$ is ample.
Let $h$ be a Hermitian metric on $L_D$ with positive curvature form.
Let $\omega_D$ be the restriction, to $D$, of the curvature form
associated to $h$.
As before, $\sigma$ is a holomorphic section of $L_D$ with zero-set $D$.
Let $L_D^1$ denote the unit circle bundle
of $L_D$.

\begin{defin} \label{5.15}
A K\"ahler metric $\omega_X$ on $X$ has {\em bulging superstandard spatial
asymptotics} associated to $h$
if it has  bulging
standard spatial asymptotics, associated to $(\omega_D, N)$, and
\begin{equation} \label{5.16}
\omega_X = 
\eta_{\overline{X}} +
 {\sqrt{-1}} \partial \overline{\partial}  \left( \frac{N+1}{2}
\left( \log |\sigma|_h^{-2} \right)^{\frac{N+1}{N}}
+ H \right), 
\end{equation}
where 
\begin{enumerate} [\upshape (i)]
\item $\eta_{\overline{X}}$ is a smooth closed $(1,1)$-form on
$\overline{X}$, and
\item $H \in C^\infty(X) \cap L^\infty(X)$. 
\end{enumerate}
\end{defin}

Note that in Definition \ref{5.15}, the choice of $h$ does matter.

\begin{proposition} \label{5.17}
If $\overline{X}$ admits a K\"ahler metric then $X$ admits a
complete K\"ahler metric with bulging superstandard spatial asymptotics.
\end{proposition}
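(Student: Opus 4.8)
The plan is to imitate the construction of Proposition \ref{3.21}, using the Tian--Yau potential adapted to the bulging exponent. Set $t = \log |\sigma|_h^{-2}$ and $\phi = \frac{N+1}{2} \, t^{\frac{N+1}{N}}$, and take $\omega_X = \sqrt{-1} \partial \overline{\partial} \phi + K \omega_{\overline{X}}$ for a K\"ahler metric $\omega_{\overline{X}}$ on $\overline{X}$ and a parameter $K < \infty$. The first point to settle is that $\phi$ is a well-defined smooth function on $X$: since $\frac{N+1}{N}$ is not an integer, the function $t^{\frac{N+1}{N}}$ fails to be smooth where $t$ approaches $0$, so I would first rescale $h$ by a small positive constant. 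This leaves the curvature form, and hence $\omega_D$, unchanged, but arranges that $\sup_{\overline{X}} |\sigma|_h < 1$; then $t \ge c_0 > 0$ on $X$ and $t \to + \infty$ along $D$, so $\phi \in C^\infty(X)$.

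Next I would compute the complex Hessian. Writing $\omega_0 = \sqrt{-1} \partial \overline{\partial} t$, which is the positive curvature form of $h$, smooth on all of $\overline{X}$ and restricting to $\omega_D$ on $D$, one obtains
\[ \sqrt{-1} \partial \overline{\partial} \phi = \frac{(N+1)^2}{2N} \left( \frac{1}{N} \, t^{\frac{1-N}{N}} \sqrt{-1} \, \partial t \wedge \overline{\partial} t + t^{\frac{1}{N}} \, \omega_0 \right). \]
The first term is a rank-one positive semidefinite form and the second is positive definite (because $\omega_0 > 0$ and $t > 0$), so $\sqrt{-1} \partial \overline{\partial} \phi$ is already a K\"ahler metric on $X$; the term $K \omega_{\overline{X}}$ is harmless and may be dropped or kept (taking $K$ large is a convenient safety margin, exactly as in Proposition \ref{3.21}). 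For completeness I would invoke (\ref{5.4}): the displayed computation shows that along a radial ray the metric agrees to leading order with (\ref{5.3}), so the distance to $D$ grows like $\left( \log |z|^{-2} \right)^{\frac{N+1}{2N}} \to \infty$; since $\overline{X}$ with a neighborhood of $D$ removed is compact, $\omega_X$ is complete.

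The main work is to verify that $\omega_X$ has bulging standard spatial asymptotics in the sense of Definition \ref{5.1}, i.e. the rescaled limit (\ref{5.2}). Here I would pull back by $\widetilde{F}_{\overline{x}}$ and $\alpha_r$ and use the two relations recorded in the proof of the preceding proposition, namely $\frac{dz}{z} = \sqrt{-1} \, r \, du$ and $\log |z|^{-2} = r^2 - \sqrt{-1} r (u - \overline{u})$, so that $t \sim r^2$ on the relevant balls. Applying $r^{- \frac{2}{N}} \alpha_r^* \widetilde{F}_{\overline{x}}^*$ to the displayed Hessian, the exponents conspire so that the first term tends to $\frac12 \left( \frac{N+1}{N} \right)^2 \sqrt{-1} \, du \wedge d\overline{u}$ and the second tends to $\frac12 \left( \frac{N+1}{N} \right)^2 N \, G_{\overline{x}}^* \omega_D$, while the lower-order pieces --- the contribution of $\partial \psi$, where $|\sigma|_h^2 = |z|^2 e^{-\psi}$ locally, and the term $r^{- \frac{2}{N}} \alpha_r^* \widetilde{F}_{\overline{x}}^* \left( K \omega_{\overline{X}} \right)$ --- vanish as $r \to \infty$. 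This reproduces the right-hand side of (\ref{5.2}). Finally, the metric is manifestly of the form required by Definition \ref{5.15}, with $\eta_{\overline{X}} = K \omega_{\overline{X}}$ and $H = 0$, so it has bulging superstandard spatial asymptotics.

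The step I expect to be the main obstacle is the asymptotic verification of (\ref{5.2}): one must track the competing powers of $r$ coming from $t^{\frac{1-N}{N}}$, $t^{\frac{1}{N}}$ and the factor $r^2$ produced by $\partial t \wedge \overline{\partial} t$, and confirm that the error terms (the derivatives of the local weight $\psi$, and the background form $K \omega_{\overline{X}}$) are genuinely of lower order after the parabolic rescaling. The only conceptual wrinkle compared with the cylindrical case of Proposition \ref{3.21} is the non-integer exponent $\frac{N+1}{N}$, which forces the preliminary normalization $\sup_{\overline{X}} |\sigma|_h < 1$ so that $\phi$ is smooth across all of $X$.
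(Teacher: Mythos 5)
Your proposal is correct and follows essentially the same route as the paper: the paper takes $\omega_X = \frac{N+1}{2}\sqrt{-1}\partial\overline{\partial}\left(\log|\sigma|_h^{-2}\right)^{\frac{N+1}{N}}$ (with no background term $K\omega_{\overline{X}}$, since as you note the two terms of the complex Hessian are already positive), computes the same Hessian identity, and verifies (\ref{5.2}) via the same substitutions $\frac{dz}{z}=\sqrt{-1}\,r\,du$ and $\log|z|^{-2}=r^2-\sqrt{-1}r(u-\overline{u})$. Your preliminary normalization $\sup_{\overline{X}}|\sigma|_h<1$, needed so that the non-integer power of $\log|\sigma|_h^{-2}$ is smooth and the coefficients $t^{\frac{1}{N}}$, $t^{\frac{1-N}{N}}$ are positive on all of $X$, is a legitimate point that the paper leaves implicit.
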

\begin{proof}
Let $\omega_{\overline{X}}$ be a K\"ahler metric on $\overline{X}$. 
By assumption,
$-\sqrt{-1}\partial\bar{\partial} \left( \log  |\sigma|_h^2 \right)$
is a positive $(1,1)$-form on $\overline X$.
Put
\begin{equation} \label{5.18}
\omega_X =   \frac{N+1}{2} {\sqrt{-1}} \partial \overline{\partial} 
\left( \log |\sigma|_h^{-2} \right)^{\frac{N+1}{N}}.
\end{equation}

We first claim that $\omega_X$ is a
complete K\"ahler metric on $X$ with bulging standard spatial asymptotics.
To see this, we have
\begin{align} \label{5.19}
\omega_X 
 = &  \frac{(N+1)^2}{2N} 
\sqrt{-1} \left( \log |\sigma|_h^{-2} \right)^{\frac{1}{N}} 
\partial\overline{\partial} \left( \log |\sigma|_h^{-2} \right) + \\
&   \frac12 \left( \frac{N+1}{N} \right)^2
\left( \log |\sigma|_h^{-2} \right)^{\frac{1-N}{N}} 
\sqrt{-1}\partial \left( \log |\sigma|_h^{-2}\right) \wedge\overline{\partial} 
\left( \log |\sigma|_h^{-2} \right). \notag
\end{align}
Since 
$-\sqrt{-1}\partial\bar{\partial} \left( \log  |\sigma|_h^2 \right)$ 
is positive,
it follows that $\omega_X$ is a K\"ahler form. One can check 
that it is complete.

To see that $\omega_X$ has bulging standard spatial asymptotics,
given $\overline{x}\in D$, 
let $(z, w^1, \ldots, w^{n-1})$ be the local holomorphic coordinates for a neighborhood of $\overline{x}$ in $\overline{X}$ coming from $F_{\overline{x}}$.
In this coordinate system, $|\sigma|_h^2 = az\overline{z}$ for some smooth 
positive function $a(z, w^1, \ldots, w^{n-1})$. 
On $H \times \Delta^{n-1}$, we have
\begin{equation} \label{5.20}
\log |\sigma|_h^{-2} = \log \frac{1}{az\overline{z}} =
\log \left( \frac{1}{a e^{\sqrt{-1} u} e^{-\sqrt{-1} \overline{u}}} \right) =
\log a^{-1} - \sqrt{-1} u + \sqrt{-1} \overline{u}.
\end{equation}
Then
\begin{equation} \label{5.21}
\alpha_r^* \log |\sigma|_h^{-2} =
\alpha_r^*
\log a^{-1} - \sqrt{-1} ru + \sqrt{-1} r \overline{u} + r^2,
\end{equation}
\begin{equation} \label{5.22}
\alpha_r^* \partial \left( \log |\sigma|_h^{-2} \right) =
\partial \alpha_r^*
\log a^{-1} - \sqrt{-1} r du
\end{equation}
and
\begin{equation} \label{5.23}
\alpha_r^* \overline{\partial} \left( \log |\sigma|_h^{-2} \right) =
\overline{\partial} \alpha_r^*
\log a^{-1} + \sqrt{-1} r d\overline{u}.
\end{equation}
It follows that
\begin{equation} \label{5.24}
\lim_{r \rightarrow \infty} r^{- \frac{2}{N}} \alpha_r^* 
\widetilde{F}_{\overline{x}}^* \omega_X =
\frac12 \left( \frac{N+1}{N} \right)^2 \left( 
\sqrt{-1} du \wedge d\overline{u} + N
G_{\overline{x}}^* \omega_D \right).  
\end{equation}
Hence $\omega_X$ has bulging standard spatial asymptotics.

From (\ref{5.18}), 
it is now clear that $\omega_X$ also has bulging superstandard 
spatial asymptotics.
This proves the proposition.
\end{proof}

In the rest of this section, we
assume that $N = n$, the complex dimension of $X$,
The next proposition shows that the property of having bulging superstandard
spatial asymptotics is preserved under the K\"ahler-Ricci flow.

\begin{proposition} \label{5.25}
Suppose that $\omega_X(0)$ has 
bulging superstandard spatial asymptotics associated
to $h$.
Suppose that the K\"ahler-Ricci flow $\omega_X(t)$,
with initial K\"ahler metric $\omega_X(0)$, exists on a maximal time
interval $[0,T)$ in the sense of Theorem \ref{2.5}. Then for all
$t \in [0,T)$, $\omega_X(t)$ has bulging
superstandard spatial asymptotics,
associated to $h$.
\end{proposition}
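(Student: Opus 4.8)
The plan is to follow the structure of the proof of Proposition \ref{3.23} in the cylindrical case, the new ingredient being a volume-form computation tailored to the bulging potential which uses the standing assumption $N = n$ in an essential way. The first and decisive step is to compute $\Ric(\omega_X(0))$ and show that, on $X$, it decomposes as
\[
\Ric(\omega_X(0)) = \eta^\prime_{\overline{X}} + \sqrt{-1} \partial \overline{\partial} H^\prime,
\]
where $\eta^\prime_{\overline{X}}$ is the restriction to $X$ of a smooth closed $(1,1)$-form on $\overline{X}$, lying in the class $- 2\pi [K_{\overline{X}} + D]$ as in Proposition \ref{3.28}, and $H^\prime \in C^\infty(X) \cap L^\infty(X)$. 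To do this I fix a smooth volume form $dV_{\overline{X}}$ on $\overline{X}$, so that $- \sqrt{-1} \partial \overline{\partial} \log dV_{\overline{X}}$ is a smooth closed $(1,1)$-form on $\overline{X}$, and write $\Ric(\omega_X(0)) = - \sqrt{-1} \partial \overline{\partial} \log \frac{\omega_X(0)^n}{dV_{\overline{X}}} - \sqrt{-1} \partial \overline{\partial} \log dV_{\overline{X}}$. The content is then to establish the identity $\log \frac{\omega_X(0)^n}{dV_{\overline{X}}} = \log |\sigma|_h^{-2} + (\text{element of } C^\infty(X) \cap L^\infty(X))$. Since $\sqrt{-1} \partial \overline{\partial} \log |\sigma|_h^{-2}$ extends over $D$ to (minus) the smooth curvature form of $h$, this identity yields the asserted decomposition, with the $\log |\sigma|_h^{-2}$ term contributing exactly the class $-2\pi[D]$.

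The heart of the argument is this volume-form identity. Using the superstandard form from Definition \ref{5.15}, the metric is $\sqrt{-1} \partial \overline{\partial} \left( \frac{N+1}{2} (\log |\sigma|_h^{-2})^{\frac{N+1}{N}} \right)$ plus the lower-order contributions $\eta_{\overline{X}}$ and $\sqrt{-1} \partial \overline{\partial} H$, and the leading model is precisely the metric computed in (\ref{5.19}). Computing its $n$-th power shows $\omega_X(0)^n \sim \const \cdot (\log |\sigma|_h^{-2})^{\frac{n}{N} - 1} \cdot |\sigma|_h^{-2} \cdot dV_{\overline{X}}$ near $D$, whence $\log \frac{\omega_X(0)^n}{dV_{\overline{X}}} = \left( \frac{n}{N} - 1 \right) \log \log |\sigma|_h^{-2} + \log |\sigma|_h^{-2} + O(1)$. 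Here the assumption $N = n$ is decisive: it annihilates the $\log \log |\sigma|_h^{-2}$ term, which is unbounded and would not fit the superstandard structure, leaving $\log |\sigma|_h^{-2} + O(1)$. Establishing the boundedness of the remainder is where I expect the only genuine work to lie; I would make it rigorous by invoking the biLipschitz comparability of $\omega_X(0)$ with the explicit model metric of Proposition \ref{5.17} (both have the same bulging standard asymptotics), so that the ratio of their volume forms is pinched between positive constants and the logarithm of the ratio is bounded, while for the model itself the identity is a direct calculation.

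With the decomposition of $\Ric(\omega_X(0))$ in hand, the proof concludes exactly as in Proposition \ref{3.23}. Recalling $\omega_t = \omega_0 - t \Ric(\omega_0)$ from (\ref{2.2}) and substituting the decomposition, I obtain
\[
\omega_X(t) = \left( \eta_{\overline{X}} - t \eta^\prime_{\overline{X}} \right) + \sqrt{-1} \partial \overline{\partial} \left( \frac{N+1}{2} \left( \log |\sigma|_h^{-2} \right)^{\frac{N+1}{N}} + H - t H^\prime + u(t) \right),
\]
where $\omega_X(t) = \omega_t + \sqrt{-1} \partial \overline{\partial} u(t)$ as in Proposition \ref{2.4}. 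That proposition guarantees $u(t) \in C^\infty(X) \cap L^\infty(X)$, so $H - t H^\prime + u(t) \in C^\infty(X) \cap L^\infty(X)$, while $\eta_{\overline{X}} - t \eta^\prime_{\overline{X}}$ is a smooth closed $(1,1)$-form on $\overline{X}$. Combined with Proposition \ref{5.7}, which already shows that $\omega_X(t)$ has bulging standard spatial asymptotics associated to $(\omega_D, N)$, this is exactly the statement that $\omega_X(t)$ has bulging superstandard spatial asymptotics associated to $h$.
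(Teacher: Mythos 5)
Your proposal is correct and follows essentially the same route as the paper: the paper also reduces everything to writing $\Ric(\omega_X(0))$ as a smooth closed $(1,1)$-form on $\overline{X}$ in the class $-2\pi[K_{\overline{X}}+D]$ plus $\sqrt{-1}\partial\overline{\partial}$ of a function in $C^\infty(X)\cap L^\infty(X)$, with the boundedness of that function being exactly where $N=n$ is used, and then substitutes into $\omega_X(t)=\omega_t+\sqrt{-1}\partial\overline{\partial}u(t)$. The only difference is notational --- the paper packages your volume-form ratio as $H^\prime=\log\frac{h_{K_{\overline{X}}}|\sigma|_h^2}{h_{K_X}}$ via Hermitian metrics on $K_{\overline{X}}\otimes L_D$, whereas you compare $\omega_X(0)^n$ to a reference volume form and make the cancellation of the $\log\log|\sigma|_h^{-2}$ term explicit; the underlying computation is identical.
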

\begin{proof}
Choose a Hermitian metric $h_{K_{\overline{X}} \otimes L_D}$
on $K_{\overline{X}} \otimes L_D$.
Along with $h$, we obtain a Hermitian metric 
$h_{K_{\overline{X}}}$ on $K_{\overline{X}}$.
Then
\begin{equation} \label{5.26}
\Ric (\omega_X(0))  =  - \sqrt{-1} F \left( 
h_{K_{\overline{X}} \otimes L_D} \right)
 - \sqrt{-1} \partial \overline{\partial} \left(
\log
\frac{
h_{K_{\overline{X}}} |\sigma|_{h}^2
}{
h_{K_X}
} \right)
\end{equation}
on $X$. 

Put $\eta^\prime_{\overline{X}} = - \sqrt{-1} F \left( 
h_{K_{\overline{X}} \otimes L_D} \right)$ and $H^\prime =
\log \frac{
h_{K_{\overline{X}}} |\sigma|_{h}^2
}{
h_{K_X}
}$. By equation (\ref{5.3}), the bulging
standard spatial asymptotics imply that $H^\prime \in C^\infty(X) \cap
L^\infty(X)$. This is the place where we use that $N=n$.

Recall the definition of $\omega_t$ from (\ref{2.2}). We can write
\begin{align} \label{5.27}
\omega_X(t) = & \omega_t + \sqrt{-1} \partial \overline{\partial} u(t) \\
= & \eta_{\overline{X}} - t \eta^\prime_{\overline{X}} + \sqrt{-1}
\partial \overline{\partial} \left( 
 \frac{n+1}{2}
\left( \log |\sigma|_h^{-2} \right)^{\frac{n+1}{n}}
+ H + t H^\prime + u(t) \right). \notag
\end{align}
The proposition follows.
\end{proof}

\begin{proposition} \label{5.28}
Suppose that $\omega_X(0)$ has bulging
superstandard spatial asymptotics
associated to $h$.
Let $\eta_{\overline{X}} \in \Omega^{(1,1)}(\overline{X})$ 
be a smooth representative of the cohomology class represented by the
closed current
\begin{equation} \label{5.29}
\omega_X(0) -
\sqrt{-1} \partial \overline{\partial} \left( \frac{n+1}{2}
\left( \log \left| \sigma \right|_{h}^{-2} \right)^{\frac{n+1}{n}} \right)
\end{equation}
on $\overline{X}$.
Let $\eta^\prime_{\overline{X}} \in \Omega^{(1,1)}(\overline{X})$ be
a smooth representative of 
$- 2 \pi [K_{\overline{X}} + D] \in \HH^{(1,1)}(\overline{X})$.
Let $T_3$ be the supremum (possibly infinite) of the numbers $T^\prime$
for which there is some
$f_{T^\prime} \in C^\infty(X) \cap L^\infty(X)$ with bounded
covariant derivatives (with respect to $\omega_X(0)$) so that
\begin{equation} \label{5.30}
\eta_{\overline{X}} - T^\prime \eta^\prime_{\overline{X}} + 
\sqrt{-1} \partial \overline{\partial} \left( 
\frac{n+1}{2}
\left( \log \left| \sigma \right|_{h}^{-2} \right)^{\frac{n+1}{n}}
 + f_{T^\prime} \right)
\end{equation}
is a K\"ahler form on $X$ which is biLipschitz to $\omega_X(0)$.
Then $T_3$ equals the numbers $T_1 = T_2$ of Theorem \ref{2.5}.
\end{proposition}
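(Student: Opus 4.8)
The plan is to prove $T_3 = T_1 = T_2$ by establishing the two inequalities $T_3 \le T_2$ and $T_1 \le T_3$ separately, following the template of Proposition \ref{3.28} and using Proposition \ref{5.25} to keep track of the relevant potentials. The organizing observation is that the present $\eta_{\overline{X}}$ differs from the $\eta_{\overline{X}}$ of Definition \ref{5.15} by $\sqrt{-1} \partial \overline{\partial}$ of a smooth function on the \emph{compact} manifold $\overline{X}$, and likewise the present $\eta^\prime_{\overline{X}}$ differs from $- \sqrt{-1} F(h_{K_{\overline{X}} \otimes L_D})$ (which appeared in (\ref{5.26})) by $\sqrt{-1} \partial \overline{\partial}$ of a smooth function on $\overline{X}$. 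Such corrections can be absorbed into bounded potentials without leaving the class $C^\infty(X) \cap L^\infty(X)$ and without disturbing boundedness of covariant derivatives, exactly as in the cylindrical argument.

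For $T_3 \le T_2$, I would fix an admissible $T^\prime$ for $T_3$ with witness $f_{T^\prime}$, and rewrite $\omega_X(0) = \eta_{\overline{X}} + \sqrt{-1} \partial \overline{\partial} \left( \frac{n+1}{2} \left( \log |\sigma|_h^{-2} \right)^{\frac{n+1}{n}} + H \right)$ and, as in the proof of Proposition \ref{5.25} (using $N=n$), $\omega_{T^\prime} = \eta_{\overline{X}} - T^\prime \eta^\prime_{\overline{X}} + \sqrt{-1} \partial \overline{\partial} \left( \frac{n+1}{2} \left( \log |\sigma|_h^{-2} \right)^{\frac{n+1}{n}} + H + T^\prime H^\prime \right)$, with $H, H^\prime \in C^\infty(X) \cap L^\infty(X)$. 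Subtracting, the form in (\ref{5.30}) equals $\omega_{T^\prime} + \sqrt{-1} \partial \overline{\partial} F_{T^\prime}$ with $F_{T^\prime} = f_{T^\prime} - H - T^\prime H^\prime$ bounded. Since this is assumed Kähler and biLipschitz to $\omega_X(0)$, I get $- \const \, \omega_X(0) \le \sqrt{-1} \partial \overline{\partial} F_{T^\prime} \le \const \, \omega_X(0)$, hence $|\triangle_{\omega_X(0)} F_{T^\prime}| \le \const$, and then I would upgrade this to uniformly bounded covariant derivatives of $F_{T^\prime}$, showing $T^\prime$ is admissible for $T_2$. For $T_1 \le T_3$, I would fix $T^\prime$ admissible for $T_1$, whose flow potential $u(T^\prime)$ has bounded covariant derivatives by Theorem \ref{2.5} and Proposition \ref{2.4}; then (\ref{5.27}) shows that with $f_{T^\prime} = H + T^\prime H^\prime + u(T^\prime)$ the form (\ref{5.30}) equals $\omega_X(T^\prime)$, which is Kähler and biLipschitz to $\omega_X(0)$. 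That $\omega_X(T^\prime)$ has bulging superstandard asymptotics (Proposition \ref{5.25}) is what I would then use, again via the regularity step, to conclude that $f_{T^\prime}$ lies in the required class, giving $T_1 \le T_3$.

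The hard part will be the regularity upgrade from a bound on $\sqrt{-1} \partial \overline{\partial} F_{T^\prime}$ (equivalently on $\triangle_{\omega_X(0)} F_{T^\prime}$) to uniformly bounded covariant derivatives of all orders. In the cylindrical case this was immediate from ``bounded geometry including a positive injectivity radius,'' but the bulging metric has \emph{no} uniform lower bound on injectivity radius: the circle fibers collapse as one goes out the end, which is precisely the phenomenon that already obstructed pseudolocality in Subsection \ref{subsection4.2}. I therefore expect this to be the genuine obstacle, and I would handle it by localizing the elliptic estimates. Since boundedness of covariant derivatives with respect to $\omega_X(0)$ is a pointwise, tensorial, and scale-covariant condition, I would run the estimates on the local covers $H \times \Delta^{n-1}$ where the circle factor is unwrapped, and combine them with the curvature decay (\ref{5.6}) together with the accompanying decay of derivatives of the curvature tensor, which is exactly what Proposition \ref{5.7} supplies through Appendices \ref{localapp} and \ref{powerapp}. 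One should also note that a bound on $\triangle_{\omega_X(0)} F_{T^\prime}$ alone is not enough to control all higher derivatives even in perfect bounded geometry; the argument must use the full complex-Hessian bound and the controlled asymptotic (super)standard structure of the metrics being compared, and it is in reconciling this with the collapsing fibers that the main care is required.
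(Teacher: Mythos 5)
Your proposal is correct and is essentially the paper's own (omitted) argument: the paper proves Proposition \ref{5.28} by saying ``the proof is similar to that of Proposition \ref{3.28},'' and your two inequalities $T_3 \le T_2$ and $T_1 \le T_3$, with the potentials $H$, $H^\prime$, $u(T^\prime)$ tracked exactly as in (\ref{5.27}), reproduce that template faithfully. Your additional point --- that the elliptic-regularity step in Proposition \ref{3.28} explicitly invokes a positive injectivity radius, which fails for bulging metrics because of the collapsing circle fibers, and that one should therefore run the local estimates on the unwrapped covers $H \times \Delta^{n-1}$ where the geometry is uniformly bounded --- is a genuine refinement of a point the paper leaves implicit, and is the right way to make the ``similar'' proof rigorous.
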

\begin{proof}
The proof is similar to that of Proposition \ref{3.28}.  We omit the details.
\end{proof}

\begin{corollary}
Suppose that $\omega_X(0)$ has bulging superstandard spatial
asymptotics.  If $[K_{\overline{X}} + D] \ge 0$ then the flow exists
for all positive time.
\end{corollary}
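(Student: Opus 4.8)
The plan is to follow the cylindrical Corollary \ref{3.38} closely, with Proposition \ref{5.28} playing the role that Proposition \ref{3.28} played there. By Proposition \ref{5.28}, the existence time of the flow is the quantity $T_3$, so it suffices to prove $T_3 = \infty$; that is, for every $T^\prime > 0$ I must exhibit a function $f_{T^\prime} \in C^\infty(X) \cap L^\infty(X)$, with bounded covariant derivatives relative to $\omega_X(0)$, making the form (\ref{5.30}) K\"ahler and biLipschitz to $\omega_X(0)$.

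First I would fix the representatives. The form $\eta^\prime_{\overline{X}}$ represents $- 2\pi [K_{\overline{X}} + D]$, and by hypothesis $[K_{\overline{X}} + D] \ge 0$, so I can choose $\eta^\prime_{\overline{X}}$ so that $- \eta^\prime_{\overline{X}}$ is a nonnegative closed $(1,1)$-form on $\overline{X}$. Since the K\"ahler-Ricci flow exists for a short time, the short-time existence statement of Section \ref{section2} together with Theorem \ref{2.5} and Proposition \ref{5.28} gives $T_3 > 0$, which furnishes a single function $f_0$ (with the required boundedness of its covariant derivatives) for which the $T^\prime = 0$ form
\[
\eta_{\overline{X}} + \sqrt{-1} \partial \overline{\partial} \left( \frac{n+1}{2} \left( \log |\sigma|_h^{-2} \right)^{\frac{n+1}{n}} + f_0 \right)
\]
is K\"ahler and biLipschitz to $\omega_X(0)$.

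The key step is to reuse this same $f_0$ for all positive $T^\prime$. For any $T^\prime > 0$, the form (\ref{5.30}) with $f_{T^\prime} = f_0$ is obtained from the $T^\prime = 0$ form above by adding $- T^\prime \eta^\prime_{\overline{X}} \ge 0$, so it stays positive and hence K\"ahler. The lower biLipschitz bound against $\omega_X(0)$ is then immediate. For the upper bound, I would check that the smooth form $- \eta^\prime_{\overline{X}}$ on $\overline{X}$ is dominated by $\omega_X(0)$: using the model expansion (\ref{5.3}) (or (\ref{5.5})), every component of $- \eta^\prime_{\overline{X}}$ is bounded by a constant times the corresponding component of $\omega_X(0)$ as $z \to 0$, including the collapsing circle direction, where the angular component of the smooth form decays like $|z|^2$ while that of the metric decays only like a power of $\log |z|^{-2}$. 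Thus $- \eta^\prime_{\overline{X}} \le C \, \omega_X(0)$, and (\ref{5.30}) is biLipschitz to $\omega_X(0)$ for each fixed $T^\prime$. As $T^\prime$ is arbitrary, Proposition \ref{5.28} yields $T_3 = \infty$, hence $T_1 = T_2 = \infty$ and the flow exists for all positive time.

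The point requiring care, rather than a genuine obstacle, is the upper biLipschitz estimate just described: the shrinking circle fibers make $\omega_X(0)$ degenerate in the angular direction, so one must confirm that the smooth perturbation $- \eta^\prime_{\overline{X}}$ degenerates even faster there. All of the substantive analytic content has already been absorbed into Proposition \ref{5.28}, so once its characterization of the existence time is granted, the positivity hypothesis $[K_{\overline{X}} + D] \ge 0$ closes the argument exactly as in the cylindrical case.
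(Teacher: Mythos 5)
Your proposal is correct and follows essentially the same route as the paper: the paper's proof of this corollary simply says it is similar to Corollary \ref{3.38}, whose argument is exactly your scheme of choosing $-\eta^\prime_{\overline{X}} \ge 0$, taking the $f_0$ furnished by $T_3 > 0$, and reusing it for every $T^\prime$. Your explicit check that $-\eta^\prime_{\overline{X}} \le C\,\omega_X(0)$ despite the collapsing circle direction is a detail the paper leaves implicit, and it is the right point to verify in the bulging setting.
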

\begin{proof}
The proof is similar to that of Corollary \ref{3.38}.  We omit the details.
\end{proof}

\begin{remark}
In \cite[Section 4]{tian-yau2}, Tian and Yau showed that if $K_{\overline{X}} 
+ D = 0$ then there is a complete Ricci-flat K\"ahler metric on $X$.
\end{remark}

\section{Conical K\"ahler-Ricci flows} \label{section5}

This section deals with conical spatial asymptotics. 
We begin by defining asymptotically conical Riemannian metrics.
In
Subsection \ref{subsection5.1} we show that this property
is preserved under Ricci flow, with no change in the asymptotic cone.
In Subsection \ref{subsection5.2} we use pseudolocality to 
show that if the initial metric of an immortal solution
is asymptotically conical then
a parabolic blowdown limit always exists, defined on the complement
of the vertex in a cone. 

Passing to the K\"ahler case, in Subsection \ref{subsection5.3}
we define conical standard spatial asymptotics and 
show that this property is preserved under the
K\"ahler-Ricci flow.
Starting with Subsection \ref{subsection5.4}, we assume that $D$ is ample.
We introduce conical superstandard
spatial asymptotics. We show that if $\overline{X}$ admits a
K\"ahler metric then $X$ admits a metric with
conical superstandard spatial asymptotics.
We prove that having conical superstandard spatial
asymptotics is preserved under the K\"ahler-Ricci flow. 
We characterize the first singularity time, if there is one.

In Subsection \ref{subsection5.5} we consider an ample line
bundle $E$ over a compact complex manifold $D$. We show that
there is a unique formal asymptotic expansion for an
expanding K\"ahler soliton on the complement of the zero-section
of $E$, whose associated vector
field generates rescaling of the line bundle.
The soliton turns out to be
a gradient soliton. Given an asymptotic expansion for
a K\"ahler-Ricci flow on the complement of the zero-section, 
we show that its blowdown
limit is the expanding soliton. We apply this to the case
of conical asymptotics.

\subsection{Asymptotically conical metrics} \label{subsection5.1}

Let $(Y, g_Y)$ be a compact Riemannian manifold.
Let $CY$ denote the cone over $Y$, i.e.
$CY = ((0, \infty) \times Y) \cup \{\star\}$, with the metric
$g_{CY} = dR^2 + R^2 g_Y$ on $CY - \star$.

Let $X$ be a smooth manifold with a basepoint $x_0$.
Let $g_X$ be a complete Riemannian metric on $X$.
Given $0 < R_1 < R_2 < \infty$, put $A(R_1, R_2) =
\overline{B(x_0, R_2)} - B(x_0, R_1)$.
 
\begin{defin} \label{6.1}
We say that $(X, x_0, g_X)$
is {\em asymptotically conical}, with asymptotic cone $(CY, g_{CY})$, 
if there is a pointed Gromov-Hausdorff
limit 
$\lim_{\lambda \rightarrow \infty} \left( \frac{1}{\lambda} X, x_0, g \right)
= \left( CY, \star, g_{CY} \right)$ so that
for $0 < R_1 < R_2 < \infty$, there is a smooth limit
\begin{equation} \label{6.2}
\lim_{\lambda \rightarrow \infty}
\frac{1}{\lambda} A(\lambda R_1, \lambda R_2) = 
\overline{B(\star, R_2)} - B(\star, R_1).
\end{equation}
\end{defin}

In Definition \ref{6.1}, 
it is implicit that the same maps are used to define the
Gromov-Hausdorff limit and the smooth limits.
From the definition, $(X, g_X)$ automatically has quadratic
curvature decay. 

\begin{remark} \label{6.3}
If we instead assumed that $(X, g_X)$ has quadratic
curvature decay, $Y$ is a compact metric space whose Hausdorff dimension is
$\dim(X) - 1$,
and that there is a pointed Gromov-Hausdorff
limit 
$\lim_{\lambda \rightarrow \infty} \left( \frac{1}{\lambda} X, x_0, g \right)
= \left( CY, \star, g_{CY} \right)$, then we would conclude that
$Y$ is a smooth manifold with a $C^{1,\alpha}$-regular Riemannian metric $g_Y$.
For simplicity, in Definition \ref{6.1} 
we will just assume that $g_Y$ is smooth.
\end{remark}

\begin{proposition} \label{6.4}
Let $g_X(\cdot)$ be a Ricci flow that exists for $t \in [0, T)$ and
whose initial condition
$g_X(0)$ is asymptotically conical with asymptotic cone $CY$.
Suppose that $g_X(\cdot)$ has
complete time slices and bounded curvature on compact time intervals.
Then for all $t \in [0, T)$, the time slice $(X, g(t))$ 
is asymptotically conical with asymptotic cone $CY$.
\end{proposition}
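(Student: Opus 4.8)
The plan is to follow the template of Proposition \ref{5.7}: first show that the relevant curvature decay is preserved along the flow, then use this to extract smooth limits of spatial rescalings of the time-$t$ slice, and finally identify each such limit with the corresponding time-zero limit (hence with the cone) by a biLipschitz comparison. The underlying principle is that for a fixed annulus in the cone the elapsed time $t$ is negligible compared with the curvature scale at spatial infinity, so the time-$t$ geometry far out is indistinguishable from the time-zero geometry; since the asymptotic cone is determined entirely by this large-scale geometry, it cannot change.

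I would begin by recording the consequences of the hypothesis at $t=0$. Asymptotic conicality with cone $(CY,g_{CY})$ forces quadratic curvature decay $|\Rm|(x,0) = O\!\left(d_0(x,x_0)^{-2}\right)$, and the smooth convergence on annuli in Definition \ref{6.1} upgrades this to $|\nabla^k \Rm|(x,0) = O\!\left(d_0(x,x_0)^{-(k+2)}\right)$ for every $k$. Moreover the time-zero distance function $d_0(\cdot,x_0)$ is, near infinity, modeled on the radial function $R$ on the cone, for which $\Hess(R) = R^{-1}(g_{CY} - dR^2) \ge 0$, so that $\Hess(R)/R \ge 0$. Thus $d_0$ furnishes a distance-like function $\phi$ with $\Hess(\phi)/\phi$ bounded below, which is precisely the end-structure hypothesis of Appendix \ref{powerapp}. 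Applying the general-Ricci-flow results of that appendix (the preservation of the $|\Rm|$-decay being of Dai-Ma type, with the derivative decay then supplied as in Proposition \ref{B.15}, or alternatively by feeding the time-zero derivative bounds into the local derivative estimate of \cite[Appendix D]{Kleiner-Lott}), I would conclude that the decay persists on all of $[0,t]$: $|\nabla^k \Rm|(x,s) = O\!\left(d_0(x,x_0)^{-(k+2)}\right)$ uniformly for $s \in [0,t]$.

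With these bounds in hand, I would fix the maps $\psi_\lambda$ realizing the time-zero asymptotic cone on annuli and show they also realize it at time $t$. On the annulus $A(\lambda R_1,\lambda R_2)$ the preserved bounds give $|\Ric| = O\!\left((\lambda R_1)^{-2}\right)$ uniformly on $[0,t]$; integrating the Ricci flow equation, $g_X(t)$ and $g_X(0)$ are $e^{\const \cdot t (\lambda R_1)^{-2}}$-biLipschitz there, a factor tending to $1$ as $\lambda \to \infty$. The preserved derivative bounds control the covariant derivatives of $g_X(t)-g_X(0)$ at the rate $O\!\left(d_0^{-(k+2)}\right)$. Pulling back by $\psi_\lambda$ and rescaling by $\lambda^{-2}$, the contribution of $g_X(t)-g_X(0)$ then tends to zero in every $C^k$-norm measured in $g_{CY}$ (the $(0,2)$-tensor norm is invariant under simultaneous rescaling of tensor and metric, and each covariant derivative produces a net factor $O(\lambda^{-2})$). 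Hence $\lambda^{-2}\psi_\lambda^* g_X(t)$ and $\lambda^{-2}\psi_\lambda^* g_X(0)$ share the same smooth limit $g_{CY}$ on each $\overline{B(\star,R_2)} - B(\star,R_1)$, which is the smooth part of Definition \ref{6.1} at time $t$. For the Gromov-Hausdorff part I would use the same maps $\psi_\lambda$: bounded curvature on $[0,t]$ makes $g_X(t)$ and $g_X(0)$ globally biLipschitz with a fixed constant, while the refined estimate above makes that constant tend to $1$ outside any fixed ball; a rescaled geodesic realizing a large distance in $\frac{1}{\lambda}g_X(t)$ spends a negligible fraction of its length in the fixed basepoint region, which shrinks to the vertex after rescaling, so the pointed distances converge and the pointed Gromov-Hausdorff limit is again $(CY,\star,g_{CY})$.

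The main obstacle is the curvature-decay preservation of the second paragraph. In the general, non-Kähler setting one cannot invoke the local curvature estimate of Appendix \ref{localapp}, so one must verify with some care that the hypothesis $\Hess(\phi)/\phi \ge -\const$ genuinely holds for an asymptotically conical end — including the transition region where $d_0$ is only approximately radial and must be replaced by a suitable smoothing — and that Appendix \ref{powerapp} then yields not merely the $|\Rm|$-decay but enough derivative decay to run the $C^\infty$ extraction. A secondary subtlety is the behavior near the basepoint, where the biLipschitz constant is not close to $1$; this is harmless because the rescaling collapses the fixed basepoint region to the single point $\star$, so its contribution to both the smooth limits on annuli and the Gromov-Hausdorff distances disappears in the limit.
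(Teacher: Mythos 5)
Your proposal is correct and takes essentially the same route as the paper: both arguments hinge on choosing a distance-like function $\phi$ whose Hessian is controlled by the conical asymptotics so that the decay-preservation results of Appendix \ref{powerapp} (Propositions \ref{4.5} and \ref{4.7}) apply, and then on the $e^{\const \, t \, d_0(x,x_0)^{-2}}$-biLipschitz comparison between $g_X(t)$ and $g_X(0)$ to identify both the Gromov--Hausdorff limit and the smooth annular limits at time $t$ with those of the initial metric. The only difference is presentational: you argue directly that the rescaled time-$t$ metrics converge to $g_{CY}$, whereas the paper frames the same ingredients as a compactness-plus-contradiction argument.
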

\begin{proof}
Suppose that the conclusion of the proposition is not true. Then for some
$t \in [0, T)$, there is a sequence $\lambda_i \rightarrow \infty$ 
with the property that even 
after passing to any subsequence of the
$\lambda_i$'s, either 
\begin{enumerate} [\upshape (i)]
\item 
The sequence $\left\{ \left(
\frac{1}{\lambda_i} X, x_0, g(t) \right) \right\}_{i=1}^\infty$
does not have a pointed Gromov-Hausdorff limit isometric to $CY$, or
\item
There are some $0 < R_1 < R_2 < \infty$ so that
the sequence
$\left\{ \left( \frac{1}{\lambda_i} A(\lambda_i R_1, \lambda_i R_2)
\right) \right\}_{i=1}^\infty$
does not have a smooth limit.
\end{enumerate}

Let $\phi$ be a slight smoothing of the function $1 + d_0(\cdot, x_0)$. 
Because of the conical asymptotics, we can assume that the Hessian of $\phi$ 
is bounded in norm. Then $\phi$ is a distance-like function that satisfies
(\ref{4.3.5}). From Proposition \ref{4.5},
on the time interval $[0,t]$, the curvature of $X$ decays uniformly
as $O \left( d_0(x,x_0)^{- 2} \right)$. It follows from the
Ricci flow equation that at a point $x \in X$,
the metrics $g_X(t)$ and $g_X(0)$ are
$e^{\const t  d_0(x,x_0)^{-2}}$-biLipschitz to each other. Hence
$\lim_{i \rightarrow \infty} \left(
\frac{1}{\lambda_i} X, x_0, g(t) \right) = CY$ in the pointed
Gromov-Hausdorff topology, so we can assume that
(ii) holds.

Since $g_X(0)$ is asymptotically conical, 
given $k \ge 0$, we have $|\nabla^k \Rm|(x,0) =
O \left( d_0(x,x_0)^{-2-k} \right)$. From Proposition \ref{4.7},
$|\nabla^k \Rm|(x,t) =
O \left( d_0(x,x_0)^{-2-k} \right)$.
This implies that
there are uniform bounds for
$|\nabla^k \Rm|_{\frac{1}{\lambda_i^2} g_X(t)}$ on
$A \left( \frac12 R_1, 2 R_2 \right)
 \subset \left( \frac{1}{\lambda_i} X, x_0 \right)$.
After passing to a subsequence and using the noncollapsing, 
we can assume that there is smooth convergence as
$i \rightarrow \infty$ of the metrics
$\frac{1}{\lambda_i^2} g_X(t)$ on
$A \left( R_1, R_2
\right) \subset \left( \frac{1}{\lambda_i} X, x_0 \right)$.
This is a contradiction.
\end{proof}

\subsection{Parabolic blowdowns of asymptotically conical
Ricci flows} \label{subsection5.2}

We now use pseudolocality to show that we have the curvature bounds needed to
take a blowdown limit of an immortal Ricci flow solution whose
initial metric is asymptotically conical.

\begin{proposition} \label{6.5}
Suppose that the Ricci flow $g_X(\cdot)$ of Proposition
\ref{6.4} is defined for all
$t \ge 0$.
Then there are $R < \infty$ and $\epsilon > 0$ so that 
$|\Rm(x,t)| \le (\epsilon d_0(x,x_0))^{-2}$ whenever
$d_0(x,x_0) \ge R$ and $t \le \epsilon d_0(x,x_0)^2$.
\end{proposition}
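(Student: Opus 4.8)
The plan is to apply Perelman's pseudolocality theorem, in the $t$-independent local form of Lu \cite{Lu}, at the scale of the distance to the basepoint. Fix a point $x$ with $D = d_0(x,x_0)$ large and set $r_0 = cD$ for a small constant $c$ to be chosen. Working at time zero on the ball $B_0(x, r_0)$, I would verify the two hypotheses needed to run pseudolocality: an initial curvature bound $|\Rm(\cdot,0)| \le r_0^{-2}$ on $B_0(x, r_0)$, and an almost-Euclidean isoperimetric inequality on $B_0(x, r_0)$ with defect below the dimensional constant $\delta$ supplied by the theorem. The curvature bound is immediate from the quadratic curvature decay of an asymptotically conical metric (Definition \ref{6.1}): on $B_0(x, cD)$ one has $d_0(\cdot, x_0) \ge (1-c)D$, so $|\Rm(\cdot,0)| = O(((1-c)D)^{-2})$, which is $\le (cD)^{-2}$ once $c$ is small. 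The same estimate handles every $x$ at distance $D$ simultaneously.

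The isoperimetric inequality is where the conical structure is essential, and I expect this to be the main obstacle. Since that inequality is scale-invariant, it suffices to check it on the rescaled ball $B(\hat x, c)$ in $\frac{1}{D}X$, where $\hat x$ lies at distance $1$ from $x_0$. By Definition \ref{6.1}, as $D \to \infty$ the annulus containing this ball converges smoothly to the corresponding annulus $A(1-c, 1+c)$ in the cone $CY$, a fixed smooth region of bounded geometry lying away from the vertex. A geodesic ball of small radius $c$ in such a region satisfies the almost-Euclidean isoperimetric inequality with a defect tending to $0$ as $c \to 0$; so I would first choose $c$ small enough that the cone ball has defect below $\delta/2$, and then choose $R$ large enough that, for $D \ge R$, the smooth convergence forces the defect of $B(\hat x, c)$, uniformly over all centers $\hat x$ at distance $1$, to fall below $\delta$. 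This is precisely the step that fails in the bulging case, where the collapsing circle fiber destroys the non-collapsing hypothesis; in the conical case the rescaled geometry is non-collapsed.

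With both hypotheses verified (and recalling that the flow is complete with bounded curvature on the compact time interval in question, so pseudolocality applies), one obtains the $t$-independent bound $|\Rm(x', t)| \le (\epsilon_0 r_0)^{-2}$ for all $x'$ with $d_t(x', x) < \epsilon_0 r_0$ and all $0 \le t \le (\epsilon_0 r_0)^2$, where $\epsilon_0 = \epsilon_0(n)$. Taking $x' = x$ and $r_0 = cD$ gives $|\Rm(x,t)| \le (\epsilon_0 c D)^{-2}$ for $0 \le t \le \epsilon_0^2 c^2 D^2$. Finally I would set $\epsilon = \epsilon_0^2 c^2$, shrinking $c$ so that $\epsilon_0 c \le 1$: then the admissible range $t \le \epsilon D^2$ sits inside $[0, (\epsilon_0 r_0)^2]$, while $(\epsilon_0 c D)^{-2} \le (\epsilon D)^{-2}$, yielding exactly $|\Rm(x,t)| \le (\epsilon d_0(x,x_0))^{-2}$ on $\{ d_0(x,x_0) \ge R, \, t \le \epsilon d_0(x,x_0)^2 \}$.

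The one subtlety worth flagging is that the estimate must be used in the form uniform down to $t = 0$, rather than Perelman's original $\alpha t^{-1} + (\epsilon_0 r_0)^{-2}$, whose $t^{-1}$ term would obstruct a clean $t$-independent conclusion near $t = 0$. The initial curvature bound $|\Rm(\cdot, 0)| \le r_0^{-2}$ is exactly the extra input that Lu's version uses to remove that singularity; verifying this full curvature bound — not merely Perelman's scalar lower bound — is what makes the conclusion come out in the asserted $t$-independent shape.
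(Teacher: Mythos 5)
Your proposal is correct and takes essentially the same route as the paper's proof: both verify the time-zero curvature bound $|\Rm| \le (\alpha d_0(x,x_0))^{-2}$ on a ball of radius comparable to $d_0(x,x_0)$ and then invoke the $t$-independent local pseudolocality of \cite{Lu} at that scale, finishing by redefining constants exactly as you do. Your explicit verification of the noncollapsing/isoperimetric hypothesis via the smooth convergence of rescaled annuli to the cone is a detail the paper leaves implicit, but it is the same argument.
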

\begin{proof}
From Definition \ref{6.1}, there are $C, R^\prime < \infty$ so that
$|\Rm(x,0)| \le C d_0(x,x_0)^{-2}$
whenever
$d_0(x,x_0) \ge R^\prime$. Hence we can find
$\alpha > 0$ so that whenever
$d_0(x,x_0) \ge 2 R^\prime$, we have
$|\Rm| \le (\alpha d_0(x,x_0))^{-2}$ on
$B_0(x, \alpha d_0(x,x_0))$. 
From \cite[Proposition 1]{Lu}, there is some
$\epsilon_0 > 0$ so that
$|\Rm(x,t)| \le (\epsilon_0 \alpha d_0(x,x_0))^{-2}$
whenever $d_0(x,x_0) \ge 2 R^\prime$ and
$t \le (\epsilon_0 \alpha d_0(x,x_0))^{2}$. After redefining
the constants, the proposition follows.
\end{proof}

\begin{proposition} \label{6.6}
There are $R < \infty$ and $\epsilon > 0$ such that
for each $k > 0$, there is some $C_k < \infty$ so that
$|\nabla^k \Rm| \le C_k d_0(x,x_0)^{-2-k}$
whenever $d_0(x,x_0) \ge R$ and $t \le \epsilon d_0(x,x_0)^2$.
\end{proposition}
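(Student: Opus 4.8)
The plan is to bootstrap the curvature bound of Proposition \ref{6.5} to the derivative bounds by a standard rescaling-plus-Shi argument. Fix a point $x$ with $d := d_0(x,x_0)$ large and a time $t$ with $t \le \epsilon' d^2$, where $\epsilon'$ is a fixed fraction of the $\epsilon$ of Proposition \ref{6.5} to be chosen at the end; I seek a bound $|\nabla^k \Rm|(x,t) \le C_k \, d^{-2-k}$ with $C_k$ independent of $x$ and $t$, and will recover the stated form after renaming $\epsilon'$ as $\epsilon$ and enlarging $R$.

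First I would localize the curvature estimate. For $y \in B_0(x, \frac12 d)$ one has $d_0(y,x_0) \ge \frac12 d \ge R$ as soon as $d \ge 2R$, so Proposition \ref{6.5} gives $|\Rm(y,s)| \le (\epsilon d_0(y,x_0))^{-2} \le 4 \epsilon^{-2} d^{-2}$ for all $s \le \frac{\epsilon}{4} d^2$. Hence $|\Rm| \le 4\epsilon^{-2} d^{-2}$ on the parabolic neighborhood $B_0(x,\frac12 d) \times [0, \frac{\epsilon}{4} d^2]$. Second, I would record initial derivative bounds: since $g_X(0)$ is asymptotically conical, $|\nabla^j \Rm|(\cdot,0) = O(d_0(\cdot,x_0)^{-2-j})$ as already used in the proof of Proposition \ref{6.4}, so on the same ball $|\nabla^j \Rm|(\cdot,0) \le C_j' \, d^{-2-j}$ for each $j$.

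Next I would rescale parabolically by $\lambda = d$, setting $\hat g(s) = d^{-2} g_X(d^2 s)$, a Ricci flow for $s \in [0, \frac{\epsilon}{4}]$. Under this rescaling $|\widehat{\Rm}| \le 4\epsilon^{-2}$ on $\widehat{B}_0(x,\frac12) \times [0, \frac{\epsilon}{4}]$ and $|\widehat{\nabla}^j \widehat{\Rm}|(\cdot,0) \le C_j''$ on $\widehat{B}_0(x,\frac12)$, with all constants independent of $d$. The problem is thus reduced to a fixed-scale local derivative estimate. For $s$ bounded below, say $s \ge \frac{\epsilon}{16}$, the interior form of Shi's local derivative estimate applied on this parabolic cylinder bounds $|\widehat{\nabla}^k \widehat{\Rm}|(x,s)$; for small $s$ the local derivative estimate of \cite[Appendix D]{Kleiner-Lott}, which converts a curvature bound together with initial derivative bounds into derivative bounds for small positive time, covers the remaining interval $[0, \frac{\epsilon}{16}]$. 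Either way one gets $|\widehat{\nabla}^k \widehat{\Rm}|(x,s) \le \widehat{C}_k$ for all $s \in [0, \frac{\epsilon}{8}]$, uniformly in $d$. Undoing the rescaling yields $|\nabla^k \Rm|(x,t) = d^{-2-k} |\widehat{\nabla}^k \widehat{\Rm}|(x, t/d^2) \le \widehat{C}_k \, d^{-2-k}$ whenever $t \le \frac{\epsilon}{8} d^2$, and after replacing $R$ by $2R$ and $\epsilon$ by $\frac{\epsilon}{8}$ the proposition follows.

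I expect the main obstacle to be obtaining the bound uniformly down to $t = 0$: the plain interior Shi estimate carries a factor $s^{-k/2}$ that diverges as $s \to 0$, so one must feed in the initial derivative bounds supplied by the asymptotically conical hypothesis. This is precisely the role of the Kleiner--Lott estimate, and without the initial derivative decay the argument would only control $|\nabla^k \Rm|$ at times bounded away from zero, which is insufficient for the blowdown regime $t \le \epsilon d^2$.
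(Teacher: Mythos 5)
Your proposal is correct and follows essentially the same route as the paper: both split the parabolic time range into $t \ge \epsilon_k d_0(x,x_0)^2$, handled by Shi's interior derivative estimates from the curvature bound of Proposition \ref{6.5}, and $t \le \epsilon_k d_0(x,x_0)^2$, handled by the local derivative estimate of \cite[Appendix D]{Kleiner-Lott} fed with the initial derivative decay coming from the asymptotically conical hypothesis. The only difference is that you make the parabolic rescaling by $d_0(x,x_0)$ explicit, which the paper leaves implicit in the scale-invariant form of the estimates.
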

\begin{proof}
Given $k > 0$, Proposition \ref{6.5} and Shi's local derivative estimates
imply that for any $\epsilon_k \in (0, \epsilon)$, there is a bound
$|\nabla^k \Rm| \le C^\prime_k(\epsilon_k) d_0(x,x_0)^{-2-k}$
whenever $d_0(x,x_0) \ge 2R$ and 
$\epsilon_k d_0(x,x_0)^2 \le t \le \epsilon d_0(x,x_0)^2$.
Now by the smooth convergence in Definition \ref{6.1},
we know that there is some $C_k^\prime < \infty$ so that
$|\nabla^k \Rm|(x,0) \le C_k^\prime d_0(x,x_0)^{-2-k}$
whenever $d_0(x,x_0) \ge R$. From the local derivative estimate in
\cite[Appendix D]{Kleiner-Lott}, there are some $\epsilon_k > 0$
and $C_k^{\prime \prime} < \infty$ so that 
$|\nabla^k \Rm| \le C^{\prime \prime}_k d_0(x,x_0)^{-2-k}$
whenever $d_0(x,x_0) \ge 2R$ and $t \le \epsilon_k d_0(x,x_0)^2$.
The proposition follows.
\end{proof}

\begin{proposition} \label{6.7}
Let $\epsilon$ be as in Proposition \ref{6.6}.
For any sequence $r_i \rightarrow \infty$, after passing to a subsequence
there is a smooth Ricci flow solution $g_\infty$ defined on
$\{(x,t) \in (CY - \star) \times [0, \infty)  : 
0 \le t \le \epsilon d(x,\star)^2 \}$ so that
\begin{enumerate} [\upshape (i)]
\item $g_\infty(0) = g_{CY}$ and 
\item $\lim_{i \rightarrow \infty} \frac{1}{r_i^2} g(r_i^2 t) =
g_\infty(t)$, with smooth convergence on annuli.
\end{enumerate}

More precisely, given $0 < R_1 < R_2 < \infty$, for large $i$ there
are smooth embeddings $\phi_{R_1, R_2, i} : (R_1, R_2) \times Y
\rightarrow X$ so that $\lim_{i \rightarrow \infty} \frac{1}{r_i^2} 
\phi_{R_1, R_2, i}^* g(r_i^2 t) = g_\infty(t)$, provided that
$t \le \epsilon R_2^2$.
\end{proposition}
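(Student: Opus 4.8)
The plan is to perform the parabolic rescaling $\widetilde{g}_i(t) = \frac{1}{r_i^2} g(r_i^2 t)$, verify that the hypotheses of Hamilton's compactness theorem hold uniformly in $i$ on the parabolic region lying over each annulus in $CY - \star$, and then diagonalize over an exhaustion of $CY - \star$ by annuli. First I would carry out the scaling bookkeeping. Since $|\Rm_{\widetilde{g}_i}|(x,t) = r_i^2 |\Rm_g|(x, r_i^2 t)$ while the rescaled time-zero distance is $\frac{1}{r_i} d_0(x,x_0)$, writing $d_0(x,x_0) = \rho r_i$ turns Proposition \ref{6.5} into $|\Rm_{\widetilde{g}_i}|(x,t) \le (\epsilon \rho)^{-2}$ on exactly the region $t \le \epsilon \rho^2$ (the constraint $d_0 \ge R$ becomes $\rho \ge R/r_i$, which holds for large $i$ once $\rho$ is bounded below). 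This is precisely the target region $\{t \le \epsilon d(\cdot,\star)^2\}$ in the rescaled picture. Proposition \ref{6.6} likewise yields $|\nabla^k \Rm_{\widetilde{g}_i}| \le C_k \rho^{-2-k}$, so over any fixed annulus $(R_1,R_2) \times Y$ one has uniform $C^\infty$ control of the curvature on the admissible part $\{t \le \epsilon \rho^2\}$ of the parabolic region.

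Next I would establish the noncollapsing needed to invoke compactness. At $t = 0$, Definition \ref{6.1} (with $\lambda = r_i$) provides smooth convergence of $\frac{1}{r_i} A(r_i R_1, r_i R_2)$ to the cone annulus $\overline{B(\star,R_2)} - B(\star,R_1)$, which in particular is noncollapsed with a definite injectivity radius lower bound; hence $\widetilde{g}_i(0)$ is uniformly noncollapsed on each annulus. Feeding the curvature bound of Proposition \ref{6.5} into the Ricci flow equation as in the proof of Proposition \ref{6.4} shows that $\widetilde{g}_i(t)$ and $\widetilde{g}_i(0)$ are $e^{\const\, t\,(\epsilon\rho)^{-2}}$-biLipschitz on the region in question; on the admissible part $t \le \epsilon\rho^2$ the exponent is bounded by $\const/\epsilon$, so the biLipschitz factor, and therefore the noncollapsing, is uniform in $i$ for all admissible times.

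With uniform curvature bounds, derivative bounds and an injectivity radius lower bound in hand over each annulus, I would apply a local version of Hamilton's compactness theorem \cite{Hamilton2} to extract, after passing to a subsequence, smooth embeddings $\phi_{R_1,R_2,i} : (R_1,R_2) \times Y \rightarrow X$ and a limiting Ricci flow $g_\infty$ with $\frac{1}{r_i^2}\phi_{R_1,R_2,i}^* g(r_i^2 t) \to g_\infty(t)$ for $t$ in the admissible range $t \le \epsilon R_2^2$. Arranging the embeddings to coincide at $t=0$ with the convergence maps of Definition \ref{6.1} forces $g_\infty(0) = g_{CY}$ on the annulus. A diagonal argument over a sequence of annuli exhausting $CY - \star$ then produces a single subsequence and a Ricci flow $g_\infty$ on all of $\{(x,t) \in (CY - \star)\times[0,\infty) : 0 \le t \le \epsilon d(x,\star)^2\}$ with $g_\infty(0) = g_{CY}$, which is the assertion of the proposition.

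The main obstacle is the bookkeeping forced by the non-compactness of the annular regions together with the point-dependent time interval. Because the bound $(\epsilon\rho)^{-2}$ blows up as $\rho \to 0$ and the admissible time $\epsilon\rho^2$ shrinks, no global compactness statement applies; one must work annulus by annulus with a genuinely interior version of the compactness theorem and keep track of the fact that the time interval of convergence grows with the outer radius $R_2$. The delicate point is matching the compactness-theorem embeddings with the time-zero conical maps of Definition \ref{6.1}, so that the limit is honestly $g_{CY}$ at $t=0$ rather than merely isometric to it; this is handled exactly as the gluing of local limits in the proof of Proposition \ref{6.4}.
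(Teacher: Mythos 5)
Your proposal is correct and follows essentially the same route as the paper's (very terse) proof: the paper likewise derives the result from the curvature and derivative bounds of Propositions \ref{6.5} and \ref{6.6} together with Hamilton's compactness theorem applied via a diagonal argument over annuli, with the time-zero limit identified as the asymptotic cone $CY$. You have simply supplied the scaling bookkeeping, the noncollapsing/biLipschitz justification, and the matching of embeddings that the paper leaves implicit.
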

\begin{proof}
This follows from Proposition \ref{6.6} and the proof of
Hamilton's compactness theorem \cite{Hamilton2}. In our case,
we apply a diagonal argument to annuli; c.f. \cite[Section 2]{Hamilton2}. 
Note that 
$g_\infty(0)$ is defined on the complement of the vertex in the
asymptotic cone of $(X, g_X(0))$. The asymptotic cone is $CY$.
\end{proof}

\begin{proposition} \label{6.8}
For any $t \ge 0$, $g_\infty(t)$ is asymptotically conical with 
asymptotic cone $CY$.
\end{proposition}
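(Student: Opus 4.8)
The plan is to exploit the exact scale invariance of the cone metric $g_{CY}$ together with the fact that, far out the end, the flow $g_\infty(\cdot)$ has very small curvature and runs for a time that is tiny compared with the local curvature scale. Consequently a time slice $g_\infty(t)$ deviates from $g_{CY}$ less and less as one moves toward infinity, so that blowing it down recovers exactly $CY$. Recall that $g_\infty(0)=g_{CY}$ and that $g_\infty(t)$ is only defined on $\{d(x,\star)>\sqrt{t/\epsilon}\}$, i.e. outside a ball about the vertex; this is harmless, since asymptotic conicality is a statement about the end.

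First I would transfer the curvature estimates to $g_\infty$. The bounds $|\nabla^k \Rm|\le C_k\, d_0(x,x_0)^{-2-k}$ on $\{t\le \epsilon\, d_0(x,x_0)^2\}$ from Proposition \ref{6.6} are invariant under the parabolic rescalings $\frac{1}{r_i^2}g(r_i^2\cdot)$ used to build $g_\infty$, because $|\nabla^k\Rm|$ and $d^{-2-k}$ scale identically and the constraint $t\le\epsilon\,d^2$ is scale invariant. Passing to the limit via the smooth convergence on annuli of Proposition \ref{6.7}, I obtain
\[
|\nabla^k \Rm_{g_\infty}|(x,t)\ \le\ C_k\, d(x,\star)^{-2-k}\qquad \text{for } 0\le t\le \epsilon\, d(x,\star)^2 ,
\]
where $d(\cdot,\star)$ is the distance in $g_{CY}=g_\infty(0)$, so $d(x,\star)=R$ in cone coordinates.

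Next I would bring in the radial dilation. Let $s_\mu:CY\to CY$ multiply the radial coordinate by $\mu$, so that $\frac{1}{\mu^2}s_\mu^* g_{CY}=g_{CY}$, and set $\tilde g_\mu(t)=\frac{1}{\mu^2}s_\mu^* g_\infty(t)$. Since $s_\mu$ is a fixed diffeomorphism and $\Ric$ is scale invariant, $\tilde g_\mu$ solves the slowed flow $\partial_t\tilde g_\mu=-\tfrac{2}{\mu^2}\Ric(\tilde g_\mu)$ with $\tilde g_\mu(0)=\frac{1}{\mu^2}s_\mu^* g_{CY}=g_{CY}$. On a fixed annulus $A(R_1,R_2)$ the map $s_\mu$ lands in $A(\mu R_1,\mu R_2)$, where the displayed estimate gives $|\nabla^k\Rm_{\tilde g_\mu}|\le C_k R_1^{-2-k}$ uniformly in $\mu$ (the powers of $\mu$ cancel). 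For fixed $t$ and $\mu$ large, $A(\mu R_1,\mu R_2)$ lies in the existence region $\{t\le\epsilon\,d^2\}$, so $\tilde g_\mu(t)$ is defined there. The driving term $\tfrac{2}{\mu^2}\Ric(\tilde g_\mu)$, together with all its covariant derivatives, is then $O(\mu^{-2})$ uniformly on the annulus, so integrating from $t=0$ yields $\|\tilde g_\mu(t)-g_{CY}\|_{C^k}=O(t\,\mu^{-2})\to 0$. Thus $\frac{1}{\mu^2}s_\mu^* g_\infty(t)\to g_{CY}$ in $C^\infty$ on every annulus, which is exactly the smooth-limit condition of Definition \ref{6.1}; since the inner boundary $\{d=\sqrt{t/\epsilon}\}$ is pushed to $\star$ under $s_\mu^{-1}$, the pointed Gromov--Hausdorff limit is the full cone $CY$.

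The hard part will be the bookkeeping in the first step: one must verify that the estimates of Proposition \ref{6.6} genuinely survive the blowdown up to the parabolic boundary $t=\epsilon\,d^2$ (rather than only on a smaller time range), and that the undefined region near $\star$ for $t>0$ interferes neither with the smooth convergence on annuli nor with the Gromov--Hausdorff statement. Once the scale-invariant bounds are secured, the slow-flow estimate makes the remainder routine.
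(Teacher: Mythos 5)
Your proposal is correct and follows essentially the same route as the paper: transfer the scale-invariant bounds $|\nabla^k \Rm| \le C_k d(x,\star)^{-2-k}$ to $g_\infty$, then observe that integrating the flow equation shows $g_\infty(t)$ and $g_\infty(0)=g_{CY}$ differ by a factor tending to $1$ as $d(x,\star)\to\infty$, so the asymptotic cone is unchanged. The paper phrases the second step as an $e^{\const\, t\, d_{CY}(x,\star)^{-2}}$-biLipschitz equivalence combined with smooth subconvergence, whereas you derive the $C^k$ convergence of the blowdowns directly from the slowed flow; this is a quantitative refinement of the same estimate, not a different argument.
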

\begin{proof}
We first remark that the estimate
$|\nabla^k \Rm| \le C_k d_0(x,x_0)^{-2-k}$ on 
$g_X(0)$, for
$k \ge 0$, passes to $g_\infty$. Hence for any $t \ge 0$ and any
sequence $s_j \rightarrow
\infty$, after passing to a subsequence the limit
$\lim_{j \rightarrow \infty} \left( X, x_0, \frac{1}{s_j^2} g_\infty(t)
\right) = CY$ is smooth away from $\star \in CY$. 
At $x \in CY - \star$, the metrics $g_\infty(t)$ and $g_\infty(0)$ are
$e^{\const t d_{CY}(x,\star)^{-2}}$-biLipschitz equivalent.  It follows that
any asymptotic cone of $g_\infty(t)$ is isometric to the
asymptotic cone of $g_\infty(0)$, which is $CY$.
\end{proof}

\begin{example} \label{6.9}
Suppose that $(X, g_0)$ is an asymptotically conical  Ricci-flat manifold.
Its asymptotic cone is the Ricci-flat cone $CY$.
The Ricci flow starting from $g_0$ is the static Ricci flow $g_X(t) = g_0$.
The blowdown limit is the static Ricci flow
$g_\infty(t) = g_{CY}$. This is an expanding gradient soliton with
respect to the function $f = \frac{R^2}{4t}$.
\end{example}

\begin{example} \label{6.10}
Consider the metrics constructed in
\cite[Section 5]{FIK}. They live on a $n$-dimensional complex manifold 
$X$, $n \ge 2$, which
is a complex line bundle over $\C P^{n-1}$. Given
$k > n$ and $p \in \R^+$, there is a Ricci flow solution
whose metric on $X - \C P^{n-1}$ is the
$\Z_k$-quotient of the following metric on $\C^n$ :
\begin{align} \label{6.11}
g_X(t) = & \left\{ |z|^{-2+2p} B \left( 
4(t+t_0)|z|^{- 2p} \right)
\delta_{\alpha \overline{\beta}} + \right. \\ 
& \left. \left[ (p-1) |z|^{2p} B \left( 
4(t+t_0)|z|^{- 2p} \right)
- 4(t+t_0)p B^\prime \left( 4(t+t_0)|z|^{- 2p} \right) \right]
|z|^{-4} z^{\overline{\alpha}} z^\beta \right\} dz^\alpha \:
dz^{\overline{\beta}}. \notag 
\end{align}
Here $t_0 > 0$ and $B$ is a certain smooth function with $B(0) > 0$.
Going out the end of $X$ corresponds to taking $z \rightarrow \infty$.
For all $t \ge 0$, the metric $g_X(t)$ is asymptotically conical with
asymptotic cone $CY = \C^n/\Z_k$,
where the $\Z_k$ action on $\C^n$ is multiplication by scalars,
and the
metric on the asymptotic cone is
\begin{equation} \label{6.12}
g_{CY} = B(0) \left\{ |z|^{-2+2p} 
\delta_{\alpha \overline{\beta}} + 
(p-1) |z|^{2p}
|z|^{-4} z^{\overline{\alpha}} z^\beta \right\} dz^\alpha \:
dz^{\overline{\beta}}. 
\end{equation}

Given a sequence $r_i \rightarrow \infty$, put
$\phi_i(z) = r_i^{\frac{1}{p}} z$. Then the blowdown limit is
\begin{equation}
\lim_{i \rightarrow \infty} \frac{1}{r_i^2} \phi_i^* g_X(r_i^2 t) = 
g_\infty(t),
\end{equation}
where
\begin{align} \label{6.13}
g_\infty(t) = & \left\{ |z|^{-2+2p} B \left( 
4t|z|^{- 2p} \right)
\delta_{\alpha \overline{\beta}} + \right. \\ 
& \left. \left[ (p-1) |z|^{2p} B \left( 
4t|z|^{- 2p} \right)
- 4tp B^\prime \left( 4t|z|^{- 2p} \right) \right]
|z|^{-4} z^{\overline{\alpha}} z^\beta \right\} dz^\alpha \:
dz^{\overline{\beta}}. \notag 
\end{align}
For $t > 0$, this is an gradient expanding soliton solution.
At $t=0$, we have $g_\infty(0) = g_{CY}$.
\end{example}

\begin{example} \label{6.14}
With reference to Example \ref{6.9}, in the case $n=2$ 
let $g_0$ be a $k$-center Eguchi-Hanson metric
\cite{GH}. Then $CY = \C^2/\Z_k$,
where a generator of $\Z_k$ acts on $\C^2$ by
$\begin{pmatrix}
e^{\frac{2 \pi \sqrt{-1}}{k}} & 0 \\
0 & e^{\frac{- 2 \pi \sqrt{-1}}{k}}
\end{pmatrix},$
and $g_\infty(\cdot)$ is the static flow on $CY - \star$.
\end{example}

\begin{example}
In \cite{Schulze-Simon}, it was shown that if $g_X(0)$ has
nonnegative curvature operator then any blowdown limit is
a {\em smooth} gradient expanding soliton.
In \cite[Remark 7.3]{Cabezas-Rivas-Wilking}, this result was extended to
nonnegative complex sectional curvture.
In \cite{Chodosh-Fong}, it was shown that in the K\"ahler case,
if one assumes positive holomorphic bisectional curvature
then the soliton is $U(n)$-invariant.
\end{example}

\subsection{Conical standard spatial asymptotics}
\label{subsection5.3}

We now specialize to the K\"ahler case.
Let $\overline{X}$ be a compact connected $n$-dimensional complex manifold.
Let $D$ be a smooth effective divisor in $\overline{X}$.
Let $L_D$ be the holomorphic line bundle on $\overline{X}$ associated
to $D$. There is a holomorphic section $\sigma$ of $L_D$ with zero set
$D$, which is nondegenerate at $D$.  It is unique up to multiplication
by a nonzero complex number. 
Let $h$ be a Hermitian metric on $L_D$.
Let $\nabla$ be the corresponding Hermitian holomorphic connection.
Let $\nabla_D$ be its restriction to $D$.
Let $L_D^1$ denote the unit circle bundle
of $L_D$.

Suppose that $\overline{x} \in D$.
There are a neighborhood
$U$ of $\overline{x}$ in $\overline{X}$ and
a biholomorphic map
$F_{\overline{x}} : \Delta^n \rightarrow U$
so that
\begin{enumerate} [\upshape (i)]
\item $F_{\overline{x}}(0) = \overline{x}$, and
\item $F_{\overline{x}} \left( \Delta^* \times
\Delta^{n-1} \right) = U \cap X$.
\end{enumerate}

The map $G_{\overline{x}}$ on
$\Delta^{n-1}$, given by $G_{\overline{x}}(w) = 
F_{\overline{x}}(0,w)$, is a biholomorphic map from $\Delta^{n-1}$
to a neighborhood of $\overline{x}$ in $D$.
Let $z$ the local coordinate on $U$ coresponding to the first factor
in $\Delta^n$.

Given $r \in \R^+$, let $\alpha_{r} : \Delta^* \rightarrow \Delta^*$ be 
multiplication by $r$.
If $Z$ is an auxiliary space
then we also write $\alpha_{r}$ for $(\alpha_{r}, \Id) : 
\Delta^* \times Z \rightarrow \C \times Z$.

\begin{defin} \label{7.1}
Let $\omega_D$ be a K\"ahler metric
on $D$. Let $h_D$ be the restriction of $h$ to $L_D \big|_D$.
We identify
$z$ with a local multiple of $\sigma$. We say that 
$\omega_X$ has {\em conical standard spatial asymptotics} associated
to $(\omega_D, h_D)$ if for
every $\overline{x} \in D$ and every local parametrization
$F_{\overline{x}}$,
after possibly multiplying $z$ by a constant, we have
\begin{equation} \label{7.2}
\lim_{r \rightarrow 0} 
\frac{1}{r^2}
\alpha_{r}^* 
F_{\overline{x}}^* \omega_X =
\sqrt{-1} \frac{h_D(\nabla_D z \wedge \overline{\nabla_D z})}{|z|_h^4} +
\frac{G_{\overline{x}}^* \omega_D}{|z|_h^2}.
\end{equation}
The limit in (\ref{7.2}) means smooth convergence on
any subset $\{z \in \Delta^* : 0 < R_1 < |z| < R_2 < 1\} \times \Delta^{n-1}$
of $\Delta^* \times \Delta^{n-1}$. 
\end{defin}

Going out the end of $X$ corresponds to taking $z \rightarrow 0$.
One can check that 
the notion of conical standard spatial asymptotics in
Definition \ref{7.1} is consistent under change of local coordinate.

\begin{proposition} \label{7.3}
If $(X, \omega_X)$ has conical standard spatial asymptotics then it is
asymptotically conical in the sense of Definition \ref{6.1}. The asymptotic
cone is $CY$, where $Y$ is the unit circle bundle $L^1_D$.
\end{proposition}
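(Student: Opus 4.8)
The plan is to reduce the statement to an explicit identification of the right-hand side of (\ref{7.2}) with a Riemannian cone, and then to transport the rescaled convergence of Definition \ref{7.1} into the form demanded by Definition \ref{6.1}. First I would set $\rho = |\sigma|_h$ (in a chart, $\rho = |z|_h$), a global function on $X$ vanishing to first order along $D$, and put $R = \rho^{-1}$, so that $R \to \infty$ precisely as one leaves every compact set. Writing $\eta = \nabla_D z / z$ for the logarithmic covariant derivative, the Chern connection gives the decomposition $\eta = d\rho/\rho + \sqrt{-1}\,\alpha$, where $\alpha$ is the real connection $1$-form on the unit circle bundle $L_D^1$ (with $d\alpha$ a multiple of $\omega_D$ pulled back from $D$). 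The corresponding Hermitian metric is $|\eta|^2 = (d\rho/\rho)^2 + \alpha^2$, and since $h_D(\nabla_D z\wedge\overline{\nabla_D z}) = |z|_h^2\,\eta\wedge\overline{\eta}$, the first term on the right of (\ref{7.2}), regarded as a Riemannian metric, equals $\rho^{-2}\big((d\rho/\rho)^2 + \alpha^2\big)$, while the second term is $\rho^{-2}\,\pi^* g_D$, with $g_D$ the metric of $\omega_D$. Adding these and substituting $R = \rho^{-1}$ (so $dR^2 = \rho^{-4}d\rho^2$ and $\rho^{-2} = R^2$) yields
\[
g_{\mathrm{model}} = dR^2 + R^2\,(\alpha^2 + \pi^* g_D) = dR^2 + R^2 g_Y,
\]
where $g_Y = \alpha^2 + \pi^* g_D$ is the natural metric on $Y = L_D^1$. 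Thus the model metric is exactly the cone metric $g_{CY}$ on $CY - \star$, with radial coordinate $R = |\sigma|_h^{-1}$.

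Next I would match the two rescaling conventions. Under $R = |z|_h^{-1}$, multiplying $z$ by $r$, i.e. applying the map $\alpha_r$ of Definition \ref{7.1}, corresponds to the Riemannian dilation of Definition \ref{6.1} with $\lambda = r^{-1}$, and the metric prefactor in (\ref{7.2}) is precisely the power needed to produce the Riemannian normalization $\lambda^{-2}$. Concretely, the region $\{\,R_1 < R < R_2\,\}$ of the cone pulls back to the fixed chart region $\{\,R_2^{-1} < |z|_h < R_1^{-1}\,\}\times\Delta^{n-1}$, so for $\lambda_i\to\infty$ the rescaled annuli $\tfrac1{\lambda_i}A(\lambda_i R_1,\lambda_i R_2)\subset X$ are, up to errors vanishing in the limit, the images under $\alpha_{\lambda_i^{-1}}^* F_{\overline{x}}^*$ of this one annulus in the $z$-variable. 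Statement (\ref{7.2}) then says precisely that, in each chart, $\tfrac1{\lambda_i^2}g_X$ converges smoothly to $g_{\mathrm{model}} = g_{CY}$ on $\{R_1 < R < R_2\}\times Y$; here the transverse circles in the various charts are, by construction, the fibers of $L_D^1$, so the limiting base is globally $Y = L_D^1$.

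Finally I would globalize and pass to Gromov--Hausdorff convergence. Because $D$ is compact, cover it by finitely many charts $F_{\overline{x}_a}$; the chartwise smooth limits from (\ref{7.2}) agree on overlaps, by the consistency of Definition \ref{7.1} under change of local coordinate already noted, and hence glue to a single smooth limit of $\tfrac1{\lambda^2}g_X$ on the global annulus $A(\lambda R_1,\lambda R_2)$, identified with $\overline{B(\star,R_2)} - B(\star,R_1)\subset CY$; this is exactly (\ref{6.2}). Letting $R_1\to 0$ and $R_2\to\infty$ through a diagonal sequence, these annular regions exhaust $CY - \star$, while the bounded-$R$ core of $X$ collapses to the vertex $\star$ under the dilations; the uniform biLipschitz control furnished by the smooth convergence then upgrades the annular convergence to the pointed Gromov--Hausdorff limit $\lim_{\lambda\to\infty}(\tfrac1\lambda X, x_0, g_X) = (CY,\star,g_{CY})$, realized by the same maps. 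This verifies Definition \ref{6.1}.

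I expect the main obstacle to be the global gluing in the last step: one must check that the chartwise rescaled maps of Definition \ref{7.1} assemble into the single family of annulus embeddings required by Definition \ref{6.1} --- in particular that the local transverse circles cohere into the global circle bundle $L_D^1$, and that the distance-defined annuli $A(\lambda R_1,\lambda R_2)$ coincide, up to errors that vanish in the limit, with the level sets of $|\sigma|_h^{-1}$ --- and that the convergence is uniform across the finitely many charts covering the compact divisor $D$.
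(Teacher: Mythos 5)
Your proof is correct and follows essentially the same route as the paper: the core of both arguments is the local computation identifying the model metric on the right-hand side of (\ref{7.2}), via the substitution $R = |z|_h^{-1}$ and the connection $1$-form on $L_D^1$, with the cone metric $dR^2 + R^2 g_Y$ over the Sasaki metric on $Y = L_D^1$. The paper's proof in fact stops at that identification, whereas you additionally spell out the matching of the two rescaling conventions and the gluing/Gromov--Hausdorff step, which the paper leaves implicit.
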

\begin{proof}
In terms of the local coordinates $(w^1, \ldots, w^{n-1})$ on 
$\Delta^{n-1}$, there is a function $h(w)$ so that
$h_D(z,z) = h(w) |z|^2$. Then
\begin{align} \label{7.4}
\sqrt{-1} \frac{h_D(\nabla_D z \wedge \overline{\nabla_D z})}{|z|_h^4} +
\frac{G_{\overline{x}}^* \omega_D}{|z|_h^2} \: = \: &
\sqrt{-1} \frac{h (dz + z h^{-1} \partial_w h) \wedge 
(d\overline{z} + \overline{z} h^{-1} \partial_{\overline{w}} h)
}{(h|z|^2)^2} + \frac{\omega_D}{h|z|^2} \\
\: = \: &
\sqrt{-1} \frac{ \left( \frac{dz}{z} + \frac{\partial_w h}{h} \right) \wedge 
\left( \frac{d\overline{z}}{\overline{z}} + 
\frac{\partial_{\overline{w}}h}{h} \right)
}{h|z|^2} + \frac{\omega_D}{h|z|^2}. \notag
\end{align}

Write $z = \alpha e^{\sqrt{-1} \theta}$, so
\begin{equation} \label{7.5}
\frac{dz}{z} = \frac{d\alpha}{\alpha} + \sqrt{-1} d\theta.
\end{equation}
Define $R$ by 
\begin{equation} \label{7.6}
R^2 = \frac{1}{h |z|^2} = \frac{1}{h\alpha^2}.
\end{equation}
Then
\begin{equation} \label{7.7}
\frac{dR}{R} = - \frac{d\alpha}{\alpha} - 
\frac12 \frac{\partial_w h}{h} - \frac12 \frac{\partial_{\overline{w}} h}{h}.
\end{equation}
It follows that
\begin{equation} \label{7.8}
\frac{ \left( \frac{dz}{z} + \frac{\partial_w h}{h} \right)
\left( \frac{d\overline{z}}{\overline{z}} + 
\frac{\partial_{\overline{w} h}}{h} \right)
}{h|z|^2} + \frac{g_D}{h|z|^2} =
dR^2 + R^2 \left\{ \left[ d\theta + \frac{1}{2\sqrt{-1}} 
\left( \frac{\partial_w h}{h} - \frac{\partial_{\overline w} h}{h} \right)
\right]^2 + g_D \right\}.
\end{equation}
The right-hand side of (\ref{7.8}) is the conical metric on $CY - \star$,
where $Y = L^1_D$ has the Sasaki metric.
\end{proof}

\begin{proposition} \label{7.10}
Let $\omega_X(\cdot)$ be a K\"ahler-Ricci flow defined for $t \in [0, T)$,
with bounded curvature on compact time intervals.
Suppose that $\omega_X(0)$ has conical
standard spatial asymptotics associated to
$(\omega_D, h_D)$. Then for all $t \in [0, T)$, the metric
$\omega_X(t)$ has conical standard spatial asymptotics associated to
$(\omega_D, h_D)$. 
\end{proposition}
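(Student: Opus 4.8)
The plan is to argue by contradiction, following the blowdown scheme already used in Propositions \ref{5.7} and \ref{6.4}. Suppose the conclusion fails. Then there are some $t \in [0,T)$, a point $\overline{x} \in D$, a local parametrization $F_{\overline{x}}$, radii $0 < R_1 < R_2 < 1$, and a sequence $r_i \rightarrow 0$ so that, even after passing to any subsequence, the rescaled K\"ahler forms $\frac{1}{r_i^2} \alpha_{r_i}^* F_{\overline{x}}^* \omega_X(t)$ do not converge smoothly on $\{R_1 < |z| < R_2\} \times \Delta^{n-1}$ to the conical model on the right-hand side of (\ref{7.2}).

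First I would upgrade the time-zero curvature decay to one holding on the whole interval $[0,t]$. By Proposition \ref{7.3}, $(X,\omega_X(0))$ is asymptotically conical, so $|\nabla^k \Rm|(x,0) = O(d_0(x,x_0)^{-2-k})$ for every $k$. Taking $\phi$ to be a slight smoothing of $1 + d_0(\cdot, x_0)$, the conical geometry gives a bounded Hessian, so $\frac{\Hess(\phi)}{\phi}$ is bounded below and (\ref{4.3.5}) holds. Proposition \ref{4.7} (or, in the K\"ahler setting, Proposition \ref{B.15}) then yields $|\nabla^k \Rm|(x,t) = O(d_0(x,x_0)^{-2-k})$ on $[0,t]$, for every $k$; in particular the $k=0$ case gives the quadratic decay $|\Ric(x,s)| = O(d_0(x,x_0)^{-2})$ for $s \in [0,t]$.

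With these uniform bounds in hand, under the rescaling $\frac{1}{r_i^2}\alpha_{r_i}^*$ the annulus $\{R_1 < |z| < R_2\} \times \Delta^{n-1}$ maps to a region of fixed size in which the curvature and all its covariant derivatives are uniformly controlled; moreover, because $Y = L_D^1$ is a fixed compact link, annuli in the conical scaling do not collapse and there is a uniform lower injectivity-radius bound. Passing to a subsequence, Hamilton's compactness theorem produces a smooth limit $\omega_{\infty}(t)$. Since $\alpha_{r}$ and $F_{\overline{x}}$ are holomorphic, the complex structure underlying all the rescaled forms is the fixed standard one on $\{R_1 < |z| < R_2\} \times \Delta^{n-1}$, so smooth convergence of K\"ahler forms is the same as smooth convergence of the associated metric tensors. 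The Ricci flow equation together with the quadratic decay shows that $\omega_X(t)$ and $\omega_X(0)$ are $e^{\const\, t\, d_0(x,x_0)^{-2}}$-biLipschitz at each point $x$; applied along the sequence of points going out the end, where $d_0 \rightarrow \infty$, this factor tends to $1$. As it is a pointwise bound on the metric tensors in the fixed coordinates, the identity map realizes the convergence and forces the time-$t$ and time-$0$ blowdowns to have the same limit. The time-zero limit is, by hypothesis, the conical model of (\ref{7.2}) with data $(\omega_D, h_D)$, so $\omega_{\infty}(t)$ equals that same model, contradicting the choice of the $r_i$.

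The decisive technical input I expect to wrestle with is the preservation of the full higher-derivative decay $|\nabla^k \Rm|(x,t) = O(d_0(x,x_0)^{-2-k})$ through Proposition \ref{4.7} or \ref{B.15} --- in particular verifying (\ref{4.3.5}) for the conical distance-like function and supplying the required decay of the first derivatives of the initial curvature tensor in the K\"ahler case. By contrast, the identification of the blowdown limit, which might at first seem to recover only an abstract isometry class, is comparatively clean here: since the complex structure is frozen, the biLipschitz defect --- vanishing to first order as one goes out the end --- is a bound on $\omega_X(t) - \omega_X(0)$ as tensors in fixed holomorphic coordinates, and this directly yields equality of the two blowdown forms, including the refined data $(\omega_D, h_D)$ recorded in Definition \ref{7.1}. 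The whole argument runs exactly parallel to the bulging and Riemannian conical cases already treated.
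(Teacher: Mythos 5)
Your proposal is correct and follows exactly the route the paper intends: the paper's own proof of Proposition \ref{7.10} simply states that it is ``similar to that of Proposition \ref{5.7},'' i.e.\ the contradiction/blowdown scheme with curvature-decay preservation via Propositions \ref{4.7}/\ref{B.15} and the $e^{\const\, t\, d_0^{-2}}$-biLipschitz identification of the time-$t$ and time-$0$ limits, which is precisely what you carry out (with the correct simplification that, unlike the bulging case, the conical annuli do not collapse, so Hamilton compactness applies directly as in Propositions \ref{6.4}--\ref{6.7}).
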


\begin{proof}
The proof is similar to that of Proposition \ref{5.7}.
We omit the details.
\end{proof}

\subsection{Conical superstandard spatial asymptotics} \label{subsec7.2}
\label{subsection5.4}

We now assume that $D$ is ample.
Let $h$ be a Hermitian metric on $L_D$ with positive curvature form.
Let $\omega_D$ be the restriction, to $D$, of the curvature form
associated to $h$.
As before, $\sigma$ is a holomorphic section of $L_D$ with zero-set $D$.

\begin{defin} \label{7.11}
A K\"ahler metric $\omega_X$ on $X$ has {\em conical superstandard spatial
asymptotics} associated to $h$ and a number $k \in \R$
if it has conical
standard spatial asymptotics (associated to $(\omega_D, h_D)$) and
\begin{equation} \label{7.9}
\omega_X = 
\eta_{\overline{X}} +
 {\sqrt{-1}} \partial \overline{\partial}  \left( 
|\sigma|_h^{-2} + k \log |\sigma|_h^{-2}
+ H \right), 
\end{equation}
where 
\begin{enumerate} [\upshape (i)]
\item $\eta_{\overline{X}}$ is a smooth closed $(1,1)$-form on
$\overline{X}$, and
\item $H \in C^\infty(X) \cap L^\infty(X)$. 
\end{enumerate}
\end{defin}

\begin{remark} \label{7.13}
In \cite{tian-yau3}, in order to construct K\"ahler-Einstein metrics,
a class ${\sqrt{-1}} \partial \overline{\partial}  
|\sigma|_h^{-2\alpha}$ of model metrics was considered.
These metrics are also asymptotically conical. In terms of the asymptotic
cone $CY$, the manifold $Y$ is again a circle bundle over $D$ and
the parameter $\alpha$ determines the length of the
circle fiber in $Y$. For simplicity, we only consider the case
$\alpha = 1$.  The discussion below can be easily extended to 
general $\alpha > 0$.

Specializing the results of \cite{tian-yau3} to the case when $\alpha = 1$,
they showed that if $D$ admits a K\"ahler metric $\omega_D$ with 
$\Ric(\omega_D) = n \omega_D$ then there is a complete Ricci-flat K\"ahler
metric on $X$ \cite{tian-yau3}.
\end{remark}

Note that $\sqrt{-1} \partial \overline{\partial} \log |\sigma|_h^{-2}$ is 
bounded with respect to $g_{\overline{X}}$.

\begin{proposition} \label{7.14}
If $\overline{X}$ admits a K\"ahler metric then $X$ admits a
complete K\"ahler metric with conical superstandard spatial asymptotics.
\end{proposition}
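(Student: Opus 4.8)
The plan is to mimic the construction in the proof of Proposition \ref{5.17}, replacing the bulging model metric by the conical one. Since $D$ is ample, fix a Hermitian metric $h$ on $L_D$ whose curvature form $F_h = - \sqrt{-1} \partial \overline{\partial} \log |\sigma|_h^2$ is positive and whose restriction to $D$ is $\omega_D$, and set $\omega_X = \sqrt{-1} \partial \overline{\partial} |\sigma|_h^{-2}$. As in Proposition \ref{5.17}, the auxiliary K\"ahler metric on $\overline{X}$ plays no real role in the construction; all of the positivity comes from the ampleness of $D$. (If one wanted extra room away from $D$ one could add $K \omega_{\overline{X}}$ and absorb it into $\eta_{\overline{X}}$, but this is not needed here.)

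First I would check that $\omega_X$ is a K\"ahler form. Writing $\phi = \log |\sigma|_h^{-2}$, one has the pointwise identity
\[ \sqrt{-1} \partial \overline{\partial} |\sigma|_h^{-2} = |\sigma|_h^{-2} \left( \sqrt{-1} \partial \phi \wedge \overline{\partial} \phi + F_h \right). \]
The first term inside the parentheses is a nonnegative $(1,1)$-form, and $F_h$ is positive definite on all of $\overline{X}$, so $\omega_X > 0$ on $X$. Completeness is then checked exactly as in Proposition \ref{5.17}: in a local coordinate $z$ transverse to $D$ the $z\overline{z}$-component of $\omega_X$ grows like $|z|^{-4}$ as $z \to 0$, so the radial distance to $D$ diverges.

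Next I would verify that $\omega_X$ has conical standard spatial asymptotics in the sense of Definition \ref{7.1}, by a local computation mirroring (\ref{5.19})--(\ref{5.24}). Given $\overline{x} \in D$, choose $F_{\overline{x}}$ and write $|\sigma|_h^2 = a(z,w) |z|^2$ for a smooth positive function $a$ with $a(0,w)$ equal to the coefficient $h(w)$ appearing in the proof of Proposition \ref{7.3}. Since $\partial \overline{\partial}$ commutes with the holomorphic pullback $\alpha_r^* F_{\overline{x}}^*$, one obtains the rescaled form by substituting $z \mapsto rz$ in $a$ and in the power of $|z|$; as $r \to 0$ one has $a(rz,w) \to h(w)$, and on annuli the rescaled forms converge to $\sqrt{-1} \partial \overline{\partial} \frac{1}{h(w) |z|^2}$. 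By the computation already carried out in the proof of Proposition \ref{7.3}, this limit is precisely the right-hand side of (\ref{7.2}), so $\omega_X$ has conical standard spatial asymptotics associated to $(\omega_D, h_D)$.

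Finally, $\omega_X = \sqrt{-1} \partial \overline{\partial} |\sigma|_h^{-2}$ is already of the form (\ref{7.9}) with $\eta_{\overline{X}} = 0$, $k = 0$ and $H = 0$, so it has conical superstandard spatial asymptotics and the proposition follows. I expect the only genuine point to verify is the positivity of $\omega_X$: one must confirm that the rank-one form $\sqrt{-1} \partial \phi \wedge \overline{\partial} \phi$, which is nondegenerate in the direction transverse to $D$, together with $F_h$, which is positive definite along $D$, combine to a positive definite form after multiplication by $|\sigma|_h^{-2}$. This is the conical analogue of the positivity argument following (\ref{5.19}), and is routine once the displayed decomposition is in hand.
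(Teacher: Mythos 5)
Your proof is correct and follows essentially the same route as the paper: the paper also takes $\omega_X = \sqrt{-1}\,\partial\overline{\partial}\,|\sigma|_h^{-2}$, reads off positivity from the decomposition into the rank-one term plus $\sqrt{-1}F_h/|\sigma|_h^2$ (your identity with $\phi = \log|\sigma|_h^{-2}$ is the same formula in different notation, up to the placement of a factor of $\sqrt{-1}$ on $F_h$), verifies the conical standard asymptotics by the local computation with $z$ a multiple of $\sigma$, and observes that the superstandard form (\ref{7.9}) holds trivially with $\eta_{\overline{X}} = 0$, $k=0$, $H=0$. Your remark that the auxiliary metric $\omega_{\overline{X}}$ is not actually needed is also accurate.
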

\begin{proof}
Let $\omega_{\overline{X}}$ be a K\"ahler metric on $\overline{X}$. 
By assumption,
$\sqrt{-1} 
F_h = -\sqrt{-1}\partial\bar{\partial} \left( \log  |\sigma|_h^2 \right)$
is a positive $(1,1)$-form on $\overline X$. Put
\begin{equation} \label{7.15}
\omega_X =  {\sqrt{-1}} \partial \overline{\partial} 
|\sigma|_h^{-2}
= \sqrt{-1} \frac{
h(\nabla \sigma \wedge \overline{\nabla \sigma})
}{
|\sigma|_h^4} + \frac{\sqrt{-1} F_h}{|\sigma|_h^2}. 
\end{equation}
Taking the local coordinate $z$ to be $\sigma$,
\begin{equation} \label{7.16}
\lim_{r \rightarrow 0} \frac{1}{r^2} \alpha_r^* F_{\overline{x}}^* \omega_X =
\sqrt{-1} \frac{
h_D(\nabla_D z \wedge \overline{\nabla_D z})
}{
|z|_h^4} + \frac{\omega_D}{|z|_h^2}
\end{equation}
Hence $\omega_X$ has conical standard spatial asymptotics.
It clearly also has conical superstandard spatial asymptotics.
\end{proof}

We now show that the property of having conical superstandard
spatial asymptotics is preserved under the K\"ahler-Ricci flow.

\begin{proposition} \label{7.17}
Suppose that $\omega_X(0)$ has 
conical superstandard spatial asymptotics associated
to $(h,k)$.
Suppose that the K\"ahler-Ricci flow $\omega_X(t)$,
with initial K\"ahler metric $\omega_X(0)$, exists on a maximal time
interval $[0,T)$ in the sense of Theorem \ref{2.5}. Then for all
$t \in [0,T)$, $\omega_X(t)$ has conical
superstandard spatial asymptotics,
associated to $(h, k + n t)$.
\end{proposition}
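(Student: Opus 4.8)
The plan is to follow the template of Propositions \ref{3.23} and \ref{5.25}: express $\Ric(\omega_X(0))$ as a smooth closed form on $\overline{X}$ plus $\sqrt{-1} \partial \overline{\partial}$ of an explicit potential, and then read off the effect of the evolution $\omega_X(t) = \omega_t + \sqrt{-1} \partial \overline{\partial} u(t)$ on the superstandard form. The one genuinely new feature, compared with the bulging case, is that the Ricci potential now carries a logarithmic divergence along $D$, and it is exactly this divergence that pushes the parameter $k$ to $k + nt$.

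First I would choose a Hermitian metric $h_{K_{\overline{X}} \otimes L_D}$ on $K_{\overline{X}} \otimes L_D$ and, together with $h$, obtain a Hermitian metric $h_{K_{\overline{X}}}$ on $K_{\overline{X}}$. Exactly as in (\ref{5.26}), this gives
\begin{equation*}
\Ric(\omega_X(0)) = - \sqrt{-1} F\left( h_{K_{\overline{X}} \otimes L_D} \right) - \sqrt{-1} \partial \overline{\partial} \left( \log \frac{h_{K_{\overline{X}}} |\sigma|_h^2}{h_{K_X}} \right)
\end{equation*}
on $X$. I would set $\eta^\prime_{\overline{X}} = - \sqrt{-1} F(h_{K_{\overline{X}} \otimes L_D})$, a smooth closed $(1,1)$-form on $\overline{X}$, and $H^\prime = \log \frac{h_{K_{\overline{X}}} |\sigma|_h^2}{h_{K_X}}$.

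The crux is to determine the asymptotics of $H^\prime$. In the bulging case with $N=n$ the leading volume growth rendered $H^\prime$ bounded; the conical volume growth is different. Using the conical standard spatial asymptotics (\ref{7.2}), in a local parametrization $F_{\overline{x}}$ the limiting metric has one normal direction with coefficient $\sim |z|^{-4}$ and $(n-1)$ tangential directions with coefficient $\sim |z|^{-2}$, so the induced volume form satisfies $\det(g) \sim \const \cdot |\sigma|_h^{-2(n+1)}$; more precisely, the smooth convergence on annuli forces $\det(g) \cdot |\sigma|_h^{2(n+1)}$ to be bounded above and below with bounded derivatives. Since, in a local holomorphic frame, $h_{K_X}$ is the reciprocal of $\det(g)$, this shows
\begin{equation*}
H^\prime = n \log |\sigma|_h^{-2} + \widetilde{H}, \qquad \widetilde{H} \in C^\infty(X) \cap L^\infty(X).
\end{equation*}
This is the step I expect to be the main obstacle: pinning down the exact coefficient $n$ of the logarithm and verifying that the remainder is genuinely bounded with all derivatives, rather than merely identifying the leading term.

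Granting this, the argument concludes as in Proposition \ref{5.25}. Recalling $\omega_t = \omega_0 - t \Ric(\omega_0)$ and the superstandard form (\ref{7.9}) of $\omega_X(0)$, I would write
\begin{align*}
\omega_X(t) = \: & \omega_t + \sqrt{-1} \partial \overline{\partial} u(t) \\
= \: & \eta_{\overline{X}} - t \eta^\prime_{\overline{X}} + \sqrt{-1} \partial \overline{\partial} \left( |\sigma|_h^{-2} + k \log |\sigma|_h^{-2} + H + t H^\prime + u(t) \right).
\end{align*}
Substituting $t H^\prime = nt \log |\sigma|_h^{-2} + t \widetilde{H}$ collects the logarithmic terms into $(k + nt) \log |\sigma|_h^{-2}$, while $H + t \widetilde{H} + u(t) \in C^\infty(X) \cap L^\infty(X)$, using Proposition \ref{2.4} for the boundedness of $u(t)$. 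Thus $\omega_X(t)$ has the superstandard form with parameter $k + nt$; that it also retains conical standard spatial asymptotics is Proposition \ref{7.10}. This proves the proposition.
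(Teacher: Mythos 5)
Your proposal is correct and follows essentially the same route as the paper: the paper writes $\Ric(\omega_X(0)) = -\sqrt{-1}F(h_{K_{\overline{X}}\otimes L_D}) - \sqrt{-1}\partial\overline{\partial}\bigl( n\log|\sigma|_h^{-2} + H^\prime \bigr)$ with $H^\prime = \log\frac{h_{K_{\overline{X}}}|\sigma|_h^{2(n+1)}}{h_{K_X}}$ bounded by the conical standard asymptotics (\ref{7.2}), which is exactly your decomposition $H^\prime = n\log|\sigma|_h^{-2} + \widetilde{H}$ with the coefficient $n$ coming from $\det(g)\sim |\sigma|_h^{-2(n+1)}$. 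The step you flag as the main obstacle is handled in the paper by precisely this volume-form observation, and the rest of the argument is identical.
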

\begin{proof}
Choose a Hermitian metric $h_{K_{\overline{X}} \otimes L_D}$
on $K_{\overline{X}} \otimes L_D$.
Along with $h$, we obtain a Hermitian metric 
$h_{K_{\overline{X}}}$ on $K_{\overline{X}}$.
Then
\begin{equation} \label{7.18}
\Ric (\omega_X(0))  =  - \sqrt{-1} F \left( 
h_{K_{\overline{X}} \otimes L_D} \right)
 - \sqrt{-1} \partial \overline{\partial} \left(
n \log |\sigma|_{h}^{-2} +
\log
\frac{
h_{K_{\overline{X}}} |\sigma|_{h}^{2(n+1)}
}{
h_{K_X}
} \right)
\end{equation}
on $X$. 

Put $\eta^\prime_{\overline{X}} = - \sqrt{-1} F \left( 
h_{K_{\overline{X}} \otimes L_D} \right)$ and $H^\prime =
\log \frac{
h_{K_{\overline{X}}} |\sigma|_{h}^{2(n+1)}
}{
h_{K_X}
}$. By equation (\ref{7.2}), the conical
standard spatial asymptotics imply that $H^\prime \in C^\infty(X) \cap
L^\infty(X)$. 

Recall the definition of $\omega_t$ from (\ref{2.2}). We can write
\begin{align} \label{7.19}
\omega_X(t) \: = \: & \omega_t + \sqrt{-1} \partial \overline{\partial} u(t) \\
\: = \: & \eta_{\overline{X}} - t \eta^\prime_{\overline{X}} + \sqrt{-1}
\partial \overline{\partial} \left( 
|\sigma|_h^{-2} + (k + n t) 
\log |\sigma|_h^{-2}
+ H + t H^\prime + u(t) \right). \notag
\end{align}
The proposition follows.
\end{proof}

We now give a characterization of the first singularity time, if there
is one. The relevant ring of functions, for conical asymptotics, can
be characterized in the following way.

\begin{defin} \label{7.20}
The ring $C^\infty_{\cone}(X)$ consists of the smooth functions $f$ on
$X = \overline{X} - D$ so that for every $\overline{x} \in D$ and
every local parametrization $F_{\overline{x}}$, the pullback
$F_{\overline{x}}^* f \in C^\infty(\Delta^* \times \Delta^{n-1})$
is such that for any multi-index $(l_1, \overline{l}_1, \ldots,
l_{n}, \overline{l}_{n})$, the function
\begin{equation} \label{7.19.5}
\left(z^2  \frac{\partial}{\partial z} \right)^{l_1}
\left( \overline{z}^2 \frac{\partial}{\partial \overline{z}} 
\right)^{\overline{l}_1}
\left( z \frac{\partial}{\partial w^1} \right)^{l_{2}}
\left( \overline{z} \frac{\partial}{\partial \overline{w}^1} 
\right)^{\overline{l}_{2}}
\ldots
\left( z \frac{\partial}{\partial w^{n-1}} \right)^{l_n}
\left( \overline{z} \frac{\partial}{\partial \overline{w}^{n-1}} 
\right)^{\overline{l}_n}
F_{\overline{x}}^* f \notag
\end{equation}
is uniformly bounded.
\end{defin}

\begin{proposition} \label{7.21}
Suppose that $\omega_X(0)$ has conical
superstandard spatial asymptotics
associated to $(h,k)$.
Let $\eta_{\overline{X}} \in \Omega^{(1,1)}(\overline{X})$ 
be a smooth representative of the cohomology class represented by the
closed current
\begin{equation} \label{7.22}
\omega_X(0) -
\sqrt{-1} \partial \overline{\partial}
\left( |\sigma|_h^{-2} + k \log |\sigma|_h^{-2} \right)
\end{equation}
on $\overline{X}$.
Let $\eta^\prime_{\overline{X}} \in \Omega^{(1,1)}(\overline{X})$ be
a smooth representative of 
$- 2 \pi [K_{\overline{X}} + D] \in \HH^{(1,1)}(\overline{X})$.
Let $T_3$ be the supremum (possibly infinite) of the numbers $T^\prime$
for which there is some
$f_{T^\prime} \in C^\infty_{\cone}(X)$ so that
\begin{equation} \label{7.23}
\eta_{\overline{X}} - T^\prime \eta^\prime_{\overline{X}} + 
\sqrt{-1} \partial \overline{\partial} \left( 
|\sigma|_h^{-2} + (k+nT^\prime) \log |\sigma|_h^{-2} + f_{T^\prime} \right)
\end{equation}
is a K\"ahler form on $X$ which is biLipschitz to $\omega_X(0)$.
Then $T_3$ equals the numbers $T_1 = T_2$ of Theorem \ref{2.5}.
\end{proposition}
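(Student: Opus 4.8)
The plan is to follow the structure of the proof of Proposition~\ref{3.28}, establishing the two inequalities $T_3 \le T_2$ and $T_1 \le T_3$ separately; since $T_1 = T_2$ by Theorem~\ref{2.5}, these together force $T_3 = T_1 = T_2$. The only substantive differences from the cylindrical case are the singular model $|\sigma|_h^{-2} + k \log |\sigma|_h^{-2}$ in place of the $\log^2$ model, the shift of the logarithmic coefficient from $k$ to $k + nT^\prime$ dictated by Proposition~\ref{7.17}, and the use of $C^\infty_{\cone}(X)$ in place of $C^\infty_{\cyl}(X)$.

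For $T_3 \le T_2$, suppose $T^\prime$ is admissible for $T_3$, with witness $f_{T^\prime} \in C^\infty_{\cone}(X)$. Since the present $\eta_{\overline{X}}$ and $\eta^\prime_{\overline{X}}$ differ from those of Definition~\ref{7.11} and (\ref{7.18}) by $\sqrt{-1} \partial \overline{\partial}$ of smooth functions on $\overline{X}$, I would rewrite $\omega_{T^\prime}$ in the form of (\ref{7.19}), namely $\omega_{T^\prime} = \eta_{\overline{X}} - T^\prime \eta^\prime_{\overline{X}} + \sqrt{-1} \partial \overline{\partial} \left( |\sigma|_h^{-2} + (k + nT^\prime) \log |\sigma|_h^{-2} + H + T^\prime H^\prime \right)$ for some $H, H^\prime \in C^\infty(X) \cap L^\infty(X)$. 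Subtracting, the form (\ref{7.23}) equals $\omega_{T^\prime} + \sqrt{-1} \partial \overline{\partial} F_{T^\prime}$ with $F_{T^\prime} = f_{T^\prime} - H - T^\prime H^\prime$. The biLipschitz hypothesis together with the conical standard asymptotics of $\omega_X(0)$ bounds $\sqrt{-1} \partial \overline{\partial} F_{T^\prime}$ between $\pm\,\const\,\omega_X(0)$, so $|\triangle_{\omega_X(0)} F_{T^\prime}| \le \const$; bounded geometry and a positive injectivity radius then let elliptic regularity bound all covariant derivatives of $F_{T^\prime}$, producing exactly the data demanded by $T_2$.

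For $T_1 \le T_3$, suppose $T^\prime$ is admissible for $T_1$, with solution $u$ of (\ref{2.3}). Using the conical superstandard asymptotics of $\omega_X(0)$ and Proposition~\ref{7.17}, I would write $\omega_{T^\prime} + \sqrt{-1} \partial \overline{\partial} u(T^\prime) = \eta_{\overline{X}} - T^\prime \eta^\prime_{\overline{X}} + \sqrt{-1} \partial \overline{\partial} \left( |\sigma|_h^{-2} + (k + nT^\prime) \log |\sigma|_h^{-2} + f_{T^\prime} \right)$ with $f_{T^\prime} = H + T^\prime H^\prime + u(T^\prime)$. By Proposition~\ref{7.17} the left-hand side $\omega_X(T^\prime)$ has conical superstandard asymptotics and is biLipschitz to $\omega_X(0)$, so the same elliptic-regularity argument shows all covariant derivatives of $f_{T^\prime}$ are bounded, which is exactly membership in $C^\infty_{\cone}(X)$.

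The one point requiring genuine care, and the step I expect to be the main obstacle, is the equivalence that bounded covariant derivatives of $f$ with respect to $\omega_X(0)$ is the same as membership in $C^\infty_{\cone}(X)$. This rests on verifying that the operators defining the ring in Definition~\ref{7.20}, namely $z^2 \partial_z$, $\overline{z}^2 \partial_{\overline{z}}$, $z\,\partial_{w^j}$ and $\overline{z}\,\partial_{\overline{w}^j}$, have bounded $\omega_X(0)$-norm and, up to uniformly bounded factors, form a frame adapted to the conical metric. From (\ref{7.2}) the metric coefficients grow like $|z|^{-4}$ in the $z$-direction and $|z|^{-2}$ in the $w$-directions, whence $|z^2 \partial_z|_{\omega_X(0)}$ and $|z\,\partial_{w^j}|_{\omega_X(0)}$ are both $O(1)$; this is the conical counterpart of the role of $z^i \partial_{z^i}$ in the cylindrical proof, and it is precisely what matches the $C^\infty_{\cone}(X)$ condition to the covariant-derivative condition appearing in Theorem~\ref{2.5}.
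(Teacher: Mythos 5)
Your proposal is correct and follows exactly the route the paper intends: the paper's own proof of Proposition \ref{7.21} consists of the single sentence that it is similar to Proposition \ref{3.28}, and you have correctly supplied the details of that adaptation, including the shift $k \mapsto k+nT'$ forced by (\ref{7.18})--(\ref{7.19}) and the verification that the weighted vector fields $z^2\partial_z$, $z\,\partial_{w^j}$ defining $C^\infty_{\cone}(X)$ have bounded $\omega_X(0)$-norm under the conical asymptotics (\ref{7.2}).
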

\begin{proof}
The proof is similar to that of Proposition \ref{3.28}.  We omit the details.
\end{proof}

\begin{corollary}
Suppose that $\omega_X(0)$ has conical superstandard spatial
asymptotics.  If $[K_{\overline{X}} + (n+1) D] \ge 0$ then the flow exists
for all positive time.
\end{corollary}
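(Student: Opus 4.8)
The plan is to follow the template of Corollary \ref{3.38}, using Proposition \ref{7.21} to reduce the claim to showing that $T_3 = \infty$. Thus I would produce, for every $T^\prime > 0$, a function $f_{T^\prime} \in C^\infty_{\cone}(X)$ for which the form (\ref{7.23}) is a K\"ahler metric biLipschitz to $\omega_X(0)$. The one feature distinguishing this from the cylindrical case is that the coefficient of $\log |\sigma|_h^{-2}$ in (\ref{7.23}) itself depends on time, being $k + n T^\prime$; hence I cannot simply add a nonnegative multiple of $-\eta^\prime_{\overline{X}}$, but must treat the $-T^\prime \eta^\prime_{\overline{X}}$ term together with the $n T^\prime \log |\sigma|_h^{-2}$ term.

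The key step is cohomological. On $X$ one has $\sqrt{-1} \partial \overline{\partial} \log |\sigma|_h^{-2} = \sqrt{-1} F_h$, the curvature form of $h$, which extends to a smooth closed $(1,1)$-form on $\overline{X}$ representing $2\pi [D] \in \HH^{(1,1)}(\overline{X})$. Therefore the combined $T^\prime$-dependent contribution to (\ref{7.23}) is
\begin{equation}
- T^\prime \eta^\prime_{\overline{X}} + \sqrt{-1} \partial \overline{\partial} \left( n T^\prime \log |\sigma|_h^{-2} \right) = T^\prime \left( - \eta^\prime_{\overline{X}} + n \sqrt{-1} F_h \right)
\end{equation}
on $X$, the restriction of a smooth closed form $\gamma := - \eta^\prime_{\overline{X}} + n \sqrt{-1} F_h$ on $\overline{X}$. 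Since $\eta^\prime_{\overline{X}}$ represents $- 2\pi [K_{\overline{X}} + D]$, the class of $\gamma$ is $2\pi [K_{\overline{X}} + (n+1) D]$. This is exactly where the hypothesis enters: because $[K_{\overline{X}} + (n+1) D] \ge 0$, I would choose the representative $\eta^\prime_{\overline{X}}$ so that $\gamma$ is a nonnegative (semipositive) smooth form on $\overline{X}$.

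With this choice I conclude as in Corollary \ref{3.38}. Since $T_3 > 0$, there is some $f_0 \in C^\infty_{\cone}(X)$ so that the $T^\prime = 0$ version of (\ref{7.23}), call it $\omega_0$, is a K\"ahler form biLipschitz to $\omega_X(0)$. Taking $f_{T^\prime} = f_0$ for all $T^\prime$, the form (\ref{7.23}) equals $\omega_0 + T^\prime \gamma$. As $\gamma \ge 0$, this is $\ge \omega_0$ and hence K\"ahler, and it inherits the lower biLipschitz bound from $\omega_0$. For the upper bound, $\gamma$ is a smooth form on the compact manifold $\overline{X}$, so by the conical asymptotics recorded in Proposition \ref{7.3} it is bounded (in fact decaying) with respect to the complete metric $\omega_X(0)$; thus $T^\prime \gamma \le \const \omega_X(0)$ and $\omega_0 + T^\prime \gamma$ is biLipschitz to $\omega_X(0)$. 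Hence $T_3 = \infty$, and by Proposition \ref{7.21} the flow exists for all positive time.

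The main conceptual point, and the step I expect to require the most care, is the combined treatment of the two $T^\prime$-dependent terms and the identification of their sum with a representative of $2\pi [K_{\overline{X}} + (n+1) D]$. This is what forces the existence criterion to involve $(n+1) D$ rather than $D$; the extra factor $n$ traces back to the coefficient of $n \log |\sigma|_h^{-2}$ in the evolution of the superstandard data (Proposition \ref{7.17}), reflecting the contribution of the conical end to $K_X$. The remaining verifications, namely the upper biLipschitz bound and the membership $f_0 \in C^\infty_{\cone}(X)$, are routine and follow the pattern of Proposition \ref{3.28}.
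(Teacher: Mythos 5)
Your proof is correct and follows exactly the route the paper intends: the paper's own proof of this corollary is simply ``similar to Corollary \ref{3.38}, details omitted,'' and your argument supplies those details, in particular the one genuinely new point that the two $T^\prime$-dependent terms in (\ref{7.23}) must be combined into $T^\prime\bigl(-\eta^\prime_{\overline{X}} + n\sqrt{-1}F_h\bigr)$, a representative of $2\pi[K_{\overline{X}}+(n+1)D]$ that can be chosen semipositive under the hypothesis. The remaining verifications (taking $f_{T^\prime}=f_0$, the biLipschitz bounds via the decay of a smooth form on $\overline{X}$ relative to the conical metric) match the template of Corollary \ref{3.38} and Proposition \ref{3.28}.
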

\begin{proof}
The proof is similar to that of Corollary \ref{3.38}.  We omit the details.
\end{proof}

\subsection{Formal asymptotics} \label{subsection5.5}

In this subsection we discuss the asymptotics of the K\"ahler-Ricci
flow on the complement of the zero-section in the total space of a line 
bundle.  We then apply this to the quasiprojective case, where the
relevant line bundle is the normal bundle to the divisor.

Let $\pi : E \rightarrow D$ be an ample holomorphic line bundle $E$ over a complex manifold $D$.  Let $h$ be a Hermitian metric on $E$ with curvature $2$-form $F$, so that the representative $\sqrt{-1} F$ of $2\pi c_1(E)$ is a K\"ahler form $\omega_D$. 
In terms of a local holomorphic section $\sigma$ of $E$, we have $\omega_D = - \sqrt{-1} \partial \overline{\partial} \log |\sigma|^2_h$.

There is a canonical section $S$ of the bundle $\pi^* E$ over $E$ so that for $e \in E$, we have $S(e) = e \in (\pi^*E)_e \cong E_{\pi(e)}$. Let $E'$ be the complement of the zero-section of $E$. Define $\omega_0 \in \Omega^2(E')$ by $\omega_0 = \sqrt{-1} \partial\overline{\partial} |S|^{-2}_{\pi^* h}$. In terms of a local
holomorphic trivialization $E \big|_U = U \times \C$ of $E$ over a coordinate chart $U$, let $\{w^\alpha\}_{\alpha = 1}^{n-1}$ denote coordinates on $U$ and let $z$ denote the coordinate of the $\C$-factor. Then $|S|^{2}_{\pi^* h} = h(w) z \overline{z}$ and
\begin{align} \label{extra1}
\omega_0 \: = \: & \sqrt{-1} \partial\overline{\partial} \left( \frac{1}{h z \overline{z}} \right)  \\
\: = \: & \frac{\omega_D}{hz \overline{z}} + \sqrt{-1}\frac{1}{(hz \overline{z})^2} h (dz +  z h^{-1} \partial_w h) \wedge(d\overline{z} +  \overline{z} h^{-1} \overline{\partial}_w h). \notag
\end{align} 
There is a universal constant $C = C(n)$ so that 
\begin{equation}
\omega_0^n =C h^{-n} (z \overline{z})^{-(n+1)} dz \wedge d\overline{z} \wedge \omega_D^{n-1}. 
\end{equation}
Writing the metric in terms of the local coordinates $\{w^1, \ldots, w^{n-1}, z\}$ as $g_{i \overline{j}}$,
the Ricci curvature of $\omega_0$ is
\begin{equation} \label{extra2}
\Ric(\omega_0) = 
- \sqrt{-1} \partial \overline{\partial} \log\det(g_{i \overline{j}}) 
=- \sqrt{-1} \partial \overline{\partial} 
\log \left( h^{-n}\det(g^D_{\alpha \overline{\beta}}) \right) = \pi^*(\Ric(\omega_D) - n \omega_D).
\end{equation}

Put $V = z \partial_z + \overline{z} \partial_{\overline{z}}$, which is 
globally defined on $E^\prime$. One can check that 
${\mathcal L}_V \omega_0 = - 2 \omega_0$ and 
${\mathcal L}_V \Ric(\omega_0) = 0$. 
We use the notion of formal weight with respect to ${\mathcal L}_V$, which is
the same as the grading in the Taylor series expansion of a
function 
in terms of $z$ and $\overline z$.
Note that going out the conical end of $E^\prime$ corresponds to
taking $z \rightarrow 0$.  An expansion in $z$ (and $\overline{z}$) is
effectively an expansion in inverse powers of the distance from the
basepoint.

The expanding soliton equation for $\omega$, with respect to the vector field $V$, is
\begin{equation} \label{7.25}
\Ric(\omega) + \frac12 {\mathcal L}_V \omega = - \omega.
\end{equation}

\begin{proposition} \label{7.26}
Given $\omega_0$,
put
$\omega = \omega_0 - \Ric(\omega_0) + \sqrt{-1}
\partial \overline{\partial} u$.
There is a unique asymptotic expansion
\begin{equation} \label{7.27}
u \sim \sum_{k>0} u_{(k)},
\end{equation}
with ${\mathcal L}_V u_{(k)} = k u_{(k)}$, so that $\omega$
formally satisfies (\ref{7.25}). 
\end{proposition}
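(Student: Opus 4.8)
The plan is to work entirely with the grading by formal weight and to collapse the tensorial soliton equation (\ref{7.25}) to a scalar recursion. First I would record the structural facts already in hand: $\omega_0$ has weight $-2$ (that is, ${\mathcal L}_V \omega_0 = - 2 \omega_0$), $\Ric(\omega_0)$ has weight $0$, and ${\mathcal L}_V$ commutes with $\sqrt{-1} \partial \overline{\partial}$ because $V = z \partial_z + \overline{z} \partial_{\overline{z}}$ is the real part of the holomorphic field $z \partial_z$. Writing $\omega = \omega_0 - \Ric(\omega_0) + \sqrt{-1} \partial \overline{\partial} u$ with $u = \sum_{k>0} u_{(k)}$, the operator $\frac12 {\mathcal L}_V + 1$ acts on a weight-$w$ form by the scalar $\frac{w}{2} + 1$, which kills $\omega_0$ (eigenvalue $0$), fixes $-\Ric(\omega_0)$, and scales $\sqrt{-1}\partial\overline{\partial} u_{(k)}$ by $\frac{k}{2}+1$. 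Combining this with the identity $\Ric(\omega) = \Ric(\omega_0) - \sqrt{-1} \partial \overline{\partial} \Phi$, where $\Phi = \log \frac{\omega^n}{\omega_0^n}$, the entire soliton operator telescopes to
\begin{equation}
\Ric(\omega) + \tfrac12 {\mathcal L}_V \omega + \omega = - \sqrt{-1} \partial \overline{\partial} \left[ \Phi - \sum_{k>0} \left( \tfrac{k}{2} + 1 \right) u_{(k)} \right].
\end{equation}
Hence (\ref{7.25}) holds formally once the bracketed potential is annihilated weight by weight.

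The key observation is a triangular structure in the weight. Since $g_0^{-1}$ has weight $+2$, the endomorphism $B = g_0^{-1}\left( \sqrt{-1}\partial\overline{\partial} u - \Ric(\omega_0) \right)$ whose $\log\det$ is $\Phi$ has components only in weights $\ge 2$, with $B_{(2)} = - g_0^{-1} \Ric(\omega_0)$ and $B_{(m)} = g_0^{-1} \sqrt{-1}\partial\overline{\partial} u_{(m-2)}$ for $m \ge 3$. Expanding $\Phi = \tr \log(I+B) = \tr B - \frac12 \tr B^2 + \ldots$ and extracting the weight-$k$ part, the only appearance of the highest relevant potential is the linear term $\tr B_{(k)} = \Delta_{\omega_0} u_{(k-2)}$; every product $\tr(B_{(a)} B_{(b)} \cdots)$ reaching weight $k$ has each factor of weight $\ge 2$, so it involves $u_{(j)}$ only for $j \le k-4$. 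Thus $\Phi_{(k)}$ depends on $u$ only through $u_{(j)}$ with $j \le k-2$, and the weight-$k$ equation is
\begin{equation}
\left( \tfrac{k}{2} + 1 \right) u_{(k)} = \Phi_{(k)} = \Delta_{\omega_0} u_{(k-2)} + {\mathcal R}_k\left( u_{(1)}, \ldots, u_{(k-4)}; \Ric(\omega_0) \right),
\end{equation}
with ${\mathcal R}_k$ an explicit lower-order expression. Because $\frac{k}{2} + 1 = \frac{k+2}{2}$ never vanishes for $k \ge 1$, this solves \emph{uniquely} for $u_{(k)}$ in terms of strictly lower-weight data, starting from $u_{(1)} = 0$ and $u_{(2)} = - \frac12 \tr_{\omega_0} \Ric(\omega_0)$. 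Running the recursion produces the expansion and shows it is the unique one making the bracketed potential vanish identically; no elliptic inversion is needed, which is the crux.

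The part demanding the most care, and the main obstacle, is upgrading this to uniqueness of $u$ among all expansions for which $\omega$ merely satisfies the \emph{form} equation (\ref{7.25}), rather than the stronger scalar equation $\Phi = \sum_{k>0} \left( \frac{k}{2} + 1 \right) u_{(k)}$. Any such $u$ forces $\sqrt{-1}\partial\overline{\partial} G = 0$ weightwise, where $G = \Phi - \sum_{k>0} \left( \frac{k}{2}+1 \right) u_{(k)}$, so each $G_{(k)}$ is a globally defined, weight-$k$ pluriharmonic function on $E^\prime$. Solving $\partial_z \partial_{\overline{z}} G_{(k)} = 0$ kills all mixed monomials, and the remaining mixed equations force $G_{(k)} = 2 \re\left( z^k \phi(w) \right)$ with $\phi$ holomorphic; that is, $z^k \phi$ is a global holomorphic function on $E^\prime$ homogeneous of fiber-degree $k$, hence an element of $H^0(D, E^{-k})$. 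Here ampleness of $E$ enters decisively: $E^{-k}$ is negative for $k \ge 1$, so on the compact base $D$ one has $H^0(D, E^{-k}) = 0$, whence $G_{(k)} = 0$. This collapses the form equation back to the scalar recursion and pins down $u$ uniquely. I would present the weight bookkeeping and this ampleness-vanishing step as the two load-bearing ingredients, the remaining verifications (that ${\mathcal L}_V$ commutes with $\partial\overline{\partial}$, and that $\Delta_{\omega_0}$ raises weight by $2$) being routine.
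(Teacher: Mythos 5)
Your proof is correct, and its core coincides with the paper's: reduce the soliton equation to the scalar equation $\log \frac{(\omega_0 - \Ric(\omega_0) + \sqrt{-1}\partial\overline{\partial}u)^n}{\omega_0^n} = \sum_{k>0}(\frac{k}{2}+1)u_{(k)}$, rewrite the left side as $\Tr\log(I+B)$, observe that the weight-$k$ piece involves $u_{(k-2)}$ only linearly through $\Delta_{\omega_0}u_{(k-2)}$ and lower-weight potentials otherwise, and invert the nonzero constants $\frac{k}{2}+1$ to run the recursion. Where you genuinely go beyond the paper is on uniqueness. The paper passes from the form equation to the scalar equation with the phrase ``it suffices to solve,'' which is enough for existence, and then asserts uniqueness of the recursion's solution; but the proposition as stated asks for uniqueness of the expansion $\sum u_{(k)}$ subject only to the \emph{form} equation (\ref{7.25}), and a priori one could perturb $u_{(k)}$ by a weight-$k$ function $G_{(k)}$ with $\partial\overline{\partial}G_{(k)}=0$ without changing $\omega$. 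Your observation that such a $G_{(k)}$ must have the shape $2\re(z^k\phi(w))$ with $\phi$ holomorphic, hence defines an element of $H^0(D, E^{-k})$, which vanishes for $k\ge 1$ by ampleness of $E$, closes this gap cleanly; it is the one place where the ampleness hypothesis is used in an essential, non-metric way. So your argument buys a complete proof of the uniqueness claim as literally stated, at the modest cost of an extra cohomological vanishing step, while the paper's version is shorter but really only establishes uniqueness among solutions of the scalar potential equation. One cosmetic remark: your indexing $B_{(m)} = g_0^{-1}\sqrt{-1}\partial\overline{\partial}u_{(m-2)}$ should be read together with the weight-$2$ contribution $-g_0^{-1}\Ric(\omega_0)$ at $m=2$, which is what seeds $u_{(2)} = -\frac12\Tr(\omega_0^{-1}\Ric(\omega_0))$, in agreement with (\ref{7.34}).
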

\begin{proof}
We have
\begin{align} \label{7.28}
{\mathcal L}_V \omega = & - 2 \omega_0 + \sqrt{-1} \sum_{k>0} 
{\mathcal L}_V \partial \overline{\partial} u_{(k)} \\
= & - 2 \omega_0 + \sqrt{-1} \sum_{k>0} 
\partial \overline{\partial} {\mathcal L}_V u_{(k)} \notag \\
= & - 2 \omega_0 + \sqrt{-1} \sum_{k>0} 
k \partial \overline{\partial}  u_{(k)}. \notag
\end{align}
Substituting into (\ref{7.25}) gives
\begin{align} \label{7.29}
& \Ric\left(\omega_0 - \Ric(\omega_0) + 
\sqrt{-1} \partial \overline{\partial} u\right) - \omega_0 + 
\sqrt{-1}\sum_{k>0}  \frac{k}{2} \partial
\overline{\partial} u_{(k)} = \\
& - \omega_0 + \Ric(\omega_0) -
 \sqrt{-1}\sum_{k>0} \partial
\overline{\partial}  u_{(k)}, \notag  
\end{align}
or
\begin{align} \label{7.30}
& - \partial \overline{\partial} \log 
\frac{(\omega_0 - \Ric(\omega_0) + \partial
\overline{\partial} \sum_{k>0} u_{(k)})^n
}{\omega_0^n}
+  \sum_{k>0} \frac{k}{2} \partial
\overline{\partial} u_{(k)} = \\
& 
- \sum_{k>0} \partial
\overline{\partial}  u_{(k)}. \notag  
\end{align}
Hence it suffices to solve
\begin{equation} \label{7.31}
\log 
\frac{(\omega_0 - \Ric(\omega_0) + 
\sqrt{-1} \partial
\overline{\partial} \sum_{k>0} u_{(k)})^n
}{\omega_0^n} = 
\sum_{k>0} \left( \frac{k}{2}+1 \right)  u_{(k)}. 
\end{equation}
Equivalently,
\begin{equation} \label{7.32}
\Tr \log 
\left( I - \omega_0^{-1} \Ric(\omega_0) + 
\sqrt{-1} \omega_0^{-1} \partial
\overline{\partial} \sum_{k>0} u_{(k)} \right)
=
\sum_{k>0} \left( \frac{k}{2}+1 \right)  u_{(k)}. 
\end{equation}

The term $\omega_0^{-1} \Ric(\omega_0)$ has formal weight
$2$ with respect to ${\mathcal L}_V$ and
$\omega_0^{-1} \partial \overline{\partial} u_{(k)}$ has
formal weight $k+2$. We can expand the left-hand side of (\ref{7.32})
with respect to the ${\mathcal L}_V$-weighting, as
\begin{equation} \label{7.33}
- \Tr 
\left( \omega_0^{-1} \Ric(\omega_0) \right) + 
\left[ - \frac12 
\Tr \left( \left( \omega_0^{-1} \Ric(\omega_0) \right)^2 \right)
+ \Tr \left( \sqrt{-1} \omega_0^{-1} \partial \overline{\partial} u_{(2)}
\right) 
\right]  + \cdots.
\end{equation}

It is easy to see directly that $u_{(2k-1)}=0$ for $k=1, 2, \cdots$. Also
\begin{equation} \label{7.34}
u_{(2)} = - \frac12 \Tr 
\left( \omega_0^{-1} \Ric(\omega_0) \right). 
\end{equation}
For $k=2, \cdots$, the term of weight $2k$ on the left-hand side of 
(\ref{7.32}) 
can be expressed in terms of $u_{(2)}, \ldots, u_{(2k-2)}$. Equating it with the term
$\left( k+1 \right)  u_{(2k)}$ of weight $2k$ on the right-hand side determines $u_{(2k)}$ inductively in terms of $u_{(2)}, \ldots, u_{(2k-2)}$. For example,
\begin{equation} \label{7.35}
u_{(4)} = \frac13
\left[- \frac12 
\Tr \left( \left( \omega_0^{-1} \Ric(\omega_0) \right)^2 \right)
+ \Tr \left( \sqrt{-1} \omega_0^{-1} \partial \overline{\partial} u_{(2)}
\right) 
\right].
\end{equation}
That gives the existence and uniqueness. 
\end{proof}

\begin{proposition} \label{7.26.5}
The formal expanding soliton of Proposition \ref{7.26}
is a gradient expanding soliton.
\end{proposition}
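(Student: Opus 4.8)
The plan is to produce a real-valued function $f$ for which $\iota_{V^{1,0}}\omega = \sqrt{-1}\,\overline{\partial} f$, where $V^{1,0} = z\partial_z$ is the $(1,0)$-part of $V$; this is precisely the statement that $V^{1,0}$ is the complex gradient $\grad_\omega f$, i.e. the gradient soliton condition. Write $\rho = |S|_{\pi^* h}^{-2}$, so that $\omega_0 = \sqrt{-1}\,\partial\overline{\partial}\rho$. The computational backbone is the elementary identity, valid for any smooth $\psi$ and any holomorphic $(1,0)$-field $W = W^i\partial_i$,
\begin{equation}
\iota_{W}\left( \sqrt{-1}\,\partial\overline{\partial}\psi \right) = \sqrt{-1}\,\overline{\partial}(W\psi),
\end{equation}
which follows by contracting $\sqrt{-1}\,\psi_{i\overline{j}}\,dz^i\wedge d\overline{z}^j$ against $W$ and using $\partial_{\overline{j}}W^i = 0$ to pull $W^i$ through the $\partial_{\overline{j}}$. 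Applying this with $W = V^{1,0}$ to $\omega_0$ and using $V^{1,0}\rho = z\partial_z (hz\overline{z})^{-1} = -\rho$ gives $\iota_{V^{1,0}}\omega_0 = -\sqrt{-1}\,\overline{\partial}\rho$.

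Next I would observe that $\Ric(\omega_0)$ drops out entirely: by (\ref{extra2}), $\Ric(\omega_0) = \pi^*(\Ric(\omega_D) - n\omega_D)$ is pulled back from $D$, while $V^{1,0} = z\partial_z$ is vertical for $\pi$, so $\iota_{V^{1,0}}\Ric(\omega_0) = 0$. Combining this with the previous identity applied to $\sqrt{-1}\,\partial\overline{\partial} u$, and recalling $\omega = \omega_0 - \Ric(\omega_0) + \sqrt{-1}\,\partial\overline{\partial} u$, I obtain
\begin{equation}
\iota_{V^{1,0}}\omega = \sqrt{-1}\,\overline{\partial}\left( V^{1,0}u - \rho \right),
\end{equation}
so the candidate potential is $f = V^{1,0}u - \rho$.

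It remains to check that $f$ is real, i.e. that $V^{1,0}u$ is real. Since $u$ is real, this is equivalent to the invariance of $u$ under the fiberwise rotation $z\mapsto e^{\sqrt{-1}\theta}z$: if each $u_{(k)}$ is $S^1$-invariant then $(z\partial_z - \overline{z}\partial_{\overline{z}})u_{(k)} = 0$, which together with ${\mathcal L}_V u_{(k)} = k u_{(k)}$ yields $V^{1,0}u_{(k)} = \tfrac{k}{2}u_{(k)}$, a real function. The $S^1$-invariance itself follows from the uniqueness in Proposition \ref{7.26}: the rotation commutes with $V$ and hence preserves each weight space, and it fixes $\omega_0$, $\Ric(\omega_0)$ and the operator $\psi\mapsto \omega_0^{-1}\partial\overline{\partial}\psi$; thus $(e^{\sqrt{-1}\theta})^* u$ is again a formal solution of (\ref{7.25}) with the same weight decomposition, and uniqueness forces $(e^{\sqrt{-1}\theta})^* u = u$. (Alternatively, $S^1$-invariance is visible directly from the recursion for $u_{(2k)}$ in the proof of Proposition \ref{7.26}, since every ingredient there is $S^1$-invariant.) Granting this, $f$ is real; expanding $\tfrac12{\mathcal L}_V\omega = d\iota_{V}\omega = \sqrt{-1}\,\partial\overline{\partial} f$ then turns (\ref{7.25}) into $\Ric(\omega) + \sqrt{-1}\,\partial\overline{\partial} f = -\omega$, the gradient expanding soliton equation. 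The main obstacle is exactly this reality/$S^1$-invariance step; the rest is a direct contraction computation built on the two displayed identities.
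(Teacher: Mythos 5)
Your proposal is correct and follows essentially the same route as the paper: reduce to finding a real potential $F$ with $i_{V^{(1,0)}}\omega = \sqrt{-1}\,\overline{\partial}F$, use $i_{V^{(1,0)}}\Ric(\omega_0)=0$ and $i_{V^{(1,0)}}\omega_0 = -\sqrt{-1}\,\overline{\partial}|S|^{-2}_{\pi^*h}$, and establish the $U(1)$-invariance of each $u_{(k)}$ (the paper reads it off the inductive construction, you also offer the equivalent uniqueness argument) to get $V^{(1,0)}u_{(k)}=\tfrac{k}{2}u_{(k)}$ and hence $F=-|S|^{-2}_{\pi^*h}+\sum_{k>0}\tfrac{k}{2}u_{(k)}$, exactly as in the paper. (Only a cosmetic slip: $d\iota_V\omega = {\mathcal L}_V\omega$, not $\tfrac12{\mathcal L}_V\omega$; the factor $\tfrac12$ appears correctly in $\tfrac12{\mathcal L}_V\omega=\sqrt{-1}\,\partial\overline{\partial}f$.)
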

\begin{proof}
Let $V = V^{(1,0)} + V^{(0,1)}$ be the splitting of $V$ into its
$(1,0)$ and $(0,1)$ components, i.e. $V^{(1,0)} = z \partial_z$.
We need to find a function $F$ so that
\begin{equation} \label{extra3}
i_{V^{(1,0)}} \omega \: = \: \sqrt{-1} \: \overline{\partial} F.
\end{equation}

We first claim that $\omega$ is invariant under the $U(1)$-action
given by multiplying $z$ by complex numbers of norm one.
To see this, note that $\omega_0$ and $\Ric(\omega_0)$ are
$U(1)$-invariant.  Then from the inductive procedure to
construct $u_{(k)}$ in the proof of Proposition \ref{7.26}, it
follows that $u_{(k)}$ is $U(1)$-invariant.  Hence $\omega$ is
$U(1)$-invariant.

From the $U(1)$-invariance,
\begin{equation}
i_{V^{(1,0)}} \partial u_{(k)} = i_{V^{(0,1)}}
\overline{\partial} u_{(k)}.
\end{equation}
We know that
\begin{equation}
{\mathcal L}_V u_{(k)} = i_V du_{(k)} =
i_{V^{(1,0)}} \partial u_{(k)} + i_{V^{(0,1)}} 
\overline{\partial} u_{(k)} = k u_{(k)},
\end{equation}
so
\begin{equation}
i_{V^{(1,0)}} \partial u_{(k)} = \frac{k}{2} u_{(k)}.
\end{equation}
Then
\begin{equation}
i_{V^{(1,0)}} \left( \partial \overline{\partial} u_{(k)} \right) =
\overline{\partial} \left( i_{V^{(1,0)}} \partial u_{(k)} \right)
= \frac{k}{2} \overline{\partial} u_{(k)}
\end{equation}

From the definition of $\omega_0$,
\begin{equation}
i_{V^{(1,0)}} \omega_0 \: = \:  
\sqrt{-1} i_{V^{(1,0)}} \partial \overline{\partial} |S|^{-2}_{\pi^*h} \: = \:
\sqrt{-1} \: 
\overline{\partial} \left( i_{V^{(1,0)}} \partial |S|^{-2}_{\pi^*h}
\right) \: = \:
\:\sqrt{-1} \: \overline{\partial} \left( - |S|^{-2}_{\pi^*h} \right).
\end{equation}
From (\ref{extra2}), we have $i_{V^{(1,0)}} \Ric(\omega_0) = 0$.
Hence (\ref{extra3}) is satisfied with
\begin{equation}
F= - |S|^{-2}_{\pi^*h} + \sum_{k>0} \frac{k}{2} u_{(k)}.
\end{equation}
This proves the proposition.
\end{proof}

Now consider the K\"ahler-Ricci flow on $E^\prime$.
Put $\omega(t) = \omega_0 - t \Ric(\omega_0) + \sqrt{-1} \partial\overline{\partial} u(t)$. The flow equation for the potential function $u$ is
\begin{equation} \label{7.37}
\frac{\partial u}{\partial t} = \log \frac{(\omega_0 - t \Ric(\omega_0) + \sqrt{-1} \partial\overline{\partial} u)^n}{\omega_0^n} =\Tr \log \left( I - t \omega_0^{-1} \Ric(\omega_0) + \sqrt{-1}\omega_0^{-1} \partial
\overline{\partial} u \right).
\end{equation}

\begin{proposition} \label{7.38}
There is a unique asymptotic expansion 
\begin{equation} \label{7.39}
u(t) \sim \sum_{k=0}^\infty u_{(k)}(t),
\end{equation}
where ${\mathcal L}_V u_{(k)} = k u_{(k)}$, so that $u(\cdot)$ formally
satisfies (\ref{7.37}). The blowdown limit 
$u^\infty(w,z,t) = \lim_{s \rightarrow \infty} s^{-2} u(w,s^{-1}z,s^2 t)$
exists and equals the Ricci flow generated by the gradient expanding
soliton of Proposition \ref{7.26}.
\end{proposition}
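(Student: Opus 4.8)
The plan is to treat everything at the level of formal asymptotic expansions, exactly as in Proposition \ref{7.26}, and to reduce the statement to an explicit identification of the time-dependence of the coefficients. First I would write the flow equation (\ref{7.37}) as $\frac{\partial u}{\partial t} = \Tr \log(I + A)$ with $A = - t \omega_0^{-1} \Ric(\omega_0) + \sqrt{-1} \omega_0^{-1} \partial \overline{\partial} u$, substitute the ansatz $u(t) \sim \sum_k u_{(k)}(t)$ with $\mathcal{L}_V u_{(k)} = k u_{(k)}$, and decompose by $\mathcal{L}_V$-weight. Since $\mathcal{L}_V$ commutes with $\partial_t$, the term $\partial_t u_{(k)}$ has weight $k$, while $\omega_0^{-1}\Ric(\omega_0)$ has weight $2$ and $\sqrt{-1}\omega_0^{-1}\partial\overline{\partial}u_{(k)}$ has weight $k+2$. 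Comparing weights gives, at weight $0$, that $\partial_t u_{(0)} = 0$, so $u_{(0)} = 0$ by the initial condition $u(0,\cdot)=0$; the $U(1)$-invariance argument of Proposition \ref{7.26.5} forces all odd-weight terms to vanish; and at weight $2k$ one obtains $\partial_t u_{(2k)} = \left[ \Tr\log(I+A) \right]_{\text{wt } 2k}$, whose right-hand side involves only $u_{(2)}, \ldots, u_{(2k-2)}$ together with $t$ and $\Ric(\omega_0)$. Integrating in $t$ with $u_{(2k)}(0)=0$ determines each $u_{(2k)}(t)$ uniquely and inductively, giving the existence and uniqueness in (\ref{7.39}).

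The key step is to pin down the $t$-dependence by showing, by induction on $k$, that
\begin{equation}
u_{(2k)}(t) = t^{k+1} \, u_{(2k)}^{\sol},
\end{equation}
where $u_{(2k)}^{\sol}$ are the soliton coefficients of Proposition \ref{7.26}. The base cases $u_{(2)}(t) = t^2 u_{(2)}^{\sol}$ and $u_{(4)}(t) = t^3 u_{(4)}^{\sol}$ follow directly from (\ref{7.34}) and (\ref{7.35}). For the inductive step the point is the scaling identity: writing $A = \sum_{m\ge 1} A_{2m}$ with $A_{2m}$ the weight-$2m$ part, the inductive hypothesis yields $A_{2m} = t^m A^{\sol}_{2m}$, where $A^{\sol} = - \omega_0^{-1}\Ric(\omega_0) + \sqrt{-1}\omega_0^{-1}\partial\overline{\partial}\sum_j u_{(2j)}^{\sol}$ is the soliton argument. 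Since the weight-$2k$ part of $\Tr\log(I+A)$ is a sum of products $A_{2m_1}\cdots A_{2m_j}$ with $m_1 + \cdots + m_j = k$, each such product carries a factor $t^{m_1 + \cdots + m_j} = t^k$, so $\left[\Tr\log(I+A)\right]_{\text{wt }2k} = t^k \left[\Tr\log(I+A^{\sol})\right]_{\text{wt }2k} = t^k (k+1) u_{(2k)}^{\sol}$ by the soliton recursion (\ref{7.31}). Integrating $\partial_t u_{(2k)} = (k+1)\,t^k \, u_{(2k)}^{\sol}$ gives $u_{(2k)}(t) = t^{k+1} u_{(2k)}^{\sol}$, closing the induction.

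With this, the blowdown is computed termwise. Because $u_{(2k)}^{\sol}$ is homogeneous of degree $2k$ in $(z,\overline z)$, one has
\begin{equation}
s^{-2} u_{(2k)}(w, s^{-1} z, s^2 t) = s^{-2} \cdot s^{-2k} \cdot (s^2 t)^{k+1} u_{(2k)}^{\sol}(w,z) = t^{k+1} u_{(2k)}^{\sol}(w,z),
\end{equation}
which is independent of $s$, so $u^\infty(w,z,t) = \sum_{k\ge 1} t^{k+1} u_{(2k)}^{\sol}(w,z)$ exists. To identify the corresponding flow with the soliton flow, let $\psi_t(w,z) = (w, \sqrt{t}\, z)$ be the family of biholomorphisms generated by $\frac{1}{2t} V$ with $\psi_1 = \Id$. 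Using $\mathcal{L}_V \omega_0 = - 2\omega_0$, $\mathcal{L}_V \Ric(\omega_0) = 0$, and $\mathcal{L}_V u_{(2k)}^{\sol} = 2k\, u_{(2k)}^{\sol}$, the homogeneity degrees give $\psi_t^* \omega_0 = t^{-1}\omega_0$, $\psi_t^* \Ric(\omega_0) = \Ric(\omega_0)$, and $\psi_t^*(\sqrt{-1}\partial\overline{\partial} u_{(2k)}^{\sol}) = t^k \sqrt{-1}\partial\overline{\partial} u_{(2k)}^{\sol}$, whence
\begin{equation}
t\, \psi_t^* \omega_{\sol} = \omega_0 - t\,\Ric(\omega_0) + \sqrt{-1}\partial\overline{\partial} \sum_{k\ge 1} t^{k+1} u_{(2k)}^{\sol} = \omega_0 - t\,\Ric(\omega_0) + \sqrt{-1}\partial\overline{\partial} u^\infty(t).
\end{equation}
A direct computation using the soliton equation (\ref{7.25}) shows that $t\,\psi_t^*\omega_{\sol}$ is a Ricci flow with $\omega(1) = \omega_{\sol}$, so $u^\infty$ is precisely the Ricci flow generated by the gradient expanding soliton of Proposition \ref{7.26}. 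I expect the main obstacle to be purely bookkeeping: verifying the scaling identity $A_{2m} = t^m A^{\sol}_{2m}$ and checking that the spatial homogeneity degree $2k$ and the temporal degree $k+1$ conspire to cancel the factor $s$ in the blowdown exactly — everything else is formal algebra of power series, with no analytic convergence claimed beyond the level of asymptotic expansions.
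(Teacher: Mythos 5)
Your weight-by-weight decomposition, the inductive integration in $t$, and the identification of the blowdown with the soliton via the rescaling $\psi_t(w,z) = (w,\sqrt{t}\,z)$ all follow the same basic strategy as the paper, and your scaling identity $A_{2m} = t^m A^{\sol}_{2m}$ giving $u_{(2k)}(t) = t^{k+1} u^{\sol}_{(2k)}$ is a genuinely clean way to nail down the step that the paper only asserts (``one sees that $\omega_0 - \Ric(\omega_0) + \sqrt{-1}\partial\overline{\partial} u^\infty(1)$ is the formal gradient expanding soliton''). That part of your argument checks out.

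The gap is that you impose the initial condition $u(0,\cdot)=0$ at the outset, and the entire proof lives inside that special case. The paper's proof integrates each $\partial_t u_{(k)}$ with a free integration ``constant'' $c_k$ (a function of weight $k$, constant in $t$), so the general formal solution is $u_{(2k)}(t) = t^{k+1}u^{\sol}_{(2k)} + (\text{terms in } t^j,\ j \le k, \text{ built from the } c_i)$, and likewise the odd-weight terms $u_{(1)} = c_1$, $u_{(3)} = \sqrt{-1}\,t\Tr(\omega_0^{-1}\partial\overline{\partial}c_1) + c_3$, \ldots{} need not vanish --- your appeal to $U(1)$-invariance does not apply here, since the initial data of the flow need not be $U(1)$-invariant. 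This generality is not optional: in the application (the proposition following \ref{7.38}, which feeds into Theorem \ref{intro4}), one expands an actual K\"ahler-Ricci flow with conical standard asymptotics, whose time-zero potential expansion is not zero, so the $c_k$ are genuinely present. The whole point of the blowdown $s^{-2}u(w,s^{-1}z,s^2t)$ is that it kills exactly these terms: a weight-$2k$ term proportional to $t^j$ with $j \le k$ scales as $s^{2(j-k-1)} \to 0$, while only the top power $t^{k+1}$ survives. In your setting the blowdown is trivially $s$-independent and does no work, so you have verified the conclusion only for the one solution where it is automatic. The repair is short --- carry the $c_k$ through the induction, observe that they only contribute powers $t^j$ with $j \le k$ at weight $2k$, and add the one-line scaling computation above --- but as written the proof does not establish the statement in the case it is actually needed.
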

\begin{proof}
Substituting (\ref{7.39}) into (\ref{7.37}) 
and equating the terms of various weights gives
\begin{align} 
\frac{\partial u_{(0)}}{\partial t} = & 0, \\
\frac{\partial u_{(1)}}{\partial t} = & 0, \notag \\
\frac{\partial u_{(2)}}{\partial t} = & - t \Tr \left(
\omega_0^{-1} \Ric(\omega_0) \right) + \sqrt{-1}
\Tr \left( \omega_0^{-1} \partial
\overline{\partial} u_{(0)} \right), \notag \\
\frac{\partial u_{(3)}}{\partial t} = & 
\sqrt{-1}
\Tr \left( \omega_0^{-1} \partial
\overline{\partial} u_{(1)} \right), \notag \\
\frac{\partial u_{(4)}}{\partial t} = & 
- \frac12 t^2 \Tr \left( \left(
\omega_0^{-1} \Ric(\omega_0) \right)^2  \right)
+ \sqrt{-1} t \Tr \left[ \left( \omega_0^{-1} \Ric(\omega_0) \right)
\left( \omega_0^{-1} \partial
\overline{\partial} u_{(0)} \right) \right]
+ \notag \\
& \frac12 \Tr \left(
\omega_0^{-1} \partial
\overline{\partial} u_{(0)} \right)^2 + \sqrt{-1}
\Tr \left( \omega_0^{-1} \partial
\overline{\partial} u_{(2)} \right), \notag  \\
\vdots \notag
\end{align}
The solution is
\begin{align} 
u_{(0)} = & c_0, \\
u_{(1)} = & c_1, \notag \\
u_{(2)} = & - \frac12 t^2 \Tr \left(
\omega_0^{-1} \Ric(\omega_0) \right) + \sqrt{-1} t
\Tr \left( \omega_0^{-1} \partial
\overline{\partial} c_0 \right) + c_2, \notag \\
u_{(3)} = & 
\sqrt{-1} t
\Tr \left( \omega_0^{-1} \partial
\overline{\partial} c_1 \right) + c_3, \notag \\
u_{(4)} = & 
- \frac16 t^3 \left[ \Tr \left( \left(
\omega_0^{-1} \Ric(\omega_0) \right)^2 \right) + \sqrt{-1}
\Tr \left( \omega_0^{-1} \partial
\overline{\partial} \Tr (\omega_0^{-1} \Ric(\omega_0)) \right)
\right] + \ldots + c_4, \notag  \\
\vdots \notag
\end{align}
where ${\mathcal L}_V c_k = k c_k$.

If $u^\infty(w,z,t) = \lim_{s \rightarrow \infty} s^{-2}u \left( w,s^{-1}z,s^2t \right)$, then $u^\infty$ has an asymptotic expansion
\begin{equation} \label{7.40}
u^\infty(t) \sim \sum_{k=1}^n u^\infty_{(k)}(t),
\end{equation}
with
\begin{align} \label{7.41}
u^\infty_{(0)} = & 0, \\
u^\infty_{(1)} = & 0, \notag \\
u^\infty_{(2)} = & - \frac12 t^2 \Tr \left(
\omega_0^{-1} \Ric(\omega_0) \right), \notag \\
u^\infty_{(3)} = &  0, \notag \\
u^\infty_{(4)} = & 
- \frac16 t^3 \left[ \Tr \left( \left(
\omega_0^{-1} \Ric(\omega_0) \right)^2 \right) + \sqrt{-1}
\Tr \left( \omega_0^{-1} \partial
\overline{\partial} \Tr (\omega_0^{-1} \Ric(\omega_0)) \right)
\right], \notag  \\
\vdots \notag
\end{align}
Note that the construction of $u^\infty(w, z, t)$ amounts to
keeping only the terms in $u$ with the highest power of $t$, 
i.e. $t^{\frac{k}{2}+1}$ for $u_{(k)}$. That is, we remove 
the terms involving the $c_i$'s. Then one sees that 
$\omega_0 - \Ric(\omega_0) + \sqrt{-1} \partial \overline{\partial} 
u^\infty(1)$ is the formal gradient expanding soliton 
of Proposition \ref{7.26}. Hence
$\omega_0 - \Ric(\omega_0) + \sqrt{-1} \partial \overline{\partial} 
u^\infty(t)$ is the time-$t$ solution for the flow of the
expanding soliton.
\end{proof}

\begin{proposition}
In the setting of Subsection \ref{subsection5.3},
suppose that $(X, \omega_X(\cdot))$ is a K\"ahler-Ricci flow with
conical standard spatial asymptotics, that exists on the
time interval $[0, \infty)$. 
Suppose that there is an asymptotic expansion
\begin{equation}
\omega_X(t) \sim \omega_0 - t \Ric(\omega_0) + \sqrt{-1} \sum_{k=0}^\infty
\partial \overline{\partial} u_{(k)}(t),
\end{equation}
where ${\mathcal L}_V u_{(k)} = k u_{(k)}$, with $V = z \partial_z +
\overline{z} \partial_{\overline{z}}$.
Let $(CY - \star, \omega_{X_\infty}(\cdot))$
be a parabolic blowdown limit of $\omega_X(\cdot)$. Then the
asymptotic expansion of $\omega_{X_\infty}(\cdot)$ is the
formal gradient expanding soliton of Proposition \ref{7.26}.
\end{proposition}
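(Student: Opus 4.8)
The plan is to reduce the statement to the purely formal computation of Proposition \ref{7.38}, by showing that the parabolic blowdown acts on the assumed asymptotic expansion of $\omega_X(\cdot)$ in exactly the same way that the operation $u^\infty(w,z,t) = \lim_{s \rightarrow \infty} s^{-2} u(w, s^{-1} z, s^2 t)$ acts on the formal solution of the flow equation there.

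First I would observe that, since $\omega_X(\cdot)$ solves the K\"ahler-Ricci flow and admits the stated weighted expansion, the potential $u(t) \sim \sum_{k \ge 0} u_{(k)}(t)$ formally satisfies the reduced flow equation (\ref{7.37}). Indeed, writing $\omega_X(t) = \omega_0 - t \Ric(\omega_0) + \sqrt{-1} \partial \overline{\partial} u(t)$ and matching the asymptotic expansions of the two sides of $\frac{\partial u}{\partial t} = \log \frac{\omega_X(t)^n}{\omega_0^n}$ produces exactly the weight-by-weight recursion of Proposition \ref{7.38}. By the uniqueness asserted there, the $u_{(k)}(t)$ coincide with the polynomials in $t$ produced in that proof, for some integration constants $c_k$ fixed by the initial expansion of $\omega_X(0)$. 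In particular each $u_{(k)}(t)$ has top $t$-degree $\frac{k}{2}+1$, whose coefficient is precisely the soliton term, while all lower-order-in-$t$ contributions carry the constants $c_j$.

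Next I would implement the blowdown. By Proposition \ref{6.7} a blowdown limit arises as $\lim_{i \rightarrow \infty} \frac{1}{r_i^2} \phi_i^* \omega_X(r_i^2 t)$ for embeddings $\phi_i$ of annuli and $r_i \rightarrow \infty$. Using the conical coordinates of Proposition \ref{7.3}, in which the radial distance satisfies $R \sim |z|^{-1}$, the $\phi_i$ may be taken to coincide in the $z$-coordinate with the dilation $z \mapsto r_i^{-1} z$ generated by $V$, up to corrections tending to the identity. I would then push the rescaling through the expansion term by term, using the homogeneities ${\mathcal L}_V \omega_0 = - 2 \omega_0$, ${\mathcal L}_V \Ric(\omega_0) = 0$ and ${\mathcal L}_V u_{(k)} = k u_{(k)}$. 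The first gives $\frac{1}{r_i^2} \phi_i^* \omega_0 = \omega_0$; the second gives $\frac{1}{r_i^2} \phi_i^* \bigl( r_i^2 t \, \Ric(\omega_0) \bigr) = t \, \Ric(\omega_0)$; and for the $k$-th potential term the dilation multiplies $u_{(k)}$ by $r_i^{-k}$, so $\frac{1}{r_i^2} r_i^{-k} u_{(k)}(r_i^2 t)$ retains in the limit only the top $t$-power, namely the soliton coefficient times $t^{\frac{k}{2}+1}$, every lower-order-in-$t$ term (hence every term containing a $c_j$) being killed by the negative power of $r_i$. This is precisely the passage from $u(t)$ to $u^\infty(t)$ in Proposition \ref{7.38}, so the expansion of $\omega_{X_\infty}(\cdot)$ is $\omega_0 - t \Ric(\omega_0) + \sqrt{-1} \partial \overline{\partial} u^\infty(t)$, which by that proposition is the Ricci flow of the gradient expanding soliton of Proposition \ref{7.26}.

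The main obstacle I anticipate is justifying the interchange of the blowdown limit with the asymptotic expansion, i.e. that the expansion of the limit $\omega_{X_\infty}(t)$ is genuinely the term-by-term limit of the rescaled expansions computed above, and not merely formally equal to it. This requires combining the smooth convergence on annuli of Proposition \ref{6.7} with uniform control of the remainders in the expansion of $\omega_X(\cdot)$ under rescaling: the homogeneity weights guarantee that a remainder of order higher than any fixed weight remains a remainder after applying $\frac{1}{r_i^2}\phi_i^*$, so the surviving soliton coefficients honestly govern the limiting expansion. A secondary point is checking that the $\phi_i$ can be normalized to the pure dilation in the $z$-coordinate; this follows from the explicit conical structure of Proposition \ref{7.3}, since the deviation is of lower order and disappears in the limit.
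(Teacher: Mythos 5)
Your proposal is correct and takes essentially the same route as the paper: the paper's entire proof is to set $E = L_D\big|_D$ (the normal bundle of $D$) and invoke Proposition \ref{7.38}, which is precisely your reduction — the assumed weighted expansion must coincide with the unique formal solution of (\ref{7.37}), and the parabolic rescaling reproduces the operation $u \mapsto u^\infty$ that discards the integration constants $c_k$ and leaves the soliton. The interchange-of-limits issue you flag is real but is sidestepped by the paper because the conclusion is itself only a statement about formal asymptotic expansions in powers of $z$ and $\overline{z}$.
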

\begin{proof}
Let
$E$ be the restriction of $L_D$ to $D$, i.e. the normal bundle of $D$ in 
$\overline X$. Then the proposition follows from Proposition \ref{7.38}.
\end{proof}

We have now proved Theorem \ref{intro4}.

\begin{example}
With reference to Example \ref{6.10}, consider the case when $p =1$.
The asymptotic cone is flat.  The corresponding formal expanding
soliton is also flat. This is consistent with the explicit solution
of Example \ref{6.10}, which
approaches the asymptotic cone exponentially fast.  Note that in this
case, the parabolic blowdown limit is also an expanding soliton, which
differs from the formal expanding soliton of its asymptotic expansion.

If we took $p \neq 1$ in Example \ref{6.10} 
(see Remark \ref{7.13}) then the asymptotic cone, and also the
formal expanding soliton, would be nonflat.
\end{example}

\appendix

\section{Local curvature estimates in K\"ahler-Ricci flow} \label{localapp}

In this section we prove a curvature estimate for the K\"ahler-Ricci flow.
The assumptions are that the curvature and its first covariant 
derivative are bounded on an initial ball, and that on the
given time interval,
the metric on the ball is uniformly biLipschitz to the initial metric.

\begin{proposition} \label{A.1}
Let $(X, p, \omega(\cdot))$ be a pointed K\"ahler-Ricci flow on a complex 
$n$-dimensional manifold $X$, defined on a time interval $[0, T]$, with 
possibly incomplete time slices.  Given $C_1 < \infty$, there is some
$C_2 = C_2(C_1,n) < \infty$ with the following property.  If $r > 0$, 
suppose that the time-zero ball $B_0(p, r)$ has compact closure in $X$, 
and at time zero we have
\begin{enumerate} [\upshape (i)]
\item $|\Rm| \le C_1 r^{-2}$ on $B_0(p, r)$, and
\item $|\nabla \Rm| \le C_1 r^{-3}$ on $B_0(p, r)$.
\end{enumerate}
Furthermore, assume that for all $t \in [0,T]$,
\begin{equation} \label{A.2}
C_1^{-1} \omega(0) \le \omega(t) \le C_1 \omega(0). 
\end{equation}
Then $|\Rm(x,t)| \le C_2 r^{-2}$ for all $x \in B_0 \left(
p, \frac14 r \right)$ and $t \in [0, T]$.
\end{proposition}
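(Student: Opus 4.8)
The plan is to follow the Sherman–Weinkove strategy \cite{sherman-weinkove} for deriving an interior curvature bound from a biLipschitz bound in K\"ahler–Ricci flow, with the modifications forced by the fact that we assume control on $|\Rm|$ and $|\nabla \Rm|$ only at the initial time. The key point is that in the K\"ahler setting, the biLipschitz assumption (\ref{A.2}) is not merely a metric comparison but controls the full complex Hessian of the relative potential, and this upgrades a zeroth-order comparison into curvature control without any \emph{a priori} bound on the elapsed time $T$.

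\emph{First}, I would set up the local potential. After rescaling so that $r=1$, write $\omega(t) = \omega(0) + \sqrt{-1}\,\partial\overline{\partial}\varphi$ on the ball $B_0(p,1)$, where $\varphi(\cdot,t)$ solves the associated Monge–Amp\`ere flow (\ref{2.3}) relative to a fixed background. The biLipschitz hypothesis (\ref{A.2}) translates exactly into a two-sided bound on $\operatorname{tr}_{\omega(0)}\omega(t)$ and on $\operatorname{tr}_{\omega(t)}\omega(0)$, i.e.\ uniform bounds on the eigenvalues of $\omega(t)$ relative to $\omega(0)$. The initial curvature bounds (1) and (2) give $C^2$ and $C^3$ control of the background metric on $B_0(p,1)$, so that all Christoffel symbols and their first derivatives are controlled there.

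\emph{Second}, I would run a maximal-principle argument on a localized quantity of the schematic form
\begin{equation}
Q = \eta\,\bigl(\operatorname{tr}_{\omega(0)}\omega(t)\bigr)\,|\Rm(\omega(t))|^2_{\omega(t)},
\end{equation}
or more precisely a Sherman–Weinkove-type combination in which a cutoff $\eta$ supported in $B_0(p,1)$ and vanishing near the boundary is coupled to the curvature norm and to auxiliary terms ($\operatorname{tr}_{\omega(t)}\omega(0)$ and $|\nabla\varphi|^2$) that are needed to dominate the bad gradient terms produced by $\Box = \partial_t - \Delta_{\omega(t)}$. Applying $\Box$ to each factor using the evolution equation of the curvature tensor and the biLipschitz bounds, one obtains a differential inequality $\Box Q \le C - c\,|\nabla \Rm|^2 + (\text{controlled})$; at an interior spacetime maximum of $Q$ the gradient terms vanish and the reaction terms, which are quadratic in $\Rm$, are absorbed by the good negative term, yielding a bound on $Q$ and hence on $|\Rm|$ on $B_0(p,\tfrac14)$ that is independent of $T$.

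\emph{The main obstacle} will be obtaining the correct sign structure in $\Box Q$ so that the elapsed time $T$ never enters the final estimate. This is precisely where the K\"ahler structure is essential: in the Riemannian case a bound of this type would pick up a factor growing in $T$, but here the conjugate-heat/Monge–Amp\`ere formalism produces a self-improving inequality because the biLipschitz bound simultaneously controls both $\operatorname{tr}_{\omega(0)}\omega(t)$ and its reciprocal, pinning the metric eigenvalues from both sides for all time. Carefully choosing the weights multiplying the auxiliary quantities so that their bad cross-terms are dominated — and verifying that the initial $|\nabla\Rm|$ bound (2) is exactly what is needed to start the curvature evolution without a boundary layer at $t=0$ — is the delicate part; the rest is the bookkeeping of Shi-type commutator terms, which I would not write out in detail here.
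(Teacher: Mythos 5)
Your overall plan (a localized maximum-principle argument in the style of Sherman--Weinkove, using the biLipschitz bound (\ref{A.2}) and the time-zero hypotheses (1) and (2)) is the right family of argument, but as written it has a genuine gap: you skip the intermediate third-order (Calabi-type) estimate, and the mechanism you describe for closing the curvature estimate would not work. The paper's proof is in two stages. It first bounds $S=|\Psi|^2$, where $\Psi_{ij}^k=g^{l\bar k}\widetilde\nabla_i g_{j\bar l}$ is the difference of the evolving and initial connections --- equivalently a \emph{third} derivative of the relative potential, not the $|\nabla\varphi|^2$ you list among your auxiliary quantities. The evolution of $S$ carries the dissipation term $-|\overline\nabla\Psi|^2-|\nabla\Psi|^2$, and the crucial identity $\nabla_{\bar b}\Psi^k_{ip}=\widetilde R_{i\bar b p}{}^{\;k}-R_{i\bar b p}{}^{\;k}$ converts $-|\overline\nabla\Psi|^2$ into $-\tfrac12|\Rm|^2+C$; hypothesis (2) enters precisely to control the term $\widetilde\nabla\widetilde\Rm\ast\Psi$ in the evolution of $S$, not to avoid ``a boundary layer at $t=0$''. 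Coupling the cutoff quantity $\phi\cdot S$ to $A\,\Tr(\omega_0^{-1}\omega)$, whose evolution supplies a good term $-C\cdot S$, gives $\left(\frac{\p}{\p t}-\Delta\right)\left(\phi S+A\Tr(\omega_0^{-1}\omega)\right)\le -S+C$ and hence a $T$-independent bound $S\le C$ on $B_0\left(p,\tfrac12\right)$.

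Second, your claim that ``the reaction terms, which are quadratic in $\Rm$, are absorbed by the good negative term'' $-c|\nabla\Rm|^2$ is where the argument fails: the reaction term in the evolution of $|\Rm|^2$ is cubic, of size $C|\Rm|^3$, and cannot be absorbed by a gradient term. The actual absorption in the paper comes from considering $H=|\Rm|^2/(C-S)^{1/2}$: since $\left(\frac{\p}{\p t}-\Delta\right)S\le-\tfrac12|\Rm|^2+C$ once $S$ is bounded, differentiating the quotient produces a term of order $-|\Rm|^4$, i.e.\ $-CH^2$, which dominates the cubic reaction; then $\left(\frac{\p}{\p t}-\Delta\right)(\widehat\phi\, H+B S)\le-|\Rm|^2+C$ yields the $T$-independent bound by the maximum principle. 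So the $T$-independence is not a consequence of the biLipschitz bound controlling a trace and its reciprocal per se; it comes from the sign structure of these two final differential inequalities, which is unavailable without the Calabi quantity $S$ and the identity relating $\overline\nabla\Psi$ to the curvature difference. Your proposed quantity $\eta\,(\operatorname{tr}_{\omega(0)}\omega(t))\,|\Rm|^2$ cannot substitute for this, because the dissipation of the zeroth-order trace only produces $-S$, not $-|\Rm|^2$.
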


\begin{remark}
In \cite{sherman-weinkove}, 
Sherman and Weinkove prove a related result in which
$C_2$ depends, in an unspecified way, on $\omega(0)$. Since we will
apply Proposition \ref{A.1} to a compactness theorem, we need a
uniformity result for $C_2$.
\end{remark}

\begin{remark}
The point of Proposition \ref{A.1} is that $C_2$ does not depend on $T$.
We do not know whether the analog of Proposition \ref{A.1} holds for 
non-K\"ahler Ricci flows.
\end{remark}

\begin{proof}
After rescaling, we can assume that $r=1$.

In the following, we use $\widetilde\cdot$ for notations including $g$, 
$|\cdot|$, $\nabla$, $\Rm$ and $\Delta$ to indicate if they are 
with respect 
to the initial metric $\omega(0)$. Otherwise, they are with respect to 
the evolving flow metric $\omega(\cdot)$. Indices are in terms of any 
local holomorphic coordinates on $X$. We let $\langle \cdot, \cdot \rangle$ 
denote a Euclidean inner product and $(\cdot, \cdot)$ denote a Hermitian 
inner product. We let $\nabla$ denote the $(1,0)$ component of the 
covariant derivative, and $\overline{\nabla}$ denote a $(0,1)$ component 
of the covariant derivative. For example, if $f_1$ and $f_2$ are smooth 
real functions then $(\nabla f_1, \nabla f_2) = g^{\bar j i}
\partial_i f_1 \partial_{\bar j} f_2$,
$\langle df_1, df_2 \rangle = \re (\nabla f_1, \nabla f_2)$ and
$\langle df_1, df_1 \rangle = |\nabla f_1|^2 = 
| \overline{\nabla} f_1 |^2$.

The letter $C$ will denote a positive constant that can depend on
$C_1$ and $n$, but does not depend in any other way on $\omega(0)$.
The value of $C$ is allowed to change from place to place.

Define the tensor $\Psi$ by 
\begin{equation} \label{A.5}
\Psi_{ij}^k = (\nabla - \widetilde\nabla)_{ij}^k =
g^{\bar l k}\widetilde\nabla_ig_{j\bar l}. 
\end{equation} 
In view of (\ref{A.2}), a bound on $\Psi$ is equivalent to a first derivative
bound on $\omega(t)$, relative to $\omega(0)$.
Put $S = |\Psi|^2$.
From \cite[Proposition 2.8]{song-weinkove},
\begin{equation}
\(\frac{\p}{\p t}-\Delta\)S=-|\overline\nabla\Psi|^2-|\nabla\Psi|^2
- 2 \re \left( g^{\bar j i} g^{\bar q p} g_{k \bar l} 
\nabla^{\bar b} \widetilde{R}_{i \bar b p}^{\: \: \: \: \: k}
\overline{\Psi^l_{jq}} \right).
\end{equation}
In view of (\ref{A.2}) and (\ref{A.5}),
\begin{equation}
- 2 \re \left( g^{\bar j i} g^{\bar q p} g_{k \bar l} 
\nabla^{\bar b} \widetilde{R}_{i \bar b p}^{\: \: \: \: \: k}
\overline{\Psi^l_{jq}} \right)
\le C|\widetilde\nabla\widetilde\Rm|\cdot|\Psi|+C|\widetilde\Rm|\cdot|\Psi|^2.
\end{equation}
Then using assumptions (i) and (ii) of the proposition,
\begin{equation}
\(\frac{\p}{\p t}-\Delta\)S\le -|\overline\nabla\Psi|^2-|\nabla\Psi|^2+C\cdot S+C.
\end{equation}
From \cite[(2.43)]{song-weinkove},
\begin{equation}
\nabla_{\bar b} \Psi^k_{ip} = \widetilde{R}_{i \bar b p}^{\: \: \: \: \: k} -
{R}_{i \bar b p}^{\: \: \: \: \: k}.
\end{equation}
Hence
\begin{equation} \label{A.10}
\(\frac{\p}{\p t}-\Delta\)S \le
-|\overline\nabla\Psi|^2+C\cdot S+C \le
 - \frac12 |\Rm|^2+C\cdot S+C.
\end{equation}

We will need a cutoff function.
Let $\Phi : [0, \infty) \rightarrow [0, \infty)$ be a smooth nonincreasing
function so that $\Phi \Big|_{\left[0,\frac12 \right]} = 1$ and 
$\Phi \Big|_{[1,\infty]} = 0$.
We can and will assume that where $\Phi \neq 0$,
\begin{equation} \label{A.11}
\frac{(\Phi^\prime)^2}{\Phi} = 4 
\left( \left( \Phi^{\frac12} \right)^\prime \right)^2 \le C.
\end{equation}
Put
\begin{equation}
\phi(x)=\Phi(d_0(x, p)),
\end{equation}
a time-independent Lipschitz function.
Let $d_p$ denote the time-zero distance function from $p$, i.e.
$d_p(x) = d_0(x,p)$
At time zero, 
\begin{equation} 
|\widetilde{\nabla} \phi|^2 = \widetilde{g}^{\bar j i} 
\partial_i \phi \partial_{\bar j} \phi = (\Phi^\prime)^2 \circ d_p.
\end{equation}
At time $t$,
$|\nabla \phi|^2 = g^{\bar j i} \partial_i \phi \partial_{\bar j} \phi$.
Then by (\ref{A.2}), $|\nabla \phi| \le C$ on $B_0(p,1) \times [0,T]$.

Next, the $(1,1)$-component of $\Hess(\phi)$ is given by
\begin{equation} \label{A.13}
\sqrt{-1} \partial \overline{\partial} \phi =
\sqrt{-1} \left( \Phi^{\prime \prime}\circ d_p \right) 
\partial d_p \wedge
\overline{\partial} d_p + \sqrt{-1} \left( \Phi^\prime \circ d_p \right)
\partial \overline{\partial} d_p.
\end{equation}
Assumption (i) of the proposition 
and Hessian comparison imply there is an estimate
\begin{equation}
\sqrt{-1} \partial \overline{\partial} d_p \le  C \omega(0)
\end{equation}
in the barrier sense.
As $\Phi^\prime \le 0$, using (\ref{A.2}) and (\ref{A.13}) we obtain
\begin{equation}
\triangle \phi = g^{\bar j i} \partial_i \partial_{\bar j} \phi \ge 
- C.
\end{equation}

Then on $B_0(p, 1)\times [0, T]$, we have
\begin{align}
&\(\frac{\p}{\p t}-\Delta\)(\phi\cdot S) \\
&= \phi\(\frac{\p}{\p t}-\Delta\)S-S\Delta\phi-2 \re (\nabla\phi, \nabla S) \notag \\
&\le \phi\(-|\overline\nabla\Psi|^2-|\nabla\Psi|^2+C\cdot S+C\)+C \cdot S+
2|\nabla\phi|\cdot|\nabla S|  \notag \\
&\le\phi\(-|\overline\nabla\Psi|^2-|\nabla\Psi|^2+C\cdot S+C\)+C \cdot S+
4|\nabla\phi|\cdot(|\nabla \Psi|+|\overline\nabla\Psi|)\cdot|\Psi| \notag
\end{align}

Let $\epsilon > 0$ be small enough that $\epsilon|\nabla \phi|^2-\phi\le 0$, 
which is possible from (\ref{A.11}). Since
\begin{align}
& \phi\(-|\overline\nabla\Psi|^2-|\nabla\Psi|^2+C\cdot S+C\)+
C \cdot S+4|\nabla\phi|\cdot(|\nabla \Psi|+|\overline\nabla\Psi|)\cdot|\Psi| \\
&\le \phi\(-|\overline\nabla\Psi|^2-|\nabla\Psi|^2+C\cdot S+C\)+
C \cdot S+\epsilon |\nabla\phi|^2\cdot(|\nabla \Psi|^2+|\overline\nabla\Psi|^2)+C(\epsilon)\cdot S, \notag
\end{align}
we conclude that
\begin{equation}
\(\frac{\p}{\p t}-\Delta\)(\phi\cdot S)\le C\cdot S+C.
\end{equation}

From assumption (i) of the proposition, (\ref{A.2}) and 
\cite[(2.26) and (2.27)]{song-weinkove}, we have
\begin{equation}
\(\frac{\p}{\p t}-\Delta\) \Tr \left( \omega_0^{-1} \omega \right)
\le C-C\cdot S.
\end{equation}
Choosing $A$ large enough, we have the following differential inequality on
$B_0(p, 1) \times [0, T]$:
\begin{equation} \label{A.21}
 \( \frac{\p}{\p t}-\Delta\) \left(\phi\cdot S+A \Tr \left( \omega_0^{-1} \omega
\right) \right) \le -S+C.
\end{equation}
 
On $\partial B_0(p, 1) \times [0, T]$, the function $\phi$ vanishes. On
$B_0(p,1) \times \{0\}$, the function $S$ vanishes. 
By (\ref{A.2}), 
\begin{equation}
A \: |\Tr \left( \omega_0^{-1} \omega \right)| \le C^\prime
\end{equation} for some
constant $C^\prime < \infty$. In particular,
\begin{equation}
\phi\cdot S+A \Tr \left( \omega_0^{-1} \omega
\right) \le C^\prime
\end{equation} on the parabolic boundary 
$(\partial B_0(p, 1) \times [0, T]) \cup (B_0(p,1) \times \{0\})$.

With reference to the constant $C$ of the right-hand side of
(\ref{A.21}), suppose that
\begin{equation} 
\phi(x,t) \cdot S(x,t) +A \Tr \left( \omega_0^{-1} \omega
\right)(x,t) \ge C + C^\prime
\end{equation} for some $(x,t)$. Then $\phi(x,t) \cdot S(x,t) \ge C$, so 
$S(x,t) \ge C$, so $-S(x,t) + C \le 0$. 
The reasoning in the proof of the parabolic maximum principle,
applied to (\ref{A.21}), now implies that 
\begin{equation}
\phi\cdot S+A \Tr \left( \omega_0^{-1} \omega \right)
 \le C + C^\prime
\end{equation}
on $B_0(p,1) \times [0, T]$. Reverting to the use of $C$ to denote a generic
positive constant, since
$\phi$ is one on $B_0\left( p, \frac12 \right) \times [0,T]$, 
we conclude that 
\begin{equation} \label{A.23}
S\le C
\end{equation} 
on $B_0\left( p, \frac12 \right) \times [0,T]$,
independent of $T$.

Hereafter we work on $B_0\left( p, \frac12 \right) \times [0,T]$.
From (\ref{A.10}) and (\ref{A.23}),
\begin{equation}
\(\frac{\p}{\p t}-\Delta\)S\le -\frac{1}{2}|\Rm|^2+C. 
\end{equation}
From \cite[(2.58)]{song-weinkove},
\begin{equation}
\(\frac{\p}{\p t}-\Delta\)|\Rm|^2\le -|\nabla\Rm|^2-|\overline\nabla\Rm|^2+C|\Rm|^3. 
\end{equation}

Put $H=\frac{|\Rm|^2}{(C-S)^\frac12}$
where $C$ is large enough so that $C-S\ge1$ in 
$B_0 \left( p, \frac12 \right)\times [0, T]$. Since
$1 \le C-S \le C$, a bound on $H$ is equivalent to a bound on $|\Rm|^2$.
Then
\begin{align} \label{A.30}
&\(\frac{\p}{\p t}-\Delta\)H \\
&= \frac{1}{(C-S)^\frac12}\(\frac{\p}{\p t}-\Delta\)
|\Rm|^2+|\Rm|^2\(\frac{\p}{\p t}-\Delta\)\frac{1}{(C-S)^\frac12} - 
2 \re \left( \nabla |\Rm|^2, \nabla \frac{1}{(C-S)^\frac12} \right)
\notag \\
&= \frac{1}{(C-S)^\frac12}\(\frac{\p}{\p t}-\Delta\)|\Rm|^2+
\frac12 \frac{|\Rm|^2}{(C-S)^\frac32}\(\frac{\p}{\p t}-\Delta\)S - \notag \\
&\,\,\,\,\,\,\,\,\,\, \frac38 \frac{|\Rm|^2(|\nabla S|^2+|\overline\nabla S|^2)}{(C-S)^\frac52}
- \frac{\re \left( \nabla |\Rm|^2, \nabla S \right)}{(C-S)^\frac32} 
 \notag \\
&\le \frac{1}{(C-S)^\frac12}\(-|\nabla\Rm|^2-|\overline\nabla\Rm|^2+C|\Rm|^3\)+ 
\frac12\frac{|\Rm|^2}{(C-S)^\frac32}\(-\frac{1}{2}|\Rm|^2+C\) - \notag \\
&\,\,\,\,\,\,\,\,\,\, \frac38 \frac{|\Rm|^2(|\nabla S|^2+|\overline\nabla S|^2)}{(C-S)^\frac52}
+ \frac{\sqrt{|\nabla \Rm|^2 + |\overline{\nabla} \Rm|^2}
\cdot |\Rm| \cdot  
\sqrt{|\nabla S|^2 + |\overline{\nabla} S|^2}}{(C-S)^\frac32} \notag \\
&= - \frac{11}{36} \:  \frac{|\nabla\Rm|^2+|\overline\nabla\Rm|^2}{(C-S)^\frac12}
- \frac{3}{200} 
\frac{|\Rm|^2(|\nabla S|^2+|\overline\nabla S|^2)}{(C-S)^\frac52} \: - \notag \\
&\,\,\,\,\,\,\,\,\,\, \left( \frac56
\frac{\sqrt{|\nabla \Rm|^2 + |\overline{\nabla} \Rm|^2}}{(C-S)^\frac14}
- \frac35 \frac{|\Rm| \cdot  
\sqrt{|\nabla S|^2 + |\overline{\nabla} S|^2}}{(C-S)^\frac54} \right)^2 
+ \notag \\
&\,\,\,\,\,\,\,\,\,\,\,\,\,\,\, \frac12
\frac{|\Rm|^2}{(C-S)^\frac32}\(-\frac{1}{2}|\Rm|^2 + 2 |\Rm| (C-S) +C\)
 \notag \notag \\
&\le - \frac{1}{100} \:  \frac{|\nabla\Rm|^2+|\overline\nabla\Rm|^2}{(C-S)^\frac12}
- \frac{1}{100} 
\frac{|\Rm|^2(|\nabla S|^2+|\overline\nabla S|^2)}{(C-S)^\frac52} \: 
- CH^2 + C. \notag
\end{align}

Define $\widehat{\phi} \in C(X)$ by
\begin{equation}
\widehat{\phi}(x) = \Phi(2 d_0(x,p)),
\end{equation}
so $\supp(\widehat{\phi}) \subset \overline{B_0 \left( p, \frac12 \right)}$. 
Then
\begin{align} \label{A.32}
 \(\frac{\p}{\p t}-\Delta\)(\widehat{\phi}\cdot H)
& =  \widehat{\phi}\(\frac{\p}{\p t}-\Delta\)H-H\Delta\widehat{\phi}-2\re(\nabla\widehat{\phi}, \nabla H) \\
&\le \widehat{\phi}\(\frac{\p}{\p t}-\Delta\)H+C \cdot H+
2 |\nabla \widehat{\phi}| \cdot|\nabla H|. \notag 
\end{align}
Now
\begin{equation}
\nabla H = \frac{\nabla |\Rm|^2}{(C-S)^\frac12} + \frac12
\frac{|\Rm|^2 \nabla S}{(C-S)^\frac32},
\end{equation}
so
\begin{align} \label{A.34}
& 2|\nabla \widehat\phi|\cdot|\nabla H| \\
&\le 2|\nabla \widehat\phi|\cdot\left(\sqrt{2} \frac{|\Rm| \sqrt{|\nabla \Rm|^2 + |\overline{\nabla} \Rm|^2}}{(C-S)^\frac12} 
+ \frac{1}{2\sqrt{2}}\frac{|\Rm|^2 \sqrt{|\nabla S|^2 + |\overline{\nabla} S|^2}}{(C-S)^\frac32}\right) \notag \\
&\le  \frac{\epsilon}{100} |\nabla \widehat\phi|^2
\left( \frac{|\nabla\Rm|^2+|\overline\nabla\Rm|^2}{(C-S)^\frac12}+
\frac{|\Rm|^2(|\nabla S|^2+|\overline\nabla S|^2)}{(C-S)^\frac52} \right) +
\notag \\
& \: \: \: \: \: \: \frac{100}{\epsilon}
\left( \frac{2 |\Rm|^2}{(C-S)^\frac12} + \frac{|\Rm|^2}{8(C-S)^\frac12} 
\right).
\notag
\end{align}
for any $\epsilon > 0$. From (\ref{A.11}), we can choose
$\epsilon$ so that $\epsilon |\nabla \widehat{\phi}|^2 - \widehat{\phi} \le 0$.
Using (\ref{A.30}), (\ref{A.32}) and (\ref{A.34}), we arrive at 
\begin{align}
\( \frac{\p}{\p t}-\Delta \)(\widehat{\phi}\cdot H) \le \widehat{\phi} \left(- CH^2 + C \right) + C \cdot H \le C |\Rm|^2 + C. 
\end{align}

For large $B < \infty$, equation (\ref{A.10}) now gives
\begin{equation}
 \(\frac{\p}{\p t}-\Delta\)(\widehat{\phi}\cdot H+B\cdot S)\le
-|\Rm|^2+C. 
\end{equation}
Using the parabolic maximum principle as in the proof of
(\ref{A.23}), we conclude that
\begin{equation}
 |\Rm|\le C 
\end{equation}
on $B_0 \left( p, \frac14 \right) \times [0, T]$.
This proves the proposition.
\end{proof}

We now extend the preceding proposition to include higher
derivative curvature bounds.

\begin{proposition} \label{A.38}
Let $(X, p, \omega(\cdot))$ be a pointed
K\"ahler-Ricci flow on a complex $n$-dimensional manifold
$X$, defined on a time interval $[0, T]$, with possibly incomplete 
time slices.  Given $l \ge 1$ and 
$\widetilde{C}_l < \infty$, there is some
$\widehat{C}_l = \widehat{C}_l(\widetilde{C}_l,n) < \infty$ 
with the following property.  If $r > 0$, 
suppose that the time-zero ball $B_0(p, r)$ has compact closure in $X$, 
and at time zero, for all $0 \le k \le l$ we have
\begin{equation}
\left| \nabla^k \Rm \right| \le \frac{\widetilde{C}_l}{r^{k+2}}
\end{equation}
on $B_0(p, r)$.
Further assume that for all $t \in [0,T]$,
\begin{equation} \label{A.40}
\widetilde{C}_l^{-1} \omega(0) \le \omega(t) \le \widetilde{C}_l \omega(0). 
\end{equation}
Then 
\begin{equation}
\left| \nabla^k \Rm \right|(x,t) \le \frac{\widehat{C}_l}{r^{k+2}}
\end{equation} 
for all $k \le l$, $x \in B_0 \left(
p, \frac18 r \right)$ and $t \in [0, T]$.
\end{proposition}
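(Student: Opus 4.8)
The plan is to bootstrap from the $C^0$ curvature bound supplied by Proposition \ref{A.1}, using the initial derivative hypotheses to control behavior at $t=0$ and Shi-type estimates to control positive time, while keeping everything independent of $T$. After rescaling we may assume $r=1$. Since the hypotheses for $l \ge 1$ include the $k=0$ bound $|\Rm| \le \widetilde{C}_l$, the $k=1$ bound $|\nabla \Rm| \le \widetilde{C}_l$, and the biLipschitz condition (\ref{A.40}), I would apply Proposition \ref{A.1} with $C_1 = \widetilde{C}_l$ to obtain a constant $C_2$, independent of $T$, such that $|\Rm| \le C_2$ on $B_0(p, \frac14) \times [0,T]$. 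Because of (\ref{A.40}), time-$t$ balls and time-zero balls in this region are uniformly comparable, so I may pass between them freely at the cost of a factor depending only on $\widetilde{C}_l$.

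First I would treat the region where $t$ is bounded away from zero. For $t \ge 1$, the parabolic cylinder $B_0(p, \frac14) \times [t-1, t]$ lies where $|\Rm| \le C_2$, so Shi's local derivative estimate gives $|\nabla^k \Rm|(x,t) \le \widehat{C}_l$ for $x \in B_0(p, \frac18)$ and all $k \le l$, with $\widehat{C}_l$ depending only on $C_2$, $n$ and $l$. The key point is that this estimate only sees the curvature on a time interval of length one, hence is independent of $T$.

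Next I would treat $t \in [0, \min(1,T)]$, where the interior estimate degenerates through its $t^{-k/2}$ factor. Here I would invoke the local derivative estimate of \cite[Appendix D]{Kleiner-Lott}, which incorporates the initial bounds $|\nabla^k \Rm|(\cdot, 0) \le \widetilde{C}_l$: given $|\Rm| \le C_2$ on $B_0(p, \frac14) \times [0, \min(1,T)]$ together with these initial bounds, it produces $|\nabla^k \Rm| \le \widehat{C}_l$ on $B_0(p, \frac18) \times [0, \min(1,T)]$ for all $k \le l$. As it only involves the interval $[0,1]$, this too is independent of $T$. Combining the two regimes yields the asserted bound on $B_0(p, \frac18) \times [0,T]$.

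Concerning the main obstacle: the whole difficulty is uniformity in $T$ down to $t=0$, and the resolution is the clean split above. Proposition \ref{A.1} removes all $T$-dependence from the $C^0$ curvature bound, the interior estimate is applied only on unit-length time intervals, and the Kleiner--Lott boundary estimate absorbs the $t \to 0$ behavior using the initial derivative hypotheses. Alternatively, one could avoid these references and run an induction on $l$ mimicking the cutoff-and-maximum-principle argument of Proposition \ref{A.1}, applied to $(\frac{\p}{\p t} - \Delta)|\nabla^l \Rm|^2 \le -|\nabla^{l+1}\Rm|^2 + C\sum_{i+j=l}|\nabla^i\Rm|\,|\nabla^j\Rm|\,|\nabla^l\Rm|$; the previously established bounds on lower derivatives feed into the right-hand side, and a weight of the form $H = |\nabla^l\Rm|^2/(C-S)^{1/2}$ together with the initial bound controls the parabolic boundary exactly as in (\ref{A.23}).
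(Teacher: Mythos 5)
Your argument is essentially identical to the paper's: rescale to $r=1$, invoke Proposition \ref{A.1} for a $T$-independent curvature bound on $B_0(p,\frac14)\times[0,T]$, use Shi's local derivative estimate away from $t=0$ (where the $T$-independence comes from looking back only a bounded time interval), and use the local derivative estimate of \cite[Appendix D]{Kleiner-Lott} together with the initial derivative bounds to handle small time. The split point and bookkeeping differ trivially from the paper's ($t=1$ versus $\epsilon<\alpha$), but the proof is the same.
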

\begin{proof}
After rescaling, we can assume that $r=1$.
Proposition \ref{A.1} gives a uniform curvature bound on
$B_0 \left( p, \frac14 \right) \times [0, T]$.
For any $k \ge 1$ and $\epsilon > 0$, Shi's local derivative estimate
implies a bound
\begin{equation}
\left| \nabla^k \Rm \right|(x,t) \le \widehat{C}^{\prime \prime}_k
\end{equation}
on $B_0 \left(
p, \frac18 \right) \times [\epsilon, T]$, where
$\widehat{C}^{\prime \prime}_k$ depends on $n$, $\epsilon$ and
the parameter $C_2$ in the conclusion of Proposition \ref{A.1}.
Given $\alpha > 0$, the local derivative estimate in
\cite[Appendix D]{Kleiner-Lott} gives a bound
\begin{equation}
\left| \nabla^k \Rm \right|(x,t) \le \widehat{C}^{\prime}_l
\end{equation}
on $B_0 \left(
p, \frac18 \right) \times [0, \alpha]$, where $1 \le k \le l$ and
$\widehat{C}^{\prime}_l$ depends on $n$, $\alpha$, $C_2$ and
$\widetilde{C}_l$. Taking $\epsilon < \alpha$, the proposition follows.
\end{proof}

\section{Power law decay of curvature and Ricci flow} \label{powerapp}

In this section we give sufficient conditions for Ricci flow to
preserve a power law decay of curvature.  In Subsection \ref{appsub1}
we show that this is true under a technical condition, which will
be satisfied in the cases of interest. 
The proof is along the lines of Dai-Ma \cite{Dai-Ma}.
In Subsection \ref{appsub2}
we show it is always true for the K\"ahler-Ricci flow.

We remark that Hamilton showed in \cite[Theorem 18.2]{Hamilton3} 
that Ricci flow preserves
the property that the curvature decays to zero at spatial infinity.

\subsection{Power law decay in Ricci flow} \label{appsub1}

Suppose that $(X, x_0, g_X)$ is a complete pointed $n$-dimensional 
Riemannian manifold with $|\Rm| \le k_0$.  
From \cite[Lemma 12.30]{Chowetal2},
there are $C = C(n,k_0) < \infty$ and a function $\phi \in C^\infty(X)$
so that
\begin{align}
C^{-1} (d(x, x_0) + 1) \le & \phi(x) \le C (d(x, x_0) + 1), \\
|d \phi|_{g_X} & \le C, \notag \\
\Hess_{g_X}(\phi) & \le C g_X. \notag
\end{align}
Note that there is an upper bound on $\Hess(\phi)$ but generally
not a lower bound.

Now let $(X, g_X(\cdot))$ be a Ricci flow defined on a time interval
$[0, T)$, with complete time slices and bounded curvature on
compact time intervals.
Let $d_t(\cdot, \cdot)$ denote the time-$t$ distance function.
For any $T^\prime \in [0, T)$, the identity map from 
$(X, g_X(0))$ to $(X, g_X(t))$ is uniformly biLipschitz for
$t \in [0, T^\prime]$.

Let $\phi$ be a distance-like function as above, relative to the
time-zero metric $g_X(0)$. From \cite[Lemma 12.5]{Chowetal2},
given $T^\prime \in [0, T)$, there is some $C_{T^\prime} < \infty$
so that for all $t \in [0, T^\prime]$, we have
\begin{align} \label{4.1.5}
C_{T^\prime}^{-1} (d_t(x, x_0) + 1) \le & \phi(x) \le 
C_{T^\prime} (d_t(x, x_0) + 1), \\
|d \phi|_{g_X(t)} & \le C_{T^\prime}, \notag \\
\Hess_{g_X(t)}(\phi) & \le C_{T^\prime} g_X. \notag
\end{align}
In particular, the notion of power law decay is the same
as measured with $\phi$, $d_0$ or $d_t$. For future convenience,
we will only state results in terms of $d_0$.

\begin{lemma}
If there is some $B_0 \in \R$ so that at time zero, we have
\begin{equation} \label{4.3.5}
\phi^{-1} \Hess_{g_x(0)}(\phi) \ge B_0 g_X(0),
\end{equation}
then for
any $T^\prime \in [0, T)$, there is some $B_{T^\prime} \in \R$
so that for all $t \in [0, T^\prime]$, 
\begin{equation}
\phi^{-1} \Hess_{g_X(t)}(\phi) \ge B_{T^\prime} g_X(t).
\end{equation}
\end{lemma}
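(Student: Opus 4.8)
The plan is to use the fact that $\phi$ is \emph{time-independent}, so that the Hessian of $\phi$ with respect to the flowing metric evolves by a pure reaction equation, with no diffusion term. In fixed local coordinates write $P_{ij}(t) = (\Hess_{g_X(t)}\phi)_{ij} = \nabla_i \nabla_j \phi = \partial_i \partial_j \phi - \Gamma^k_{ij} \partial_k \phi$. Since $\phi$ does not depend on $t$, only the Christoffel symbols do, and using $\partial_t \Gamma^k_{ij} = - g^{kl}(\nabla_i R_{jl} + \nabla_j R_{il} - \nabla_l R_{ij})$ under the Ricci flow one obtains
\begin{equation}
\partial_t P_{ij} = g^{kl}\left( \nabla_i R_{jl} + \nabla_j R_{il} - \nabla_l R_{ij} \right) \nabla_k \phi .
\end{equation}
There is no Laplacian on the right-hand side, so for each fixed point and each fixed tangent vector $v$ (with $t$-independent components) the scalar $P_{ij}(t) v^i v^j$ satisfies an ordinary differential equation in $t$ that can be integrated directly. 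This is why the non-compactness of $X$ causes no difficulty: every estimate is pointwise and uniform in space, and no maximum principle is needed.

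Next I would bound the forcing term. Its $g_X(t)$-norm is at most $C(n)\,|\nabla \Ric|_{g_X(t)}\,|\nabla \phi|_{g_X(t)}$. By (\ref{4.1.5}) we have $|\nabla \phi|_{g_X(t)} \le C_{T^\prime}$ on $[0, T^\prime]$, and since the flow has bounded curvature on $[0,T^\prime]$, Shi's global derivative estimate gives $|\nabla \Rm|_{g_X(t)} \le C_{T^\prime}\, t^{-1/2}$, hence $|\nabla \Ric|_{g_X(t)} \le C_{T^\prime}\, t^{-1/2}$ as well. Therefore $|\partial_t P|_{g_X(t)} \le C_{T^\prime}\, t^{-1/2}$, a bound whose only singularity, at $t = 0$, is integrable.

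I would then integrate. For fixed $v$,
\begin{equation}
\partial_t \left( P_{ij} v^i v^j \right) \ge - |\partial_t P|_{g_X(t)}\, |v|^2_{g_X(t)} \ge - C_{T^\prime}\, t^{-1/2}\, |v|^2_{g_X(t)},
\end{equation}
and the uniform biLipschitz equivalence of $g_X(t)$ with $g_X(0)$ on $[0,T^\prime]$ converts the right-hand side into $- C^\prime_{T^\prime}\, t^{-1/2}\, |v|^2_{g_X(0)}$. Integrating from $0$ to $t \le T^\prime$ and inserting the hypothesis (\ref{4.3.5}) yields, as quadratic forms,
\begin{equation}
\Hess_{g_X(t)} \phi \ge \Hess_{g_X(0)} \phi - C^{\prime\prime} \sqrt{T^\prime}\, g_X(0) \ge \left( B_0 \phi - C^{\prime\prime}\sqrt{T^\prime} \right) g_X(0).
\end{equation}
Finally, using $C_{T^\prime}^{-1} g_X(t) \le g_X(0) \le C_{T^\prime} g_X(t)$ together with the uniform positive lower bound $\phi \ge c_0 > 0$ (from $\phi \ge C^{-1}(d_0(\cdot, x_0) + 1)$), I would write $- C^{\prime\prime}\sqrt{T^\prime} \ge - (C^{\prime\prime}\sqrt{T^\prime}/c_0)\,\phi$ to absorb the additive constant into a multiple of $\phi$, and convert back to a bound in terms of $g_X(t)$. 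This produces some $B_{T^\prime} \in \R$ with $\phi^{-1} \Hess_{g_X(t)} \phi \ge B_{T^\prime} g_X(t)$ on $[0, T^\prime]$, as required.

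The only genuinely delicate point is the behavior of the forcing term as $t \to 0^+$: the bound on $|\nabla \Ric|$ degenerates like $t^{-1/2}$, so the whole argument rests on this singularity being integrable. This is precisely what Shi's estimate supplies, and it is the reason that no bound on $\nabla \Rm$ at time zero needs to be assumed in the hypotheses. The remaining steps—keeping track of the two metrics and absorbing the additive constant into the form $B_{T^\prime}\phi\, g_X(t)$ using $\phi \ge c_0 > 0$—are routine bookkeeping.
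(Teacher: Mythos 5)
Your argument is correct and is essentially the same as the paper's: the paper simply cites \cite[Part (iii) of the proof of Lemma 12.5]{Chowetal2} for the estimate $\Hess_{g_X(t)}(\phi) - \Hess_{g_X(0)}(\phi) \ge - C_{T^\prime}\, g_X(0)$, and the cited argument is precisely the one you write out in full (time-independence of $\phi$ reduces the change in the Hessian to the change in the Christoffel symbols, controlled by the integrable Shi bound $|\nabla \Rm| = O(t^{-1/2})$). The concluding bookkeeping with $\phi \ge c_0 > 0$ and the biLipschitz bounds matches the paper's ``The lemma follows.''
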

\begin{proof}
By the same argument as in \cite[Part (iii) of proof of
Lemma 12.5]{Chowetal2}, 
there is some $C_{T^\prime} < \infty$ so that
for all $t \in [0, T^\prime]$,
\begin{equation}
\Hess_{g_X(t)}(\phi) - \Hess_{g_X(0)}(\phi) \ge -
C_{T^\prime} \cdot g_X(0).
\end{equation}
The lemma follows.
\end{proof}

\begin{proposition} \label{4.1}  (c.f. \cite[Theorem 4]{Dai-Ma})
Let $(X, g(\cdot))$ be a Ricci flow solution on a connected manifold
$X$, defined for $t \in [0, T]$, with uniformly bounded curvature and 
complete time slices.
Let $F(x,t)$ and $u(x,t)$ be smooth bounded functions, with $u$
nonnegative. 
Given $C < \infty$, suppose that
\begin{equation} \label{4.2}
\left( \frac{\partial}{\partial t} - \triangle_{g(t)} \right) u \le 
F \sqrt{u} + Cu. 
\end{equation}
Let $x_0 \in X$ be a basepoint.
For some $\beta > 0$, 
suppose that as $x \rightarrow \infty$, $F(x,t) = 
O \left( d_0(x,x_0)^{-\frac{\beta}{2}} \right)$, uniformly in $t$.
Suppose that  $u(x,0) = 
O \left( d_0(x,x_0)^{- \beta} \right)$.  
Suppose that the time-zero distance-like function
$\phi$ can be chosen to satisfy (\ref{4.3.5}) for some $B_0 \in \R$.
Then we have $u(x,t)=O \left( d_0(x,x_0)^{- \beta} \right)$ 
uniformly in $t \in [0, T]$.
\end{proposition}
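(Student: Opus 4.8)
The plan is to prove the pointwise bound $u(x,t) \le A\,\phi(x)^{-\beta}$ for a suitable constant $A$, using the distance-like function $\phi$ to build a barrier and then comparing via the maximum principle. Since $\phi$ is comparable to $d_0(\cdot,x_0)+1$ (and bounded below by a positive constant) uniformly on $[0,T]$ by (\ref{4.1.5}), this bound is equivalent to the asserted decay. The natural barrier is a multiple of $\phi^{-\beta}$, and the argument hinges on controlling $\left(\frac{\partial}{\partial t}-\triangle_{g(t)}\right)\phi^{-\beta}$ from below. A direct computation gives
\begin{equation}
\left(\frac{\partial}{\partial t}-\triangle_{g(t)}\right)\phi^{-\beta}
= \beta\,\phi^{-\beta-1}\,\triangle_{g(t)}\phi
- \beta(\beta+1)\,\phi^{-\beta-2}\,|\nabla\phi|_{g(t)}^2 .
\end{equation}
This is exactly where hypothesis (\ref{4.3.5}) enters: together with the preceding Lemma it yields $\triangle_{g(t)}\phi \ge n B_{T}\,\phi$ on $[0,T]$, so the first term is bounded below by $n\beta B_{T}\,\phi^{-\beta}$, while (\ref{4.1.5}) bounds $|\nabla\phi|_{g(t)}^2$ and forces the second term to be of lower order. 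Hence there are $C_0<\infty$ and $R_0<\infty$ with $\left(\frac{\partial}{\partial t}-\triangle_{g(t)}\right)\phi^{-\beta}\ge -C_0\,\phi^{-\beta}$ on $\{\phi\ge R_0\}\times[0,T]$. Without a lower Hessian bound on $\phi$ there is no reason for $\phi^{-\beta}$ to be a usable barrier, which is precisely why (\ref{4.3.5}) is assumed.

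Next I would dispose of the two terms on the right of (\ref{4.2}). To absorb the linear term I set $\widehat u = e^{-Lt}u$ with $L>C$, which replaces $+Cu$ by $-(L-C)\widehat u$. The cross term is handled by Young's inequality: writing $|F|\le C_F\,\phi^{-\beta/2}$, one has $|F|\sqrt{\widehat u}\le C_F\,\sqrt{\phi^{-\beta}\widehat u}\le C_2\,\phi^{-\beta}+\tfrac{1}{2}(L-C)\,\widehat u$, which uses up half of the good linear term and leaves the clean inequality
\begin{equation}
\left(\frac{\partial}{\partial t}-\triangle_{g(t)}\right)\widehat u \le C_2\,\phi^{-\beta}-\nu\,\widehat u,
\qquad \nu=\tfrac{1}{2}(L-C),
\end{equation}
where $\nu$ can be made as large as desired by taking $L$ large and $C_2$ depends only on $C_F,C,L$. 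Since $\widehat u$ and $u$ differ by the bounded factor $e^{Lt}$ on $[0,T]$, it suffices to bound $\widehat u$.

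The genuine difficulty is the noncompactness of $X$: a priori $\widehat u$ is only bounded, not decaying, so $\widehat u - A\phi^{-\beta}$ need not be nonpositive at spatial infinity and the maximum principle cannot be applied naively. My fix is to add a confining term and compare $\widehat u$ with $v = A\,\phi^{-\beta} + \sigma\, e^{\mu t}\phi^2$ for small $\sigma>0$. Using the upper Hessian bound from (\ref{4.1.5}) one checks that $e^{\mu t}\phi^2$ is a supersolution of the heat equation on $\{\phi\ge R_0\}$ once $\mu$ is large; and, crucially, since $\widehat u$ is globally bounded by some $M$, the inequality $\widehat u>v$ can hold only where $\sigma\phi^2\le M$, i.e.\ in a region with $\phi$ bounded in terms of $\sigma$. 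Thus any positive maximum of $\widehat u - v$ is attained in a compact set (recall that $\{\phi\le R\}$ has compact closure because $g_X(0)$ is complete), and one may run the parabolic maximum principle on $\{\phi\ge R_0\}\times[0,T]$.

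To finish, I would fix $A\ge\max(A_0,\,M R_0^{\beta})$, where $u(\cdot,0)\le A_0\phi^{-\beta}$, so that $\widehat u\le v$ on the parabolic boundary $(\{\phi=R_0\}\times[0,T])\cup(\{\phi\ge R_0\}\times\{0\})$. At an interior positive maximum $(x_1,t_1)$ one has $\left(\frac{\partial}{\partial t}-\triangle\right)(\widehat u-v)\ge0$; feeding in the two displayed inequalities together with $\widehat u>A\phi^{-\beta}$ at that point yields $(\nu-C_0)A<C_2$. Choosing $L$ (hence $\nu$) large enough that $\nu>C_0$, and then $A$ large enough that $(\nu-C_0)A\ge C_2$, produces a contradiction, so $\widehat u\le v$ on $\{\phi\ge R_0\}$. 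Letting $\sigma\to0$ gives $\widehat u\le A\phi^{-\beta}$ there, and on the compact region $\{\phi\le R_0\}$ the bound is automatic from the boundedness of $\widehat u$; undoing the substitution gives $u=O\!\left(\phi^{-\beta}\right)=O\!\left(d_0(x,x_0)^{-\beta}\right)$ uniformly on $[0,T]$. The main obstacle throughout is the noncompactness, and the confining quadratic term—admissible precisely because of the one-sided Hessian bounds (\ref{4.1.5}) and (\ref{4.3.5})—is what makes the maximum principle applicable.
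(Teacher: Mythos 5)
Your proof is correct, and it reaches the conclusion by a route that differs in organization from the paper's, though the two arguments share the same analytic core. The paper conjugates in the other direction: it computes the evolution inequality for $w = \phi^\beta u$, observes that the coefficient $-\beta\,\triangle\phi/\phi + \beta(\beta+1)|\nabla\phi|^2/\phi^2$ is bounded above (this is exactly where the lower Hessian bound (\ref{4.3.5}) and the estimates (\ref{4.1.5}) enter, just as in your computation for $\phi^{-\beta}$), that $\phi^{\beta/2}F$ is bounded, and then cites the weak maximum principle on complete noncompact manifolds with bounded curvature \cite[Theorem 12.14]{Chowetal2} to conclude that $w$ is uniformly bounded; the nonlinear term is left in the form $b\sqrt{w}$ and handled by the ODE comparison built into that theorem. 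You instead keep $u$ itself, absorb the $Cu$ term by the conjugation $e^{-Lt}u$, absorb $F\sqrt{u}$ by Young's inequality into the resulting good linear term, and build an explicit supersolution $A\phi^{-\beta} + \sigma e^{\mu t}\phi^2$, with the confining quadratic term doing by hand the work that the cited weak maximum principle does as a black box (namely, forcing any positive maximum of the difference into a compact set so the classical parabolic maximum principle applies). What your version buys is self-containedness and transparency about exactly which one-sided Hessian bounds are used where — the lower bound (\ref{4.3.5}) for the decaying barrier $\phi^{-\beta}$ and the upper bound from (\ref{4.1.5}) for the confining barrier $\phi^2$; what the paper's version buys is brevity, since the single identity for $(\partial_t - \triangle)(\phi^\beta u)$ plus the citation replaces your three-step absorption and limiting argument in $\sigma$. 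Both proofs use identical hypotheses and constants of the same nature, so neither is more general than the other.
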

\begin{proof}
One calculates that
\begin{align} \label{4.3}
& \left( \frac{\partial}{\partial t} - \triangle_{g(t)} \right) 
(\phi^\beta u) \le \\
& \left( - \beta \frac{\triangle \phi}{\phi} + \beta(\beta+1) 
\frac{|\nabla \phi|^2}{\phi^2} \right)  \phi^{\beta} u 
- 2 \beta 
\left\langle \frac{\nabla \phi}{\phi}, \nabla (\phi^\beta u) \right\rangle +
\phi^{\frac{\beta}{2}} F \sqrt{\phi^\beta u} + C\phi^\beta u. \notag
\end{align}
By the weak maximum principle
\cite[Theorem 12.14]{Chowetal2}, $\phi^\beta u$ is 
uniformly bounded above for all $t \in [0,T]$.
This proves the proposition.
\end{proof}

\begin{proposition} \label{4.5}  (c.f. \cite[Theorem 1A]{Dai-Ma})
Let $(X, g(\cdot))$ be a Ricci flow solution on a connected manifold
$X$, defined for $t \in [0, T]$, with uniformly bounded curvature and 
complete time slices.

Let $x_0 \in X$ be a basepoint.
For some $\alpha > 0$, 
suppose that as $x \rightarrow \infty$, 
$|\Rm(x,0)| = 
O \left( d_0(x,x_0)^{- 2\alpha} \right)$.  
Suppose that the time-zero distance-like function
$\phi$ can be chosen to satisfy (\ref{4.3.5}) for some $B_0 \in \R$.
Then we have $|\Rm(x,t)| = O \left( d_0(x,x_0)^{- 2\alpha} \right)$ 
uniformly in $t \in [0, T]$.
\end{proposition}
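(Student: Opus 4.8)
The plan is to apply Proposition \ref{4.1} to the function $u = |\Rm|^2$, taking the decay exponent to be $\beta = 4\alpha$. The hypothesis $|\Rm(x,0)| = O\left( d_0(x,x_0)^{-2\alpha} \right)$ immediately gives $u(x,0) = O\left( d_0(x,x_0)^{-4\alpha} \right) = O\left( d_0(x,x_0)^{-\beta} \right)$, which is the required initial decay. Since $u$ is smooth and, by the assumption of uniformly bounded curvature on $[0,T]$, bounded, it meets the standing hypotheses of Proposition \ref{4.1}. The distance-like function $\phi$ is assumed to satisfy (\ref{4.3.5}), so the only remaining input is the differential inequality (\ref{4.2}) for $u$.

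First I would invoke the standard evolution inequality for the curvature norm under Ricci flow: there is a constant $c_n$, depending only on $n$, so that
\[
\left( \frac{\partial}{\partial t} - \triangle_{g(t)} \right) |\Rm|^2 \le
- 2 |\nabla \Rm|^2 + c_n |\Rm|^3 \le c_n |\Rm|^3 .
\]
The essential use of the hypotheses is the uniform curvature bound $|\Rm| \le k_0$ on $X \times [0,T]$, which lets me convert the cubic reaction term into a term that is linear in $u$: writing $c_n |\Rm|^3 \le c_n k_0 |\Rm|^2 = C u$ with $C = c_n k_0$, I obtain
\[
\left( \frac{\partial}{\partial t} - \triangle_{g(t)} \right) u \le C u .
\]
This is exactly (\ref{4.2}) with $F \equiv 0$, so that the required decay $F = O\left( d_0(x,x_0)^{-\beta/2} \right)$ holds vacuously.

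With every hypothesis of Proposition \ref{4.1} in place, its conclusion yields $u(x,t) = O\left( d_0(x,x_0)^{-\beta} \right) = O\left( d_0(x,x_0)^{-4\alpha} \right)$, uniformly in $t \in [0,T]$; taking square roots gives the claimed bound $|\Rm(x,t)| = O\left( d_0(x,x_0)^{-2\alpha} \right)$. I do not anticipate a serious obstacle. The one point genuinely worth flagging is that the whole argument hinges on the uniform curvature bound used to linearize the cubic term: without it the reaction term would be of size $u^{3/2}$ and would fall outside the template of Proposition \ref{4.1}. The noncompactness of $X$ is handled internally by Proposition \ref{4.1} through the weak maximum principle of \cite[Theorem 12.14]{Chowetal2}, whose applicability is precisely what condition (\ref{4.3.5}) on $\phi$ secures.
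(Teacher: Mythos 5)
Your proof is correct and is essentially identical to the paper's: the paper also applies Proposition \ref{4.1} to $u = |\Rm|^2$ with $F = 0$, using the evolution inequality $\left( \frac{\partial}{\partial t} - \triangle_{g(t)} \right) |\Rm|^2 \le 16 |\Rm| \cdot |\Rm|^2$ from \cite[(6.1)]{Chowetal} and the uniform curvature bound $|\Rm| \le K$ to take $C = 16K$. Your explicit choice $\beta = 4\alpha$ and the flagging of the linearization of the cubic term match the paper's (more terse) argument exactly.
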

\begin{proof}
From
\cite[(6.1)]{Chowetal},
\begin{equation} \label{4.6}
\left( \frac{\partial}{\partial t} - \triangle_{g(t)} \right)
 |\Rm|^2 \le 16 |\Rm|  \cdot  |\Rm|^2.
\end{equation}
By assumption, there is some $K < \infty$ so that
$|\Rm(x,t)|\le K$ for all $(x,t)$.
Put $u(x,t) = |\Rm(x,t)|^2$. 
Applying Proposition \ref{4.1} with $F = 0$ and $C = 16K$, the claim follows.
\end{proof}

\begin{proposition} \label{4.7}
Under the hypotheses of Proposition \ref{4.5}, suppose that
$|\nabla^k \Rm|(x,0) = O \left(
d_0(x,x_0)^{-(k+2) \alpha} \right)$, for all
$0 \le k \le l$. Then for all $0 \le k \le l$ and $t$, we have
$|\nabla^k \Rm|(x,t) = O \left (d_0(x,x_0)^{-(k+2) \alpha} \right)$,
uniformly in $t \in [0, T]$.
\end{proposition}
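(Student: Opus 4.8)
The plan is to prove the statement by induction on $k$, the base case $k=0$ being exactly Proposition \ref{4.5}. I would fix $1 \le k \le l$, assume that $|\nabla^j \Rm|(x,t) = O\left(d_0(x,x_0)^{-(j+2)\alpha}\right)$ uniformly in $t \in [0,T]$ for every $j < k$, and deduce the same bound for $j=k$. The key idea is to set $u = |\nabla^k \Rm|^2$, take $\beta = 2(k+2)\alpha$, and arrange the evolution of $u$ into exactly the form treated by Proposition \ref{4.1}.

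The starting point is the standard Shi-type evolution inequality (c.f. \cite{Chowetal})
\[
\left( \frac{\partial}{\partial t} - \triangle_{g(t)} \right) |\nabla^k \Rm|^2 \le - 2 |\nabla^{k+1} \Rm|^2 + C \sum_{i+j=k} |\nabla^i \Rm| \cdot |\nabla^j \Rm| \cdot |\nabla^k \Rm|.
\]
Discarding the favorable term $-2|\nabla^{k+1}\Rm|^2$, I would split the reaction sum. Each term with $i=0$ or $j=0$ carries a factor $|\Rm|$, which is uniformly bounded by some $K$, together with $|\nabla^k \Rm|^2 = u$, so these contribute a term of the form $Cu$. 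Each remaining term has $1 \le i, j \le k-1$ and carries one factor $|\nabla^k \Rm| = \sqrt{u}$; by the inductive hypothesis the product of the other two factors satisfies $|\nabla^i \Rm| \cdot |\nabla^j \Rm| = O\left(d_0(x,x_0)^{-(k+4)\alpha}\right)$, which is $O\left(d_0(x,x_0)^{-\frac{\beta}{2}}\right)$ uniformly in $t$ since $\alpha > 0$. Setting $F$ equal to $C$ times the sum of these interior products then yields $\left(\frac{\partial}{\partial t} - \triangle_{g(t)}\right) u \le F \sqrt{u} + C u$ with $F$ bounded and $F = O\left(d_0(x,x_0)^{-\frac{\beta}{2}}\right)$. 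Since $u(x,0) = O\left(d_0(x,x_0)^{-\beta}\right)$ and $\phi$ satisfies (\ref{4.3.5}) by hypothesis, Proposition \ref{4.1} gives $u(x,t) = O\left(d_0(x,x_0)^{-\beta}\right)$, i.e. $|\nabla^k \Rm|(x,t) = O\left(d_0(x,x_0)^{-(k+2)\alpha}\right)$, completing the induction. (When $k=1$ the interior sum is empty, so $F = 0$ and one is back in the situation of Proposition \ref{4.5}.)

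I expect the genuine step requiring care to be the verification that $u$ and $F$ are smooth bounded functions, as demanded by Proposition \ref{4.1}. This I would obtain from the uniform curvature bound on $[0,T]$ together with the time-zero bounds on $\nabla^j \Rm$ for $j \le l$: Shi's local derivative estimates control $|\nabla^k \Rm|$ for positive times, while the local derivative estimate of \cite[Appendix D]{Kleiner-Lott}, already used elsewhere in this paper, propagates the bound down to $t = 0$. Once boundedness is established, the remainder is bookkeeping on the decay exponents, so I do not anticipate further difficulty.
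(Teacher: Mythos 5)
Your proposal is correct and follows essentially the same route as the paper: induction on $k$, setting $u = |\nabla^k \Rm|^2$, invoking the Shi-type evolution inequality (the paper cites \cite[(6.24)]{Chowetal}, which already separates the interior sum from the $|\Rm|\,u$ term), bounding the interior products by $O\left(d_0(x,x_0)^{-(k+4)\alpha}\right)$ via the inductive hypothesis, and concluding with Proposition \ref{4.1}. Your extra care about the boundedness of $u$ and $F$ near $t=0$ (via Shi plus the local derivative estimate of \cite[Appendix D]{Kleiner-Lott}) is a reasonable elaboration of a point the paper leaves implicit, but it does not change the argument.
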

\begin{proof}
Given $1 \le k \le l$,
put $u(x,t) = |\nabla^k \Rm(x,t)|^2$.

By assumption, $u(x,0) = O \left( 
d_0(x,x_0)^{- 2(k+2) \alpha} \right)$. 
From
\cite[(6.24)]{Chowetal},
\begin{equation} \label{4.8}
\left( \frac{\partial}{\partial t} - \triangle_{g(t)} \right)
u \le c(n) \sum_{l=1}^{k-1} |\nabla^l \Rm| \cdot |\nabla^{k-l} \Rm| \sqrt{u} +
c(n) |\Rm| u.
\end{equation}
By induction, we can assume that 
\begin{equation} \label{4.9}
|\nabla^l \Rm| \cdot |\nabla^{k-l} \Rm| = O \left( 
d_0(x,x_0)^{- (k+4) \alpha}
\right),
\end{equation}
uniformly in $t \in [0, T]$.
The claim now follows from Proposition \ref{4.1}.
\end{proof}

\subsection{Power law decay in K\"ahler-Ricci flow} \label{appsub2}

\begin{proposition} \label{B.15}
Let $X$ be a complex manifold.  Let $(X, g(\cdot))$ be a
K\"ahler-Ricci flow on $X$, defined for $t \in [0, T)$,
with complete time slices and
bounded curvature on compact time intervals. Let $x_0 \in X$
be a basepoint. For some $\alpha \in (0,1]$ and $l \ge 1$, suppose
that $|\nabla^k \Rm|(x,0) = O \left(
d_0(x,x_0)^{-(k+2) \alpha} \right)$, for all
$0 \le k \le l$. Then for all $0 \le k \le l$ and $t$, we have
$|\nabla^k \Rm|(x,t) = O \left (d_0(x,x_0)^{-(k+2) \alpha} \right)$,
uniformly in $t \in [0, T]$.
\end{proposition}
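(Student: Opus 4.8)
The plan is to reduce everything to the local, time-independent curvature estimates of Appendix \ref{localapp}, thereby bypassing the Hessian lower bound (\ref{4.3.5}) that was needed in the general Ricci flow argument of Subsection \ref{appsub1}. The whole point of the K\"ahler setting is that Proposition \ref{A.38} converts a biLipschitz bound into bounds on the curvature and its derivatives \emph{with a constant independent of the length of the time interval}, and that is exactly the feature that lets us drop the technical condition.

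First I would fix $T^\prime \in [0, T)$ and prove the estimate uniformly on $[0, T^\prime]$, allowing the $O(\cdot)$-constant to depend on $T^\prime$. By the hypothesis of bounded curvature on compact time intervals, there is $K < \infty$ with $|\Rm| \le K$ on $X \times [0, T^\prime]$. Integrating the flow equation $\partial_t \omega = - \Ric(\omega)$ along fixed tangent vectors then yields a \emph{global} biLipschitz bound $\Lambda^{-1} \omega(0) \le \omega(t) \le \Lambda \omega(0)$ for all $t \in [0, T^\prime]$, with $\Lambda = \Lambda(K, T^\prime)$ independent of the point of $X$.

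Next I would localize at spatial infinity. Given $x$ with $\rho := d_0(x, x_0)$ large, set $r = c\,\rho^\alpha$ for a small fixed $c > 0$; the closed ball $\overline{B_0(x,r)}$ is compact since $g(0)$ is complete. Because $\alpha \in (0,1]$ and $\rho$ is large, $r$ is small relative to $\rho$, so $B_0(x, r)$ lies in the annulus where $d_0(\cdot, x_0)$ is comparable to $\rho$; this is precisely where the hypothesis $\alpha \le 1$ enters. The initial decay assumption then gives, for all $y \in B_0(x, r)$ and $0 \le k \le l$,
\begin{equation}
|\nabla^k \Rm|(y, 0) \le C^\prime \rho^{-(k+2)\alpha} \le C^{\prime \prime} \, r^{-(k+2)},
\end{equation}
with $C^{\prime \prime}$ independent of $x$. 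Taking $\widetilde{C}_l = \max(C^{\prime \prime}, \Lambda)$, both the initial curvature hypotheses and the biLipschitz hypothesis (\ref{A.40}) of Proposition \ref{A.38} hold on $B_0(x, r) \times [0, T^\prime]$ with this single constant.

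Finally, Proposition \ref{A.38} yields $|\nabla^k \Rm|(y, t) \le \widehat{C}_l\, r^{-(k+2)}$ on $B_0(x, r/8) \times [0, T^\prime]$, where $\widehat{C}_l = \widehat{C}_l(\widetilde{C}_l, n)$ is independent of $x$ since $\widetilde{C}_l$ is. Evaluating at the center $y = x$ and substituting $r = c\rho^\alpha$ gives $|\nabla^k \Rm|(x, t) \le \widehat{C}_l\, c^{-(k+2)} d_0(x,x_0)^{-(k+2)\alpha}$ uniformly in $t \in [0, T^\prime]$, which is the assertion. The step to watch is the uniformity of constants in $x$: it holds because Proposition \ref{A.38} is scale-invariant and time-independent, so the only $x$-dependent input is the rescaled radius $r$, and the initial decay hypothesis was arranged exactly so that the scale-invariant hypotheses are met with one constant $\widetilde{C}_l$ for all large $\rho$.
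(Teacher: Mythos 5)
Your proof is correct and follows essentially the same route as the paper's: localize at a point $x$ far from $x_0$ on a ball of radius $r \sim d_0(x,x_0)^\alpha$, check that the scale-invariant hypotheses of Proposition \ref{A.38} hold there with a constant uniform in $x$ (the biLipschitz hypothesis coming from the bounded-curvature assumption on the compact time interval), and read off the decay at the center. The paper's proof is just a terser version of exactly this argument, with $r = \tfrac12 d_0(x,x_0)^\alpha$ in place of your $c\,d_0(x,x_0)^\alpha$.
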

\begin{proof}
Given $p \in X - B_0(x_0, 1)$, put $r = \frac12 d(p,x_0)^\alpha$. 
By assumption, there is some $\widetilde{C}_l < \infty$ so that
the hypotheses of Proposition \ref{A.38} are satisfied for all such $p$.
(Hypothesis \ref{A.40} is satisfied because the bounded curvature
assumption implies biLipschitzness on a finite time interval.)
Then Proposition \ref{A.38}, applied at the center of the ball
$B_0 \left(p, \frac18 r \right)$, implies the proposition.
\end{proof}

\bigskip
\footnotesize
\noindent\textit{Acknowledgments.}

We thank Albert Chau, Xianzhe Dai and Hans-Joachim Hein for
helpful discussions.
We also thank Hans-Joachim for pointing out
a mistake in an earlier version of the paper.

The research of the first author was partially supported
by NSF grant DMS-1207654 and a Simons Fellowship.
The research of the second author was partially supported
by ARC Discovery Project DP110102654.

\end{document}